\tikzset{unseulcoin fill/.style={append after command={
   \pgfextra
        \draw[sharp corners, fill=#1, color=#1, line width = 0mm]%
    (\tikzlastnode.west)%
    [rounded corners=0pt] |- (\tikzlastnode.north)%
    [rounded corners=0pt] -| (\tikzlastnode.east)%
    [rounded corners=0pt] |- (\tikzlastnode.south)%
    [rounded corners=5pt] -| (\tikzlastnode.west);
   \endpgfextra}}}
\tikzset{black node/.style = {draw=black,fill=black,circle, inner sep=0, minimum size=0.15cm}}
\crefname{rule}{}{}
\newtheorem{theorem}{Theorem}[section]
\newtheorem{corollary}[theorem]{Corollary}
\newtheorem{proposition}[theorem]{Proposition}
\newtheorem{conjecture}[theorem]{Conjecture}
\newtheorem{claim}[theorem]{Claim}
\newtheorem{question}{Question}
\def\cqedsymbol{\ifmmode$\lrcorner$\else{\unskip\nobreak\hfil
\penalty50\hskip1em\null\nobreak\hfil$\lrcorner$
\parfillskip=0pt\finalhyphendemerits=0\endgraf}\fi}
\newcommand*\sg[1]{\{ #1 \}}
\newcommand{\scop}{s_{c}}
\newcommand{\sr}{s_{r}}
\DeclareMathOperator{\tw}{\mathrm{tw}}
\DeclareMathOperator{\wco}{{\sf wCop}}
\DeclareMathOperator{\sco}{{\sf sCop}}
\DeclareMathOperator{\wdi}{{\sf wDiv}}
\DeclareMathOperator{\sdi}{{\sf sDiv}}
\newenvironment{proofofclaim}{%
  \proof}{\endproof}
\let\le\leqslant
\let\ge\geqslant
\let\leq\leqslant
\let\geq\geqslant
\title[Coarse cops and robber in graphs and groups]{Coarse cops and robber in graphs and groups}
\author[L.~Esperet]{Louis Esperet}
\address[L.~Esperet]{Univ.\ Grenoble Alpes, CNRS, Laboratoire G-SCOP,
  Grenoble, France}
\email{louis.esperet@grenoble-inp.fr}
\author[H.~Gahlawat]{Harmender Gahlawat}
\address[H.~Gahlawat]{LIMOS, Université Clermont Auvergne, Clermont-Ferrand, France}
\email{harmendergahlawat@gmail.com}
\author[U.~Giocanti]{Ugo Giocanti}
\address[U.~Giocanti]{Faculty of Mathematics
and Computer Science, Jagiellonian University, Krak\'ow, Poland}
\email{ugo.giocanti@uj.edu.pl}
\date{\today}
\thanks{The authors are partially supported by the French ANR Projects TWIN-WIDTH
  (ANR-21-CE48-0014-01), and by LabEx
  PERSYVAL-lab (ANR-11-LABX-0025). The third author is supported by the National Science Center of Poland
under grant 2022/47/B/ST6/02837 within the OPUS 24 program}
\begin{document}

\maketitle
\begin{abstract}
We investigate two variants of the classical Cops and robber game in graphs, recently introduced by Lee,  Mart\'inez-Pedroza, and Rodr\'iguez-Quinche. The two versions are played in infinite graphs and the goal of the cops is to prevent the robber to visit some ball of finite radius (chosen by the robber) infinitely many times. Moreover the cops and the robber move at a different speed, and the cops can choose a radius of capture before the game starts. Depending on the order in which the parameters are chosen, this naturally defines two games, a weak version and a strong version (in which the cops are more powerful), and thus two variants of the cop number of  a graph $G$: the weak cop number $\wco(G)$ and the strong cop number $\sco(G)$. It turns out that these two parameters are invariant under quasi-isometry and thus we can investigate these parameters in finitely generated groups by considering any of their Cayley graphs; the parameters do not depend on the chosen set of generators.

We answer a number of questions raised by Lee,  Mart\'inez-Pedroza, and Rodr\'iguez-Quinche, and more recently by Cornect and Mart\'inez-Pedroza. 
\begin{itemize}
    \item We show that if some graph $G$ has a quasi-isometric embedding in some graph $H$, then $\wco(G)\le \wco(H)$ and $\sco(G)\le \sco(H)$.
    \item It was proved by Lee,  Mart\'inez-Pedroza, and Rodr\'iguez-Quinche that Gromov-hyperbolic graphs have strong cop number equal to 1. We prove that the converse also holds, so that $\sco(G)=1$ if and only if $G$ is Gromov-hyperbolic. This gives a new purely game-theoretic definition of hyperbolicity in infinite graphs.
    \item We tie the weak cop number of a graph $G$ to the existence of asymptotic minors of large treewidth in $G$. We deduce that for any graph $G$, $\wco(G)=1$ if and only if $G$ is quasi-isometric to a tree. In particular, for any finitely generated group $\Gamma$, $\wco(\Gamma)=1$ if and only if $\Gamma$ is virtually free. We also prove that for any finitely presented group $\Gamma$, $\wco(\Gamma)=1 \text{ or }\infty$.
    \item We prove that $\sco(\mathbb{Z}^2)=\infty$ (this was only known to hold for the weak version of the game).
\end{itemize}

\medskip

We have learned very recently that some of our results have been obtained independently by Appenzeller and Klinge, using fairly different arguments.
\end{abstract}

\section{Introduction}\label{sec:intro}
 
We consider two recent variants of the classical cops and robber game in graphs originally introduced by Quilliot~\cite{qui}, and  Nowakowski and Winkler~\cite{nowakowski}. At a high level, the setting of the new versions of the cops and robber game introduced by Lee,  Mart\'inez-Pedroza, and Rodr\'iguez-Quinche \cite{lee2023coarse} is the following: the games are played in some infinite graph $G$; the cops and a robber move in the graph $G$ in alternating rounds, with the cops moving at speed $s_c$ and the robber moving at speed $s_r$ (meaning that in a single round, each cop can move from a vertex $u$ to a vertex $v$ if the distance $d_G(u,v)$ between $u$ and $v$ in $G$ is at most $s_c$, and similarly for the robber). The cops have some radius of capture $\rho$, which means that if at any point of the game a cop is within distance $\rho$ from the robber, then the robber is captured and the game stops. The goal of the cops is to make sure that after finitely many moves, the robber never enters again in some fixed ball of radius $R$ (chosen by the robber). All parameters $s_c, s_r, \rho, R$ are finite. 

There are two natural variants of the game depending in which order the parameters are chosen by the players:
\begin{itemize}
\item the strong game: $s_c, s_r, \rho, R$, and
\item the weak game: $s_c, \rho, s_r, R$.
\end{itemize}

Since the robber can choose his own speed after the cops have specified their speed and radius of capture in the weak game, the robber is more powerful in the latter game (and thus the cops are less powerful). For each game, the cop number is defined as the infimum number of cops needed to win the game if the parameters are chosen in the corresponding order by the players. For the strong game, this defines the \emph{strong cop number} $\sco(G)$ of a graph $G$ and for the weak game, this defines the \emph{weak cop number} $\wco(G)$ of  $G$. Note that by definition we have $\sco(G)\le \wco(G)$ for any graph $G$.

\subsection*{Quasi-isometry} A crucial property shared by these two parameters is that they are invariant under quasi-isometry, as proved in \cite{lee2023coarse}. As all Cayley graphs of a given finitely generated group $\Gamma$ are quasi-isometric, we can define the strong cop number $\sco(\Gamma)$ and the  weak cop number $\wco(\Gamma)$ of $\Gamma$ as the strong and weak cop numbers of any of their Cayley graphs (this is well defined). The way the invariance under quasi-isometry is proved in \cite{lee2023coarse} uses the notion of \emph{quasi-retract}. More precisely, it was shown that if $G$ and $H$ are connected graphs and $G$ is a quasi-retraction of $H$ then $\wco(G)\le \wco(H)$ and $\sco(G)\le \sco(H)$. If two connected graphs are quasi-isometric, there exist quasi-retractions from the first to the second and vice-versa, and it thus follows that the strong and weak cop numbers of connected graphs are invariant under quasi-isometry. A natural question, raised in \cite{Cornect_Martinez24}, asks whether this can be proved more directly through the notion of quasi-isometric embedding. In other words, are the strong and weak cop numbers monotone under quasi-isometric embeddings? Our first result is a positive answer to this question.

\begin{theorem}\label{thm:qi-intro}
If $G$ and $H$ are two connected graphs and $G$ has a quasi-isometric embedding into $H$, then $\wco(G)\le \wco(H)$ and $\sco(G)\le \sco(H)$.
\end{theorem}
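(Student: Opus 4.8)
The plan is to simulate a winning cop strategy in $H$ to obtain one in $G$. Let $f\colon G\to H$ be a quasi-isometric embedding, so there are constants $\lambda\ge 1$, $C\ge 0$ such that $\tfrac1\lambda d_G(u,v)-C\le d_H(f(u),f(v))\le \lambda d_G(u,v)+C$ for all $u,v\in V(G)$. Unlike a quasi-isometry, $f$ need not be quasi-surjective, so we cannot assume a coarse inverse defined on all of $H$; this is precisely what makes the statement more delicate than the quasi-retract argument of \cite{lee2023coarse} and is where the main work lies. I would first record that, since we only need to track the robber while he stays near $f(V(G))$, it suffices to define a ``pullback'' map $g$ from the $D$-neighborhood $N_D(f(V(G)))$ in $H$ back to $V(G)$, for $D$ a constant depending on $\lambda,C$ and the speeds: for $x\in N_D(f(V(G)))$, set $g(x)$ to be some vertex $u\in V(G)$ with $d_H(x,f(u))\le D$. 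This $g$ is coarsely Lipschitz on its domain and satisfies $d_G(g(f(u)),u)\le \lambda(2D+C)$.

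**The simulation.**

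Suppose $\sco(H)=k$ (the weak case is identical with the quantifiers reordered as in the definitions). The cops fix speed $s_c^H$, capture radius $\rho^H$ in $H$ according to the winning strategy. In $G$, the cops announce speed $s_c^G:=\lceil \lambda s_c^H + (\text{bounded term})\rceil$ and capture radius $\rho^G$ chosen large enough to absorb all additive errors below; the robber then reveals $s_r^G$ and $R$. Now the cops imagine a shadow game in $H$: they run the robber's $G$-moves through $f$ to get a shadow robber in $H$ moving at some speed $s_r^H$ (bounded in terms of $s_r^G,\lambda,C$), play the winning $H$-strategy against this shadow, and pull the cop positions back to $G$ via $g$. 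A single $s_c^H$-step of a cop in $H$ pulls back, via $g$, to a jump of bounded $G$-distance, so choosing $s_c^G$ large enough makes these legal cop moves in $G$. The key points to verify are: (i) the shadow robber is a legal robber in $H$ at speed $s_r^H$ — this is immediate from the upper bound in the quasi-isometry inequality; (ii) when the $H$-strategy captures or confines the shadow, the corresponding event happens in $G$ — capture transfers because $d_H(\text{cop},f(\text{robber}))\le\rho^H$ forces $d_G(g(\text{cop}),\text{robber})$ to be bounded via the lower bound in the inequality, so we set $\rho^G$ to be that bound; confinement to a ball $B_H(c,R')$ in $H$ pulls back to confinement to a ball of radius $\lambda(R'+2D)+C$ in $G$, which is finite and hence an admissible ``escape ball'' witnessing the cops' win.

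**The subtlety with the robber leaving $f(V(G))$.**

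There is one genuine gap to close: the shadow robber in $H$ might wander into parts of $H$ far from $f(V(G))$, where the $H$-strategy could, a priori, chase it and leave the domain of $g$. But here $f(V(G))$ is a coarsely connected net in its own $D$-neighborhood, and more importantly the \emph{shadow} robber, being the image of an actual $G$-robber, always lies exactly in $f(V(G))$; so $g$ is only ever applied to cop positions, not robber positions. Thus I would argue that the winning $H$-strategy, when played against a shadow robber constrained to $f(V(G))$, keeps the cops within bounded distance of $f(V(G))$ as well — either because the strategy is ``reasonable'' (cops never benefit from straying arbitrarily far from the robber; one can always project a straying cop back, losing only a bounded additive term absorbed into $\rho^H,s_c^H$), or, failing that, by first replacing the $H$-strategy with a truncated one that clamps cop positions to $N_{D'}(f(V(G)))$ for a suitable $D'$ and checking this truncation is still winning against shadow robbers. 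I expect this clamping argument — showing the truncated $H$-strategy still wins — to be the main obstacle; everything else is bookkeeping with the quasi-isometry constants. Finally, to get the clean inequalities $\wco(G)\le\wco(H)$ and $\sco(G)\le\sco(H)$ rather than a bound with a multiplicative loss, I note that the number of cops is preserved exactly by the simulation: each $H$-cop yields exactly one $G$-cop.
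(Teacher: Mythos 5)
Your simulation runs in the opposite direction from the paper's: you pull a winning \emph{cop} strategy back from $H$ to $G$, whereas the paper pushes a winning \emph{robber} strategy forward from $G$ into $H$. Both directions face the same core difficulty, and it is exactly the one you flag and then leave open: what to do when an $H$-cop is far from $f(V(G))$. This is not bookkeeping; it is essentially the entire content of the theorem (the case where cops stay near the image reduces to the quasi-retract argument already in \cite{lee2023coarse}). Your proposed fix --- clamp cop positions to $N_{D'}(f(V(G)))$, or ``project a straying cop back, losing only a bounded additive term'' --- fails as stated, because nearest-point projection onto the image of a quasi-isometric embedding is not coarsely Lipschitz: the image need not be quasi-convex in $H$, so an $H$-cop can leave the neighbourhood of $f(V(G))$ at one point and re-enter it $T$ steps later at a point whose pullback lies at distance $\Theta(T)$ in $G$ from where it left. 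The simulated $G$-cop would then have to jump by an amount unbounded over the course of the game, so no finite $s_c^G$ suffices; and since the re-entry point depends on the adversary's future moves, the $G$-cop cannot walk there in advance. The paper's proof spends nearly all of its effort on precisely this point: it maintains a virtual position in the image subject to a multiplicative invariant (rule (R2): the virtual cop is within $16C^3$ times the real cop's distance to any of its projection points), plus a two-case ``safe state'' bookkeeping guaranteeing that the virtual cop, moving at a fixed large speed, can always catch up by the time the real cop gets close enough to the image to be dangerous. Your proposal has no analogue of this invariant, and the claim that a truncated $H$-strategy ``is still winning against shadow robbers'' is asserted, not proved.

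A secondary problem specific to your direction concerns the quantifier order in the strong game. For capture to transfer you need the pullback $g$ to be honest at every $H$-cop position within $\rho^H$ of the shadow robber, which forces $D\geq\rho^H$; the speed $s_c^G$ of the simulated $G$-cops then depends on $D$, hence on $\rho^H$. But in the strong game $s_c^G$ must be announced before the robber reveals $s_r^G$, while $\rho^H$ is produced by the $H$-strategy only after $s_r^H$, i.e.\ after $s_r^G$. You would need to decouple $g$ (and hence $s_c^G$) from $\rho^H$ to make the quantifiers come out right for $\sco$; this looks repairable, but it is not addressed in the proposal.
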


As for any two  quasi-isometric graphs, the first has a quasi-isometric embedding into the second and vice-versa, we obtain an alternative proof of the invariance of the strong and weak cop numbers under quasi-isometry (but we note that directly proving this invariance under quasi-isometry is much simpler than proving Theorem \ref{thm:qi-intro}).

\subsection*{Hyperbolicity}

The notion of \emph{hyperbolicity} for metric spaces was introduced by  Gromov 
\cite{Gromov_hyperbolic}. Hyperbolicity is a quasi-isometric invariant, and a crucial property of hyperbolic groups is that they have solvable word problem. Lee,  Mart\'inez-Pedroza, and Rodr\'iguez-Quinche \cite[Theorem A]{lee2023coarse} proved that every hyperbolic graph has strong cop number 1. Our second result is that this is indeed an equivalence.

\begin{theorem}\label{thm:hy}
For any connected graph $G$, $\sco(G)=1$ if and only if $G$ is hyperbolic. In particular, for any   finitely generated group $\Gamma$, $\sco(\Gamma)=1$ if and only if $\Gamma$ is hyperbolic.
\end{theorem}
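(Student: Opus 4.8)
The plan is to prove the nontrivial direction in contrapositive form: \textbf{if $G$ is not hyperbolic, then the robber wins the strong game against a single cop}, so that $\sco(G)\ge 2$. Together with the implication ``$G$ hyperbolic $\Rightarrow\sco(G)=1$'' of Lee, Mart\'inez-Pedroza and Rodr\'iguez-Quinche this gives the statement for connected graphs; the statement for a finitely generated group $\Gamma$ is then immediate, since $\sco(\Gamma)$ is by definition the strong cop number of any of its Cayley graphs and $\Gamma$ is hyperbolic exactly when these are hyperbolic.

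The first step is metric. Non-hyperbolicity is the failure of the thin-triangles condition, but the robber needs a quantitatively better object: for every $\mu$, a \emph{balanced fat triangle} --- a geodesic triangle $\Delta(x,y,z)$ with all three sides of length $\Theta(\mu)$ and a point $p\in[x,y]$ with $d_G(p,[x,z]\cup[y,z])>\mu$; and, after some surgery, one also wants the three sides to be pairwise $\Omega(\mu)$-apart away from a bounded neighbourhood of their common endpoints. To produce these I would use a local-to-global principle for hyperbolicity (thinness of all geodesic triangles up to a bounded scale already forces global hyperbolicity): since $G$ is not hyperbolic, for every $\mu$ there is a geodesic triangle of perimeter $O(\mu)$ that fails to be $\mu$-thin, and trimming it near its vertices and cutting it along any ``pinches'' where two sides approach each other turns it into a balanced, essentially thin-free fat triangle of fatness $\Omega(\mu)$.

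Now the robber's strategy. Given the cop's speed $s_c$, the robber fixes $s_r$ to be a large constant multiple of $s_c$ (the constant depending only on the universal constant hidden in ``perimeter $O(\mu)$''). After the cop announces $\rho$, the robber picks a balanced fat triangle $\Delta(x,y,z)$ of fatness $\mu\gg\rho$, declares its target ball to be $B(x,\rho+1)$, and starts at $x$; all that will matter is that its whole trajectory stays in a bounded neighbourhood of $\Delta$. While the cop stays far, the robber rests at $x$. When the cop comes within $2\rho$ of it, the robber flees away from the cop; since $s_r\gg s_c$ it keeps its lead, and to stop it returning at once to $x$ the cop is forced onto the side along which the robber is fleeing, between the robber and $x$. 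The robber walks out until the cop has been pushed past the fat point of that side; then, instead of turning back (which is now blocked), it continues to the far vertex and returns to $x$ along the other two sides. This return path has length $O(\mu)$ and, by fatness, stays at distance $>\mu-\rho$ from the cop throughout; since $s_r\gg s_c$ and $\mu\gg\rho$, the cop can neither catch up nor reach the return path in time, so the robber safely gets back to $x$ and can start over. If the cop ever abandons pursuit --- say to camp somewhere hoping to block a future return --- the robber simply stays at $x$, or retreats only far enough to reach $B(x,\rho+1)$ and oscillates there. In every case its trajectory is bounded and it is never within $\rho$ of the cop, so it visits $B(x,\rho+1)$ infinitely often and wins; hence $\sco(G)\ge 2$.

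The hard parts are exactly these two: (i) the metric input --- upgrading the mere failure of thin triangles to a \emph{balanced, thin-free} fat triangle, which is what lets the robber use a fat side as a barrier that the cop cannot cross or shortcut around in time --- and (ii) verifying that the excursion above is robust against \emph{all} cop strategies, not just the naive herding one; concretely, that a single cop can never simultaneously threaten the robber and block both of its outgoing directions together with the detour, which comes down to there being only one cop and to the $\Omega(\mu)$-separation of the sides.
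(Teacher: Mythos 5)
Your overall plan (contrapositive; exhibit a fat closed curve at scale much larger than $\rho s_c$; let the robber live on it and use a local geodesic property to argue that one cop cannot block both escape directions) is the right idea, but as written there are two genuine gaps. The first is the metric input. You need, for every $\mu$, a \emph{balanced, thin-free} fat triangle: three sides of length $\Theta(\mu)$, pairwise $\Omega(\mu)$-separated away from the vertices. Non-hyperbolicity plus a sublinear-thinness/local-to-global criterion does give, for every $\mu$, a geodesic triangle of perimeter $O(\mu)$ that is not $\mu$-thin, but upgrading this to your balanced, thin-free object is not a routine trimming: cutting a geodesic triangle at a ``pinch'' where two sides approach each other does not yield a geodesic triangle (the new sides are concatenations of geodesic arcs and connectors, and near the original vertices the two incident sides may be arbitrarily close, so a robber there has no two independent escape routes). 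None of this surgery is carried out, and it is exactly where the difficulty lies. The paper avoids it entirely by invoking a different characterization (Bowditch, Lemma~6.1.2 of \cite{Bowditch_hyperbolic}): if $G$ is not hyperbolic then for every $t$ there is a cycle of length at least $2t$ \emph{all} of whose subpaths of length $t$ are geodesics. This hands you a uniform object with no vertices and no pinches, and the ``cop cannot be within $\rho s+\rho$ of both arms'' argument becomes a two-line triangle-inequality computation on a single geodesic subpath centered at the robber.

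The second gap is in the strategy itself. You declare the target ball to be $B(x,\rho+1)$ but then say ``all that will matter is that its whole trajectory stays in a bounded neighbourhood of $\Delta$''; these are inconsistent, since with that target the robber must actually return within distance $\rho+1$ of $x$ infinitely often, and that is precisely the part you do not verify: ``the cop is forced onto the side along which the robber is fleeing'' is not an argument against an arbitrary cop strategy (the cop can shadow off the triangle, camp near the far vertex, or oscillate). The clean fix, which is what the paper does, is to let the robber choose the radius $R$ large enough that the entire cycle lies inside the protected ball; then merely never leaving the cycle is already a win, and the whole ``excursion around the far vertex and back to $x$'' analysis disappears. Finally, a quantitative slip: a safety threshold of $2\rho$ is not enough, because the cop moves before the robber reacts; since capture occurs when $d(r_{i-1},c_i)\le\rho$, the threshold must exceed $\rho+s_c$ (the paper uses $10\rho s+s$, which also leaves room for the $\rho$-round escape manoeuvre). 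With Bowditch's cycle in place of the fat triangle and the large protected ball in place of $B(x,\rho+1)$, your argument essentially becomes the paper's proof.
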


This seems to be the first game-theoretic characterization of hyperbolicity in infinite graphs or groups. 
Note that \emph{finite} hyperbolic graphs were already proved to admit a characterisation by means of a Cops and robber game in \cite{CCPP14}. In this game, one cop with speed $s_c$ (and reach $0$) must capture a robber, and the robber is allowed to choose any speed $s_r$  once $s_c$ has been chosen by the cop. The following was proved in \cite{CCNV, CCPP14}: 

\begin{enumerate}
    \item every finite $\delta$-hyperbolic graph is cop-win when the speeds $(s_c,s_r)$ are such that $s_r-s_c < 2\delta$, and
    \item every finite graph which is cop-win for some pair of speeds $(s_r, s_c)$ such that $s_c<s_r$ is $\delta$-hyperbolic, with $\delta=O(s_r^2)$.
\end{enumerate}
However, note that if the graph we consider admits an infinite geodesic path, then the robber can always evade the cop by choosing a speed which is greater than the speed of the cop and traveling on this path. For infinite graphs, is was proved in \cite{CCPP14} that hyperbolic graphs can be characterized by a certain dismantlability condition, which \emph{for finite graphs} turns out to characterise exactly the cop-win graphs for this game \cite{CCNV}. Note however that this characterization is structural, and does not provide a game-theoretic definition of hyperbolicity in infinite graphs.

\subsection*{Weak cop number 1} We have seen above that having strong cop number equal to 1 is equivalent to being hyperbolic. A natural problem is to find a similar characterization of graphs or groups of weak cop number equal to 1. Since trees have weak cop number 1 and the weak cop number is invariant under quasi-isometry, it follows that any graph which is quasi-isometric to a tree has weak cop number equal to 1. We prove that the converse also holds. In particular, in the specific case of groups, our result provides a positive answer to Question 1.4 in \cite{Cornect_Martinez24}.

\begin{theorem}\label{thm:wcop1}
For any connected graph $G$, $\wco(G)=1$ if and only if $G$ is quasi-isometric to a tree.  In particular, for any   finitely generated group $\Gamma$, $\wco(\Gamma)=1$ if and only if $\Gamma$ is virtually free.
\end{theorem}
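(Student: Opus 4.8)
The plan is to prove the two implications of the graph statement separately and then to deduce the group statement from a classical characterisation of finitely generated groups quasi-isometric to trees. For the easy direction, that $G$ quasi-isometric to a tree implies $\wco(G)=1$, I would only invoke two known facts: every tree has weak cop number~$1$, and $\wco$ is invariant under quasi-isometry. Both are due to Lee, Mart\'inez-Pedroza and Rodr\'iguez-Quinche \cite{lee2023coarse} (the invariance also follows from \cref{thm:qi-intro}), so $\wco(G)=\wco(T)=1$ whenever $G$ is quasi-isometric to a tree $T$. In particular this settles all finite graphs, which are quasi-isometric to a point.

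For the hard direction, that $\wco(G)=1$ implies $G$ is quasi-isometric to a tree, I would argue the contrapositive: if $G$ is not quasi-isometric to a tree, then $\wco(G)\ge 2$. This can be read off from our characterisation of the weak cop number via asymptotic minors (a graph $G$ with $\wco(G)=1$ has no asymptotic minor of treewidth~$\ge 2$, and the graphs having no such asymptotic minor are, by Manning's bottleneck criterion, exactly those quasi-isometric to a tree), but let me sketch the self-contained argument underlying it. Failure of the bottleneck property means that for every $\Delta$ there is an arbitrarily long ``thick'' cycle in $G$: after contracting possible shortcuts, a cycle $C$ with $|C|\ge\Delta$ whose internal path metric is within a fixed multiplicative constant $\lambda$ of the restriction of $d_G$ to $V(C)$. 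Given a cop speed $s_c$ and radius of capture $\rho$, the robber picks such a cycle $C$ with $|C|$ large compared with $\lambda(s_c+\rho+1)$, declares a speed $s_r$ equal to a suitable multiple of $\lambda(s_c+\rho)$, and then never leaves $C$: on each of his turns he walks along $C$, away from a vertex of $C$ closest to the current cop position, staying roughly $C$-antipodal to it. Two estimates close the argument. While the cop is within bounded distance of $C$, a cop step of length at most $s_c$ moves the nearest vertex by only $O(\lambda(s_c+\rho))\le s_r$ steps along $C$, so the robber always has enough warning to keep his lead; and the robber is in danger only when the cop lies within $\rho$ of $C$, in which case $\lambda$-thickness and the choice of $|C|$ give that the robber is at $G$-distance at least $\tfrac{1}{\lambda}\lfloor|C|/2\rfloor-\rho>\rho$ from the cop. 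Hence the robber is never captured and stays forever inside the finite set $V(C)$, so he revisits some ball of radius~$0$ infinitely often: one cop does not suffice.

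The group statement follows at once from the graph statement and the classical fact that a finitely generated group is quasi-isometric to a tree if and only if it is virtually free; the substantial direction of that equivalence combines Stallings' structure theorem for groups with infinitely many ends with Dunwoody's accessibility theorem (the remaining cases corresponding to finite groups and to groups quasi-isometric to $\mathbb{Z}$). Applied to any Cayley graph of $\Gamma$, this yields $\wco(\Gamma)=1$ if and only if $\Gamma$ is virtually free, answering Question~1.4 of \cite{Cornect_Martinez24}.

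I expect the main obstacle to lie in the hard direction, and specifically in extracting from the purely metric failure of the bottleneck property a cycle that is at once long \emph{and} of bounded distortion and along which a robber of speed depending only on $G$, $s_c$ and $\rho$ can evade a single cop forever: one has to excise shortcuts without destroying the length or the ``fatness'', and to control how fast the nearest-point projection $V(G)\to V(C)$ moves under a single cop step, a short but slightly delicate potential-function bookkeeping. By contrast the easy direction and the group-theoretic reduction are essentially citations, and the robber's evasion is merely the single-cop, single-cycle base case of the familiar principle that a robber survives inside a sufficiently thick, highly connected region.
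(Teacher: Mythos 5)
Your skeleton is the paper's: the easy direction is quasi-isometry invariance plus $\wco(\text{tree})=1$, and the hard direction, \emph{as long as you take the citation route}, is exactly the proof of Corollary~\ref{cor: wcn1} — cite Georgakopoulos--Papasoglu \cite{GP23} for ``quasi-isometric to a tree iff no $K_3$ asymptotic minor'' and apply Theorem~\ref{thm: minor} with $H=K_3$ to get $\wco(G)\ge\tw(K_3)=2$. (Your group-theoretic reduction via Stallings and Dunwoody is a legitimate alternative to the paper's route through finite treewidth, Halin's grid theorem and Karrass--Pietrowski--Solitar.)

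The ``self-contained argument'' you sketch for the hard direction, however, has a genuine gap, exactly where you flag the difficulty. Failure of the bottleneck property does \emph{not} produce arbitrarily long cycles whose internal path metric is within a fixed multiplicative constant $\lambda$ of $d_G$. A concrete counterexample is any Cayley graph $G$ of a closed hyperbolic surface group (or any graph quasi-isometric to $\mathbb{H}^2$): it is not quasi-isometric to a tree, yet a $\delta$-hyperbolic graph contains no long cycles of distortion $\le\lambda$. Indeed, if $C$ is such a cycle of length $L$ with antipodal points $a,a'$, then $d_G(a,a')\ge L/(2\lambda)-1$, and by the standard exponential-divergence lemma each of the two arcs of $C$ joining $a$ to $a'$ passes within $\delta\log_2 L+1$ of the midpoint $m$ of a geodesic $[a,a']$; the resulting points $p_1,p_2$ on opposite arcs satisfy $d_G(p_1,p_2)\le 2\delta\log_2 L+2$ while $d_C(p_1,p_2)\ge L/(2\lambda)-2\delta\log_2 L-4$ (any cycle path between them passes through $a$ or $a'$, each at $G$-distance at least $L/(4\lambda)-\delta\log_2 L-2$ from both $p_i$), which contradicts $\lambda$-distortion once $L$ is large. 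So there is no thick cycle to run around, and the antipodal-evasion strategy — whose safety estimate $\frac1\lambda\lfloor|C|/2\rfloor-\rho>\rho$ relies essentially on the distortion bound — cannot even be set up. What the failure of quasi-isometry to a tree actually gives is arbitrarily \emph{fat} $K_3$-minors: three branch sets pairwise at distance $\ge D$ joined by three pairwise $D$-separated paths, where the paths may be arbitrarily long and distorted. The paper's robber strategy is built for this weaker structure: in the weak game the robber may set his speed equal to the length of a longest path inside the (finite) fat-minor model, so he traverses a whole connecting path in one move; a single cop can guard at most one of the three connecting paths at a time, so from any branch set there is always an unguarded path leading to a branch set at distance more than $s_c+\rho$ from the cop. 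No control on nearest-point projections or on path distortion is needed. You should replace the cycle-chasing sketch by this junction-and-unguarded-path argument, or simply invoke Theorem~\ref{thm: minor} directly.
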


It was observed by Lee,  Mart\'inez-Pedroza, and Rodr\'iguez-Quinche that while they have examples of graphs of weak cop number or strong cop number equal to $n$ for any integer $n$, for groups they only have examples whose weak (or strong) cop number is equal to 1 or $\infty$. They raised the following question \cite[Question K]{lee2023coarse} (see also \cite[Question 1.3]{Cornect_Martinez24}).

\begin{question}\label{q:1infty}
Is it true that for any finitely generated group $\Gamma$, we  have $\wco(\Gamma)=1\text{ or }\infty$, and $\sco(\Gamma)=1\text{ or }\infty$?
\end{question}

More generally, they asked whether for any connected locally finite vertex-transitive graph $G$ we always have $\wco(G)=1\text{ or }\infty$, and $\sco(G)=1\text{ or }\infty$ \cite[Question L]{lee2023coarse}. We make progress on this question by relating the weak cop number of a graph with the presence of so-called \emph{asymptotic minors} of large treewidth.

\begin{theorem}
 \label{thm: minor intro}
If a graph $G$ contains a finite graph $H$ as an asymptotic minor, then
 $\wco(G)\geq \tw(H)$.
\end{theorem}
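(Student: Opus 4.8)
The plan is to show that if the cops can win the weak game with fewer than $\tw(H)$ cops, then $G$ cannot contain $H$ as an asymptotic minor. The starting point is the intuition that in the finite Cops-and-robber game, the cop number is tied to treewidth-like obstructions: a robber can evade $k$ cops by playing in a ``bramble'' or ``tangle'' of high enough order. Here, the asymptotic minor $H$ should provide, after rescaling, the combinatorial structure (the branch sets of $H$ and the paths realizing its edges) that the robber uses as a hiding space. So first I would fix a weak-game strategy for $k < \tw(H)$ cops, which means fixing $s_c$, $\rho$, then letting the robber choose $s_r$ and $R$; the whole point of the weak game is that the robber gets to pick $s_r$ and $R$ after seeing $s_c$ and $\rho$, which gives him the room to work at the scale of the asymptotic minor.

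\smallskip

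The key steps, in order. (1) Unpack the definition of asymptotic minor: $H$ being an asymptotic minor of $G$ means there is some scale parameter (say the branch sets are $K$-dense at pairwise distance $> K'$, with connecting ``paths'' that are coarsely connected at the same scale) such that $G$ contains a ``blown-up'' copy of $H$ at scale controlled by some function of $s_c, \rho$. Crucially, since the robber moves after $s_c, \rho$ are announced, the robber sets $s_r$ large enough (and $R$ large enough) to move freely within each branch set and along each connecting path — i.e., at the relevant scale the branch sets behave like single vertices and the blown-up $H$ behaves like $H$ itself for the robber. (2) Because $H$ has treewidth $\ge k+1$, by the classical Seymour–Thomas/ bramble duality (or the tangle characterization of treewidth), $H$ has a bramble of order $\ge k+1$, equivalently a winning robber strategy against $k$ cops in the (finite, speed-1, reach-0) game on $H$. (3) Lift this finite winning robber strategy to the weak game on $G$: whenever the finite robber sits on a vertex $v$ of $H$, the coarse robber sits somewhere in the branch set $B_v$; whenever the finite robber traverses an edge $uv$ of $H$, the coarse robber traverses the corresponding connecting path, using his large speed $s_r$ to cross branch sets in bounded time. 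The cops, moving at speed $s_c$ with capture radius $\rho$, can only ``cover'' branch sets at a bounded rate; translate a coarse cop's position to the set of $H$-vertices whose branch sets it is within distance $\rho$ of — this image has bounded diameter in $H$ (if the branch sets are spread out enough, it is essentially a single vertex of $H$, or a bounded ball), so $k$ coarse cops project to $\le k$ ``finite cops'' (up to a bounded blow-up of reach, which one absorbs by choosing the separation scale of the branch sets large relative to $s_c, \rho$). Since the finite robber evades $k$ cops forever while revisiting his home vertex infinitely often, the coarse robber revisits $B_v$ — hence a fixed ball of radius $R$ around a point of $B_v$ — infinitely often, and is never within $\rho$ of a cop. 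This contradicts the assumed cop win, so $\wco(G) \ge \tw(H)$.

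\smallskip

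The main obstacle I anticipate is step (3), and specifically making the ``projection'' of coarse cops to finite cops genuinely faithful: a coarse cop near the boundary between two branch sets, or sitting on a long connecting path, should not be allowed to block the robber at both endpoints at once, nor should his movement at speed $s_c$ let him ``jump'' between far-apart branch sets within a single round of the lifted game. This forces a careful quantitative choice: the branch sets and connecting paths of the asymptotic minor must be separated by a distance that is large compared to $s_c$ and $\rho$ (and the robber's speed $s_r$ must be chosen large compared to the diameters of the branch sets and the lengths of the connecting portions the robber needs to cross in one turn). One must check that the definition of asymptotic minor indeed provides such separation at an arbitrary scale — i.e., that it is a genuinely ``coarse'' notion robust under rescaling — and then thread the quantifiers correctly through the weak game's order of play ($s_c, \rho$ first; then $s_r, R$). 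A secondary technical point is handling the synchronization of turns: one coarse round is not one finite round, so the lifting should be set up so that the robber simulates one finite-game move over a bounded number of coarse rounds, during which the cops' finite-projection moves by at most a bounded amount — again absorbed into the reach of the finite cops, which is fine because bramble order is robust to bounded-reach cops after rescaling $H$ (equivalently, one works with $H'$ a bounded power of $H$, which still has treewidth $\ge \tw(H) - O(1)$, and then takes $H$ large). The cleanest writeup may instead fix the reach issue once and for all by proving the bramble/tangle of $H$ survives the bounded-radius coarsening directly, rather than passing to a power of $H$.
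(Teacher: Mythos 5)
Your overall architecture is the same as the paper's: take a $D$-fat $H$-minor with $D$ chosen after seeing $s_c$ and $\rho$, project each cop to at most one vertex of $H$ (the fatness guarantees uniqueness of the projection), let the robber pick $s_r$ equal to the diameter/longest path of the whole fat-minor structure so that he is effectively an infinite-speed robber inside it, and use the Seymour--Thomas duality ($\mathrm{bn}(H)=\tw(H)+1$) to evade. However, there are two genuine gaps in step (3). First, a bramble (or haven) of order $k+1$ is \emph{not} equivalent to a winning robber strategy against $k$ cops in the standard speed-$1$, reach-$0$ game (complete graphs are a counterexample); what it certifies is a winning strategy in the Seymour--Thomas \emph{helicopter} game, in which the robber relocates while the cops are in flight, i.e.\ he only needs to avoid the cops that stay put. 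Second, and more seriously, even that strategy does not lift by your projection: in the coarse game the robber moves \emph{after} the cops have landed at their new positions $B$, so he must travel from $\beta(A_H)$ to $\beta(B_H)$ while avoiding a $\rho$-neighbourhood of all of $B$, not just of $A\cap B$. The haven only guarantees that $\beta(A_H)$ and $\beta(B_H)$ \emph{touch}, and the single connecting edge $uv$ it provides can have its realizing path $P_{uv}$ blocked by a cop sitting on it (such a cop necessarily projects to the endpoint $u\in\beta(A_H)$, so $u\in B_H$ and the robber's escape route in $H$ passes through an occupied vertex). Making the branch sets and paths more separated does not help here: separation ensures each cop blocks at most one path, but cannot prevent it from blocking that one.

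The paper closes exactly this gap with its most technical argument: it classifies the \emph{guarded} edges $e_1,\dots,e_s$ (those whose paths are within reach of some cop), splits the cops into the $s$ guards and the remaining set $Z$, and evaluates the haven on an interpolating sequence of sets $U_1\cup Z_H,\dots,U_s\cup Z_H$ of size at most $t$ to produce a chain of components $F_1,\dots,F_s$ through which the robber can detour, each consecutive pair joined by an unguarded path. This consumes one unit of the haven's order beyond the number of cops, which is precisely why the theorem yields $\wco(G)\ge\tw(H)$ rather than $\tw(H)+1=\mathrm{bn}(H)$. Your writeup correctly flags ``a cop sitting on a long connecting path'' as the main obstacle, but the resolution you propose (rescaling) does not address it, so the central step of the proof is missing.
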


Using this result, we connect Question \ref{q:1infty} to a recent (disproved) conjecture of Georgakopoulos and Papasoglu on graphs with no large grids as asymptotic minors \cite{GP23}. Using recent partial positive results of Albrechtsen and Hamann \cite{AH24} on this conjecture, we deduce the following partial answer to Question \ref{q:1infty}.

\begin{theorem}\label{thm:1infty}
For any finitely presented group $\Gamma$, $\wco(\Gamma)=1\text{ or }\infty$.
\end{theorem}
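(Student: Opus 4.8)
The plan is to prove the dichotomy by showing that if $\wco(\Gamma)$ is finite, then the Cayley graph of $\Gamma$ must be quasi-isometric to a tree, whence $\wco(\Gamma)=1$. Fix a finite generating set of $\Gamma$ and let $G$ be the corresponding Cayley graph; it is connected and locally finite, and since $\Gamma$ is \emph{finitely presented}, $G$ is coarsely simply connected (equivalently, there is a constant $D$ such that every cycle of $G$ can be filled using cycles of length at most $D$). This is precisely the kind of regularity hypothesis under which the partial positive results of Albrechtsen and Hamann \cite{AH24} on the Georgakopoulos--Papasoglu conjecture apply.

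Now suppose $\wco(\Gamma)=k<\infty$. Since the treewidth of the $n\times n$ grid equals $n$, Theorem~\ref{thm: minor intro} implies that $G$ cannot contain the $(k+1)\times(k+1)$ grid as an asymptotic minor; in other words, $G$ has no large grid as an asymptotic minor. I would then invoke \cite{AH24}, which confirms the conclusion of the Georgakopoulos--Papasoglu conjecture for the relevant class of graphs (in particular for Cayley graphs of finitely presented groups): such a graph with no large grid as an asymptotic minor is quasi-isometric to a graph of bounded treewidth. Using that $G$ is a Cayley graph, this can be upgraded to the statement that $G$ is quasi-isometric to a tree, via the classical fact that a finitely generated group is quasi-isometric to a graph of bounded treewidth if and only if it is virtually free. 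Finally, by Theorem~\ref{thm:wcop1} (its easy direction, which only uses that trees have weak cop number $1$ together with the quasi-isometry invariance of $\wco$), we conclude $\wco(\Gamma)=\wco(G)=1$. Combining the two cases yields $\wco(\Gamma)\in\{1,\infty\}$.

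The step I expect to be the main obstacle is lining up the machinery from \cite{AH24}: one has to check that finite presentability is exactly what supplies the coarse-simple-connectivity hypothesis their argument needs, that the notion of asymptotic minor there matches (up to adjusting constants) the one feeding into Theorem~\ref{thm: minor intro}, and that the conclusion can indeed be sharpened from ``quasi-isometric to a graph of bounded treewidth'' to ``quasi-isometric to a tree'' in the presence of a cocompact action of $\Gamma$; if \cite{AH24} already yields ``quasi-isometric to a tree'' directly for this class, this last point evaporates. It is also worth stressing that the restriction to finitely presented (rather than merely finitely generated) groups is not an artefact of the proof but reflects the current state of the Georgakopoulos--Papasoglu conjecture: extending the dichotomy to all finitely generated groups would require handling Cayley graphs for which the relevant grid statement is not known.
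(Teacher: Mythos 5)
Your proposal is correct and is essentially the paper's argument: both combine Theorem~\ref{thm: minor intro} (asymptotic grid minors force large weak cop number) with the Albrechtsen--Hamann theorem for finitely presented groups, plus the fact that virtually free groups have weak cop number $1$. The only cosmetic difference is that you run the argument in the contrapositive direction (finite $\wco$ $\Rightarrow$ no large grid minor $\Rightarrow$ virtually free), whereas the paper states it as ``not virtually free $\Rightarrow$ infinite grid as asymptotic minor $\Rightarrow$ $\wco=\infty$''; the detour through ``quasi-isometric to a graph of bounded treewidth'' is unnecessary since \cite{AH24} already gives the virtually-free dichotomy directly.
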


Using Theorem \ref{thm:wcop1}, this immediately implies that any finitely presented group which is not virtually free has infinite weak cop number (generalizing a number of results of \cite{lee2023coarse,Cornect_Martinez24} on specific groups). 

\subsection*{The 2-dimensional grid} The final result is about the strong cop number of the Euclidean plane (or equivalently by quasi-isometry, the 2-dimensional square grid). It was proved in \cite[Theorem C]{lee2023coarse} that the 2-dimensional grid has infinite weak cop number. The authors asked whether this extends to the strong cop number \cite[Question D]{lee2023coarse}. We prove that this is indeed the case.

\begin{theorem}\label{thm:z2}
$\sco(\mathbb{Z}^2)=\infty$.
\end{theorem}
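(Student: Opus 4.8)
The plan is to produce, for every integer $k$, a strategy by which the robber survives against $k$ cops; note that $\sco(G)\le\wco(G)$, so neither \cref{thm: minor intro} nor the known equality $\wco(\mathbb{Z}^2)=\infty$ helps here --- the whole difficulty is that in the strong game the cops fix the radius of capture $\rho$ only \emph{after} seeing $s_r$, so the surviving strategy must work even when $\rho$ is astronomically larger than the robber's speed. Fix $k$ and the cops' speed $s_c$. The robber responds with a speed $s_r$ that is a suitable multiple of $s_c$ growing with $k$ --- taking $s_r:=2(k+1)s_c$ will do; the salient features are that $s_r>s_c$, that $s_r/s_c$ is large in terms of $k$, and that $s_r$ is chosen before $\rho$. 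The cops then fix $\rho$, and finally the robber fixes $R$ enormous relative to all of $k,s_c,s_r,\rho$, say $R\ge\bigl(100\,k\,(s_c+s_r+\rho)\bigr)^{3}$. The robber will keep his entire trajectory inside $B(0,R)$ and never let a cop come within distance $\rho$ of him; this makes him win, as he then visits $B(0,R)$ infinitely often and is never captured.

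The core is an invariant asserting that the robber sits \emph{deep} inside a large cop-free region. Set $\alpha:=10\,s_c$, and for a cop configuration $c_1,\dots,c_k$ call a connected component of $B(0,R)\setminus\bigcup_i B(c_i,\rho+\alpha)$ \emph{big} if its diameter is at least $R/2$. The first ingredient is a planar separation lemma: a family of $k$ balls of radius $\rho+\alpha$ has total diameter at most $2k(\rho+\alpha)\ll R$, and in the plane such a family can neither separate two points of $B(0,R)$ lying at distance $\ge R/2$, nor cut off from $B(0,R)$ a region of diameter exceeding $4k(\rho+\alpha)$; hence at every round there is a \emph{unique} big component $U_t$, all other components of $B(0,R)\setminus\bigcup_i B(c_i,\rho+\alpha)$ having diameter $O(k\rho)$. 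The robber maintains the invariant that $r_t\in U_t$ and $d\bigl(r_t,\,\R^2\setminus U_t\bigr)\ge\ell$ for a depth threshold $\ell$ of order $k(\rho+s_c)$; the hugeness of $R$ lets him establish this right after the cops' initial placement.

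The crux is maintaining the invariant for a single round. After the cops move (each by at most $s_c$) the shrunk balls $B(c_i,\rho+\alpha-s_c)$ still miss $U_t$, so $U_t$ lies inside the big component $U'$ of the new partition and $r_t$ is still $\ell$-deep in $U'$ --- in particular the robber is not captured. He must now move by at most $s_r$ and be again $\ell$-deep in the new big component. This is exactly where the strong game bites: since $\rho$ may dwarf $s_r$, the robber cannot restore depth by simply stepping radially away from an approaching cop --- two cops on nearly opposite sides can pin him in a long thin cop-free corridor of width $\Theta(s_c)$, and more generally the $k$ cops may try to close a slowly shrinking noose of overlapping balls around him. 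The robber's answer uses his speed advantage together with the vastness of $B(0,R)$: among all directions that do not decrease his distance to any cop currently within $\ell+O(s_c)$ of him (the threatening ones), he steps in the one pointing most directly towards the center $0$. One then shows that this either strictly increases his depth, or makes him slide along the developing corridor or noose faster than the cops can seal its opening: any barrier that $k$ cops can form spans a region of diameter $O(k\rho)$, while the robber is $s_r/s_c>k$ times faster than a cop trying to slide a ball over the $\Theta(\rho)$-wide gap, so he reaches and crosses the gap within $O(k\rho/s_r)=O(\rho/s_c)$ rounds, and the $\Theta(R)\gg k\rho$ of free room at his disposal guarantees that he never needs to leave $B(0,R)$ to carry this out. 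Iterating, his depth never falls below a fixed positive constant, so he survives forever.

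Finally, the separation lemma and the sliding argument read most cleanly in the Euclidean plane $\R^2$, and everything transfers to $\mathbb{Z}^2$ up to additive constants --- straight segments, balls and distances in $\R^2$ are realized within $O(1)$ in $\mathbb{Z}^2$, harmlessly so since every threshold above carries slack $\Theta(s_c)$ --- or, alternatively, one runs the argument in any convenient graph quasi-isometric to $\mathbb{Z}^2$ and appeals to \cref{thm:qi-intro}. The step I expect to be the real obstacle is the quantitative sliding claim: proving that $k$ slow, bounded-reach cops can never trap a slightly faster robber deep inside a huge disc --- equivalently, that the invariant ``$r_t$ is $\Omega(1)$-deep in the big cop-free component'' is maintainable for all time --- with every constant independent of $\rho$.
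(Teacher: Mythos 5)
Your overall plan---keep the robber forever inside a huge ball, at controlled distance from a region the cops cannot close off, with $s_r$ a multiple of $s_c$ growing in $k$ and all safety margins scaled by $\rho$---is in the right spirit, but the proposal has a genuine gap exactly where you flag it: the ``quantitative sliding claim'' is the entire content of the theorem and is not proved. Worse, the accounting you give does not close even heuristically. With $s_r=2(k+1)s_c$, traversing a barrier region of diameter $\Theta(k\rho)$ costs $\Theta(k\rho/s_r)=\Theta(\rho/s_c)$ rounds, during which each cop moves $\Theta(\rho)$---which is precisely the order of the width of the gap it must seal. So the race between the robber reaching the opening and a cop sliding a ball of radius $\rho+\alpha$ into it is a dead heat at the level of your constants, and nothing in the proposal breaks the tie. (Your ``unique big component'' separation lemma and the greedy ``step toward $0$ among non-threatening directions'' rule are also only asserted, but these are secondary: the escape mechanism is the missing idea, and ``iterating, his depth never falls below a fixed positive constant'' is a conclusion, not an argument.)

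The paper avoids this dynamic race altogether with a static pigeonhole. It fixes a $(2t+1)\times 1$ array of square ``rooms'' of side $5\rho(t+1)s$, joined pairwise by $t+1$ connecting paths that are mutually $5\rho s$ apart. Since a ball of radius $(3\rho+1)s$ meets at most $2$ rooms, at every instant at most $2t$ of the $2t+1$ rooms fail to be super safe, so one is always super safe; and since a ball of radius $2\rho s$ around a cop meets at most one of the $t+1$ paths, some path to that room is unguardable. The robber's speed is taken to be $5(t+1)(2t+1)s=\Theta(t^2)s$, so the whole transit (length at most $5\rho(t+1)(2t+1)s$) takes exactly $\rho$ rounds, during which the cops advance only $\rho s$---strictly less than every margin ($2\rho s$ clearance, $5\rho s$ path separation, $(3\rho+1)s$ versus $(2\rho+1)s$ safety thresholds). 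Two concrete lessons if you want to repair your write-up: the robber's speed should be superlinear in $k$ (linear is not enough for your own traversal estimate), and you should replace the adversarial ``corridor/noose'' analysis by a counting argument that guarantees, at every round, a pre-certified safe destination and a pre-certified unblocked route to it.
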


It was proved in \cite{Cornect_Martinez24} that Thompson's group $F$ has infinite weak cop number, using the result that $\wco(\mathbb{Z}^2)=\infty$ \cite{lee2023coarse} and the fact that $F$ retracts onto $\mathbb{Z}^2$. Using Theorem \ref{thm:z2}, the same proof immediately yields the stronger statement that $\sco(F)=\infty$.

\subsection*{Divergence games} 
 Our lower bounds on the weak and strong cop numbers are proved by giving winning strategies for the robber (in which he visits   a given ball of finite radius infinitely many times). It turns out that in most of our proofs, the robber can in fact do much more: he can simply remain inside a given ball of finite radius, forever evading the cops. This motivates us to introduce a variant of the weak and strong games studied in this paper where the goal of the cops is to make sure that the distance between the robber and some root vertex $v$ tends to infinity. We conclude by stating a number of observations on this game, and asking some natural questions about it: in particular, are the corresponding cop numbers different than in the original setting of \cite{lee2023coarse}?

\subsection*{Organization of the paper} We start with some definitions and notation in Section \ref{sec:prel}. In Section \ref{sec:qi} we prove that the weak and strong cop number are monotone under quasi-isometric embeddings. In Section \ref{sec:hy}, we prove that any graph with strong cop number 1 is hyperbolic. In Section \ref{sec:am} we highlight the connections between weak cop numbers and asymptotic minors of large treewidth, and give a number of applications. In Section \ref{sec:z2} we prove that $\sco(\mathbb{Z}^2)=\infty$. We conclude in Section \ref{sec:div} by introducing and studying the Divergence games mentioned in the paragraph above, and an alternative version of the weak and strong games studied in the paper where the robber needs to come back infinitely often in some finite subgraph (rather than a subgraph of finite radius).

\subsection*{Related work} We have learned very recently that a number of our results have been obtained independently by Appenzeller and  Klinge  \cite{AK25}. This includes the results on graphs of weak cop number 1, graphs of strong cop number 1, and the strong cop number of the grid. The proofs of all their results appear to be  quite different from our proofs, using different characterizations of hyperbolicity and quasi-trees.

\medskip

Our Theorem \ref{thm:1infty} (also appearing as Corollary \ref{cor: weak-infty} later in the paper), stating that the weak cop number of every finitely presented group is either 1 or $\infty$, has been subsequently extended by Lehner \cite{Leh25} to all locally finite vertex-transitive graphs. (We note that our results relating weak cop numbers and the treewidth of asymptotic minors (Theorem \ref{thm: minor intro}) hold for any connected graph, not necessarily vertex-transitive or even locally finite.)

\section{Preliminaries}\label{sec:prel}

 \subsection{Graph Theory}
For a graph $G$, we denote its vertex set by $V(G)$ and edge set by $E(G)$. If $G$ is finite, we denote the size of $V(G)$ by $n$ and size of $E(G)$ by $m$. In this paper, we consider only 
simple graphs. For a graph  $G$  and a subset $U\subseteq V(G)$, $G[U]$ denotes the subgraph of $G$ induced by $U$.

Let $v$ be a vertex of a graph $G$. Then, by $N(v)$ we denote the \textit{open neighbourhood} of $v$, that is, $N(v)= \{u : uv \in E(G)\}$. 
By $N[v]$ we denote the \textit{closed neighbourhood} of $v$, that is, $N[v] = N(v) \cup \{v\}$. For $X \subseteq V(G)$, we define $N_X(v) = N(v) \cap X$ and $N_X[v] = N[v] \cap X$. We say that $v$ \textit{dominates} $u$ if $u\in N[v]$. 

A {\em $u,v$-path} $P$ is a path with endpoints $u$ and $v$, and the \emph{length} of a path $P$ is the number of edges on $P$. A path is a \textit{geodesic} if it is a shortest path between its endpoints. For $u,v\in V(G)$, let $d_G(u,v)$ denote the length of a shortest $u,v$-path. As all graphs considered in the paper are connected, $d_G(u,v)$ is well-defined (and finite). 

For a vertex $v$ in a graph $G$, and an integer $t\ge 0$, the \emph{ball of radius $t$ centered in  $v$}, denoted by $N^t_G[v]$, is the set of vertices at distance at most $t$ from $v$ in $G$. For a set $X\subseteq V(G)$, we write $N^t_G[X]$ for the set of vertices at distance at most $t$ from $X$ in $G$.

When the graph $G$ under consideration is clear from the context, we drop the subscript $G$ in $N_G^t[X]$ and $d_G(u,v)$ to ease the presentation. 

\medskip

A \emph{tree-decomposition} of a graph $G$ is a pair $(T,\mathcal V)$, where $T$ is a tree and $\mathcal V=(V_t)_{t\in V(T)}$ is a family of subsets $V_t$ of $V(G)$ such that:
\begin{itemize}
 \item $V(G)=\bigcup_{t\in V(T)}V_t$;
 \item for every nodes $t,t',t''$ such that $t'$ is on the unique path of $T$ from $t$ to $t''$, $V_t\cap V_{t''}\subseteq V_{t'}$; 
 \item every edge $e\in E(G)$ is contained in some induced subgraph $G[V_t]$ for some $t\in V(T)$. 
\end{itemize}

The sets $V_t$ with $t\in V(T)$ are called the \emph{bags} of $(T,\mathcal V)$.
The \emph{width} of $(T,\mathcal V)$ is the supremum of $|V_t|-1$, for $t\in V(T)$ (which might be infinite), and the \emph{treewidth} of a graph $G$, denoted by $\mathrm{tw}(G)$, is the infimum of the width of $(T,\mathcal V)$, 
among all tree-decompositions $(T,\mathcal V)$ of $G$. For every edge $e=uv\in E(T)$, if we let $T_1,T_2$ denote the two connected components of $T-e$ and for every $i\in \sg{1,2}$, we set $V_i:=\bigcup_{t\in V(T_i)}V_{t}$, then $(V_1, V_2)$ and $(V_2, V_1)$ are two separations of $G$, which we call the \emph{edge-separations} associated to $e$. 
 
\subsection{Cops and robber}
We consider two variants of the Cops and robber game, both of which are perfect information games. The game is played in a fixed graph $G$ between two players, a \textit{cop player} controlling $k$ cops and a \textit{robber player} controlling a single \textit{robber}, and has the following parameters of interest (all of which are non-negative integers), and the two variants of the game depend on the order in which these parameters are chosen:
\begin{itemize}
    \item Speed of cops, denoted by $\scop$. 
    \item Speed of the robber, denoted by $\sr$.
    \item Reach of the cops, denoted by $\rho$.
    \item Radius of the ball to protect, denoted by $R$.
\end{itemize}

The game is played as follows. Initially, the $k$ cops choose vertices $c^1_0,\ldots ,c^k_0$ from $G$ as their initial positions. Then, knowing the initial positions of the cops, the robber chooses a vertex $r_0$ as its initial position. Then the cops and the robber move alternately with the cops moving first. A move of the cops followed by a move of the robber is called a \emph{stage}. The vertices where the cops and robber are located at the end of the $i$-stage are denoted by $c^1_
i,\ldots ,c^k_i$ and $r_i$, respectively.
At the beginning of the $i$-stage, each cop can move from its current position to any vertex at a distance at most $\scop$, that is,
$d(c^j_{i-1}, c^j_{i})\leq \scop$ for every $j\in\{1,\ldots ,k\}$. The robber is \emph{captured} during the $i$-stage if $d(r_{i-1}, c^j_{i})\leq \rho$ for some $j\in\{1,\ldots ,k\}$. After the cops have moved, if the robber has not been captured, the robber can move from its current position $r_{i-1}$ to a vertex $r_i$ if there is a path from $r_{i-1}$ to $r_i$ of length at most $\sr$ such that every vertex in the path is at distance larger than $\rho$ from any of the cop positions $c^1_i, \ldots, c^k_i$.  The cops win the game with parameters $s_c,s_r,\rho,R$, if, for any vertex $v\in V(G)$, eventually they can protect the $R$-ball centered at $v$, by which we mean that the robber is captured or there is $N>0$ such that $d(v,r_i)>R$ for all $i\geq N$. We emphasize that the center $v$ of the ball protected by the cops is known to all players (including the robber) before the other parameters are chosen (including the initial positions of the cops and the robber).

\subsection{Definition of the weak and strong cop numbers.}
We say that a graph $G$ is \emph{$\mathsf{CopWin}(k,\scop,\rho,\sr, R)$} if for any vertex $v$ of $G$, $k$ cops with speed $\scop$ and reach $\rho$ can eventually protect the $R$-ball centered at $v$.
The definitions of the weak cop number and the strong cop number differ in the order in which the parameters of the game are chosen by the two players. For the weak cop number, the robber has an information advantage: the cops choose their speed $\scop$ and reach $\rho$ and then the robber, knowing this information, chooses his speed $\sr$ and the radius $R$ of the ball to protect. For the strong cop number the cops have the advantage of choosing their reach $\rho$ after knowing the robber's speed. More precisely, these invariants are defined as follows:
\begin{itemize}
\item 	We say that $G$ is \emph{$k$-weak cop win} if there exists $\scop \in \mathbb{N}_{>0}$ and $\rho \in \mathbb{N}_{\geq 0}$ such that for any $\sr, R \in \mathbb{N}_{>0}$, $\Gamma$ is $\mathsf{CopWin}(k,\scop,\rho,\sr , R)$. 
\item	
We say that $G$ is \emph{$k$-strong cop win} if there exists $\scop \in \mathbb{Z}_{>0}$ such that for any $\sr \in \mathbb{Z}_{>0}$, there is $\rho \in \mathbb{Z}_{\geq 0}$ such that for any $R \in \mathbb{Z}_{>0}$, $G$ is $\mathsf{CopWin}(k,\scop,\rho,\sr , R)$. 
\end{itemize}
The \emph{strong cop number $\sco(G)$} of a graph $G$ is defined as the
infimum value of $k$ such that $G$ is $k$-strong cop win (with $\sco(G)=\infty$ if there is no such $k$).
The \emph{weak cop number $\wco(G)$} is defined analogously as the infimum $k$ such that $G$ is $k$-weak cop win.
Observe that
\[ 0< \sco(G) \leq \wco(G) .\]
For example, the infinite 2-way path has weak and strong cop number one, as a single cop can push the robber away from any ball. 

\medskip

A simple yet important remark is that the weak (resp.\ strong) cop number of a graph is the supremum of the weak (resp.\ strong) cop numbers of its connected components.  So in the paper we can restrict our attention to connected graphs, and all results can be naturally extended to general graphs by taking the supremum over all connected components.

\medskip

 We observe that when $G$ is connected, the choice of $v$, the center of the ball, is irrelevant in both  games. In particular, if the robber has a winning strategy against $k$ cops when they have to protect a ball centered at $v$ with game parameters $(k,\scop,\rho,\sr , R)$, then  the robber has a winning strategy against $k$ cops when they have to protect a ball centered at any $v'\in V(G)$ with game parameters $(k,\scop,\rho,\sr , R+d(v',v))$. Since the robber chooses the radius of the ball in the end, he can choose the radius to be $R+d(v',v)$. Hence, the robber as a winning strategy when the center of the ball is $v$ if and only if he has a winning strategy when the center of the ball is $v'$. Thus we can always  assume without loss of generality that the robber chooses $v$, the center of the ball to be protected, at the same time as choosing the radius of the ball.

\section{Quasi-isometric embeddings}\label{sec:qi}

Let  $G$ and $H$ be two graphs. We say that $G$ is
\emph{quasi-isometric} to $H$ if there is a map $h: V(G) \rightarrow V(H)$
and constants $C\ge 1$, and $D\ge 0$ such that
\begin{enumerate}
    \item for any $u\in V(H)$ there is $x\in V(G)$ such that $d_H(u,h(x))\le D$, and
    \item for every $x,y\in V(G)$, $$\frac1{C}\, d_G(x,y)-C\le d_H(h(x),h(y))\le C \,d_G(x,y)+C.$$
\end{enumerate}
It is not difficult to check that quasi-isometry is an equivalence relation on graphs, and we
often simply say that $G$ and $H$ are quasi-isometric. If 
condition (1) is omitted in the definition above, we say that $f$ is a
\emph{quasi-isometric embedding of $G$ into $H$}. Note that if $G$ and $H$ are quasi-isometric then $G$ has a quasi-isometric embedding into $H$ and $H$ has a quasi-isometric embedding into $G$, but the converse does not necessarily hold. 

\medskip

The main result of this section is that the weak and strong cop numbers are monotone under quasi-isometric embedding.

\begin{theorem}\label{thm:qi}
Assume that a connected graph $G$ has a quasi-isometric embedding into a connected graph $H$. Then $\wco(G)\le \wco(H)$ and $\sco(G)\le \sco(H)$.
\end{theorem}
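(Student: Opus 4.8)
The plan is to transfer a winning cop strategy from $H$ to $G$. Fix a quasi-isometric embedding $h\colon V(G)\to V(H)$ with constants $C\ge 1$, $D\ge 0$, and a \emph{quasi-inverse} $g\colon V(H)\to V(G)$ obtained by sending each $u\in V(H)$ to (a fixed choice of) a vertex $g(u)$ for which $d_H(h(g(u)),u)$ is as small as possible, i.e.\ the pull-back of a nearest point of the image $h(V(G))$. Two elementary metric facts will be used repeatedly: (i) if $d_H(h(x),h(y))\le t$ then $d_G(x,y)\le C(t+C)$, so $h^{-1}$ is coarsely Lipschitz on the image; and (ii) for any $u\in V(H)$ and $t\ge 0$, the set $\{x\in V(G): d_H(u,h(x))\le t\}$ has $d_G$-diameter at most $C(2t+C)$, so in particular, once $u$ lies within a bounded distance $e=d_H(u,h(V(G)))$ of the image, its ``nearest preimages'' form a set of $d_G$-diameter $O(Ce)$. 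Assume now that $k$ cops have a winning strategy in $H$; we design a strategy for $k$ cops in $G$ in which they maintain a \emph{shadow play} of the $H$-game whose robber occupies $h(r_i)$ whenever the real robber sits at $r_i$, whose cops follow the winning $H$-strategy, and whose cop positions $\tilde c_i^j$ are tracked by the real cops, the real cop $j$ aiming to sit at $g(\tilde c_i^j)$.

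Four parts of the argument are routine. \emph{Parameters (weak game).} Since in the weak game the $H$-cops commit to $(s_c^H,\rho^H)$ first, we may fix $s_c^G$ and $\rho^G$ as functions of $s_c^H,\rho^H,C$ before the robber reveals $(s_r^G,R^G)$, and then run the shadow game with $s_r^H:=2Cs_r^G$ and $R^H:=CR^G+C$, the shadow ball being centered at $h(v)$ when the real ball is centered at $v$. \emph{Capture transfer.} If the shadow robber is captured, i.e.\ $d_H(\tilde c_i^j,h(r_{i-1}))\le\rho^H$ for some $j$, then $\tilde c_i^j$ is within $\rho^H$ of the image, hence $h(g(\tilde c_i^j))$ is within $\rho^H$ of $\tilde c_i^j$ and within $2\rho^H$ of $h(r_{i-1})$, so by (i) we get $d_G(g(\tilde c_i^j),r_{i-1})\le C(2\rho^H+C)=:\rho^G$ and the real robber is caught. \emph{Protected ball transfer.} If the shadow cops eventually keep the shadow robber outside the $R^H$-ball around $h(v)$, then since $d_G(r_i,v)\le R^G$ would force $d_H(h(r_i),h(v))\le CR^G+C=R^H$, the real robber is eventually kept outside the $R^G$-ball around $v$. \emph{Legality of lifted robber moves.} When the real robber traverses a $G$-path $r_{i-1}=p_0,\dots,p_\ell=r_i$ of length $\ell\le s_r^G$ avoiding the reach of every real cop, the shadow robber traverses the concatenation of $H$-geodesics between consecutive $h(p_m)$; this walk has length $\le 2C\ell\le s_r^H$ and stays within $2C$ of the image. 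A shadow cop $\tilde c_i^j$ that came within $\rho^H$ of this walk would be within $\rho^H+2C$ of the image, hence faithfully shadowed by the real cop at $g(\tilde c_i^j)$, which by (i)--(ii) would then lie within $\rho^G$ of some $p_m$, contradicting that the real move avoided all real cops; and a shadow cop far from the image is automatically far from the walk. So the lifted move is a legal $H$-move.

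The heart of the proof, and the step I expect to be the main obstacle, is \textbf{controlling the speed of the real cops}. The shadow cops are only constrained to move at speed $s_c^H$ in $H$, but when a shadow cop wanders far from the image, the pull-back $g(\tilde c_i^j)$ can jump by an amount proportional to its distance $e$ to the image (the staircase-type examples show that this projection is genuinely not coarsely Lipschitz), so naively $i\mapsto g(\tilde c_i^j)$ is not a bounded-speed walk in $G$. The way around this is that such far excursions are \emph{slow}: a shadow cop at distance $e$ from the image needs at least roughly $e/s_c^H$ rounds to return to a bounded neighborhood of it, and -- crucially -- it can only be \emph{relevant} (close enough to threaten the shadow robber, or to block a lifted robber move, as in the two paragraphs above) while it lies in $N^M_H[h(V(G))]$ for the fixed bound $M=\rho^H+O(C)$. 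One therefore has the real cop $j$ chase $g(\tilde c_i^j)$ at a fixed speed $s_c^G$ depending only on $C$, $s_c^H$ and $M$: whenever the shadow cop is in $N^M_H[h(V(G))]$ on two consecutive rounds the pull-back moves by only $O(C(M+s_c^H))\le s_c^G$, and whenever it leaves $N^M_H[h(V(G))]$ and returns, the return occupies proportionally many rounds, exactly enough time for the real cop to travel between the two pull-backs; hence the real cop is at $g(\tilde c_i^j)$ on every ``relevant'' round, which is all that capture and blocking need. For the weak game this completes the proof, since $M$ (hence $s_c^G$) may be fixed before the robber moves because $\rho^H$ is. For the strong game the order of quantifiers is the delicate point: $\rho^H$ is only revealed after $s_r^H$ (hence after $s_r^G$), so $s_c^G$ cannot be allowed to depend on $M$; decoupling the two requires an additional device (interleaving the clocks of the two games so the real cops gain time without the shadow robber's speed -- and hence $\rho^H$ -- being forced up, or first replacing $H$ by an auxiliary graph of which $G$ is an honest quasi-retract), and this is where the argument requires genuine care beyond the weak case.
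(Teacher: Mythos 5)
Your proposal runs in the opposite direction from the paper's proof: you lift a winning \emph{cop} strategy from $H$ down to $G$ (shadow cops in $H$ pulled back via a quasi-inverse $g$, the real robber pushed forward via $h$), whereas the paper lifts a winning \emph{robber} strategy from $G$ up to $H$ (real cops in $H$ projected to virtual cops in $G$). The two approaches are dual and face the same core obstruction, which you correctly isolate: the pull-back of a shadow cop wandering far from $h(V(G))$ is not a bounded-speed walk in $G$. Your routine steps (capture transfer, ball transfer, legality of lifted robber moves, and the weak-game parameter bookkeeping) are fine. The problem is that your resolution of the core obstruction does not actually close.

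Concretely, your claim that a return from an excursion ``occupies proportionally many rounds, exactly enough time for the real cop to travel between the two pull-backs'' is only valid for an \emph{offline} tracker: the distance from $g(\tilde c_{i_0})$ to $g(\tilde c_{i_1})$ is indeed at most $C(2M+Ts_c^H+C)\le T\bigl(C(2M+C)+Cs_c^H\bigr)$, but the real cop does not learn the re-entry point $\tilde c_{i_1}$ until round $i_1$, and no strategy of the form ``chase the current pull-back $g(\tilde c_i)$ at fixed speed'' works, because while the shadow cop is at distance $e$ from the image its pull-back can jump by $\Theta(Ce)$ per round; over a descent from height $E$ the accumulated deficit is quadratic in $E$ while the tracker only gains linearly. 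What is needed is an \emph{anchored, amortized} tracking invariant: the tracker commits to the pull-back of the projection of an anchor point (where the excursion effectively began), tolerates a \emph{multiplicative} error proportional to the shadow cop's current distance to the image, and only relocates when the shadow cop has moved a constant fraction of that distance --- at which point the relocation is affordable over the elapsed rounds. This is precisely the content of the paper's rules (R1)--(R2) and ``safe state 2'' (in the dual direction), and it is the part your sketch omits. It also dissolves your second, explicitly acknowledged gap: because the multiplicative invariant and the tracking speed depend only on $C$ and $s_c^H$ (not on $\rho^H$), with the reach $\rho^G$ alone absorbing the multiplicative constant times $\rho^H+O(C)$, the strong-game quantifier order ($s_c^G$ fixed before $\rho^H$ is revealed) works out without any interleaving or quasi-retract detour. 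As written, your proof is complete neither for the weak nor for the strong case, though the skeleton is sound and is, up to reversing the roles of the two players, the same as the paper's.
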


\begin{proof}
  Assume that there is a map $h: V(G) \rightarrow V(H)$ and a constant $C\ge 1$ such that  for every $x,y\in V(G)$, $\frac1{C}\, d_G(x,y)-C\le d_H(h(x),h(y))\le C \,d_G(x,y)+C$. Let us denote the speed of the cops in $H$ by $s\ge 1$, and their radius of capture by $\rho\ge 0$.   

  \smallskip

  We will prove that the robber can use any winning strategy in $G$ against $t$ cops to devise a winning strategy in $H$ against $t$ cops. Let $U=h(V(G))$ be the image of $V(G)$ in $V(H)$. If all cops in $H$ are close to $U$, then the robber can map their positions and his own position in $H$ to some virtual positions in $G$, using the preimage of the elements of $U$ under $h$. It can then use a winning strategy in $G$, move in $G$ accordingly, and then map its new position in $G$ to a new position in $H$, using $h$. The situation is less clear if some cops in $H$ are far from $U$, and the difficulty of the proof will be to deal with this issue.

  \medskip

  Let $U^+=N_H^C[U]$ be the set of vertices at distance at most $C$ from $U$ in $H$. We will need the following useful observation.

  \begin{claim}\label{C:moveShadow}
    Let $u,v \in U$ be two vertices of $H$. Then there is a path $P\subseteq U^+$ between $u$ and $v$ in $H$ which is the concatenation of at most $C\,d_H(u,v)+C^2$ paths of length $2C$ whose endpoints are all in $U$. In particular $P$ has length at most $2C^2\, d_H(u,v)+2C^3$.
    \end{claim}
    \begin{proofofclaim}
      Let $x,y \in V(G)$ be two vertices such that $h(x) = u$ and $h(y) = v$. By the definition of $h$, we have $d_G(x,y) \leq C\,d_H(u,v)+C^2$. Hence, there exists an $(x,y)$-path $Q = (x= x_0,\ldots,x_{k} = y)$ in $G$ of length $k \leq C\, d_H(u,v)+C^2$. Note that for each $i\in [k]$, there is an $(h(x_{i-1}),h(x_i))$-path in $H$ of length at most $2C$, and in particular this path is contained in
       $U^+$. The concatenation of these $k$ paths is a walk that contains a $(u,v)$-path in $H$ of length at most $2C(C\, d_H(u,v)+C^2)$ that is contained in $U^+$. This completes the proof of our claim. 
     \end{proofofclaim}

     Before mapping the positions of the cops in $H$ to some virtual positions in $G$, the robber will map the positions of the cops in $H$ to some virtual positions in $U$.    For a vertex $v\in V(H)$, let $\pi(v)$ denote the subset of vertices of $U$ at distance $d_H(v,U)$ from $v$ in $H$ (that is, these vertices of $U$ minimize the distance between $v$ and $U$).  For each cop $c$ in $H$, we create a corresponding (virtual) cop denoted by $c_U$. We will maintain the following rules for the moves of $c_U$.  

     \begin{enumerate}[(R1)]
     \item Each cop $c_U$ moves in $U^+$ at speed $s_U=500C^3s$, and lies in $U$ at the end of every move.\label[rule]{r1}
     \item If $c$ occupies a vertex $v$, then $c_U$ occupies a vertex $u\in U$ such that for any $w\in \pi(v)$, $d_H(u,w) \leq 16C^3\, d_H(v,w)$.\label[rule]{r2}
     \end{enumerate}
     
     We will show that for every move of a cop $c$ in $H$, the robber can move $c_U$ while respecting the two rules above. Moreover, assuming that each virtual cop $c_U$ has a radius of capture $\rho_U=2\rho+C+16C^3(\rho+C)$, we will show that if the robber stays at distance more than $\rho_U$ from the virtual cop $c_U$ while remaining in $U^+$, then it also stays at distance more that $\rho$ from the cop $c$ in $H$.

     \medskip

     Now, we define some \emph{safe} game states corresponding to the position of a cop $c$ in $H$ and the position of $c_U$. Let $c$ be placed on a vertex $v\in V(H)$ and $c_U$ be placed on a vertex $u\in U$ after a cop move. We have the following two safe states:

    \medskip
    \noindent\textbf{Safe State 1:} $d_H(v,U)\leq 32 s$ and $u\in \pi(v)$. 

    \smallskip
    \noindent\textbf{Safe State 2:} $d_H(v,U)>32 s$ and there exists a vertex $v'$ such that the following holds
    \begin{enumerate}[(i)]
    \item $u\in \pi(v')$
    \item there exists a $(v',v)$-path $P$ of length at most $\tfrac14\, d_H(v',u)$ such that $d_H(P,U) >32s$.
    \end{enumerate}
    
    Note in particular that if $c$ is located in $v$ and $c_U$ is located in a vertex of $\pi(v)$, then $c$ and $c_U$ are in safe state 1 or 2.

    \medskip

    We begin with the easy claim that shows that if we can reach a safe state, then \cref{r2} holds.
    \begin{claim}\label{C:SafeStates}
        If $c$ and $c_U$ are placed in $H$ in safe state 1 or 2, then \cref{r2} holds.
    \end{claim}
    \begin{proofofclaim}        
      The proof of the claim is obvious if we are in safe state 1.
      Hence, let us assume that $c$ and $c_U$ are in safe state 2. Let $v'$ be a vertex as in the definition of safe state 2. Let $w\in \pi(v)$. Then, observe that
      \begin{equation}\label{eq1}
        d_H(u,w) \leq d_H(u,v')+d_H(v',v)+d_H(v,w)\le \tfrac54 d_H(u,v')+d_H(v,w).
        \end{equation}Furthermore, since $u\in \pi(v')$, $d_H(u,v')\le d_H(w,v') \leq d_H(w,v)+d_H(v,v')$, and hence, $d_H(v,w) \geq d_H(u,v')-d_H(v',v) \ge \tfrac34d_H(u,v')$. It follows that $d_H(u,v')\le \tfrac43 d_H(v,w)$. Hence, \cref{eq1}  becomes $d_H(u,w) \leq \tfrac53 d_H(v,w)+d_H(v,w) \leq \tfrac83 d_H(v,w)$, and thus \cref{r2} holds.
    \end{proofofclaim}

    We will now prove that if $c$ and $c_U$ are not in a safe state anymore after a move of $c$, then the robber can move $c_U$ according the rules \cref{r1} and \cref{r2} and return to a safe state in a finite number of moves.

     \begin{claim}\label{C:move}
       For every move of a cop $c$ in $H$, the robber can move $c_U$ according the rules \cref{r1} and \cref{r2}. Moreover, if the robber stays at distance more than \[\rho_U=2\rho+C+16C^3(\rho+C)\] from  the cop $c_U$ in $H$, while remaining in $U^+$ the whole time, then it also stays at distance more that $\rho$ from the cop $c$ in $H$.
     \end{claim}

     \begin{proofofclaim}
       Let $v^*$ be the location of $c$ at the beginning of the game. Then the robber chooses for $c_U$ any vertex $u^*\in \pi(v^*)$. If $d_H(v^*,U)\le 32s$, then we are in safe state 1, and if $d_H(v^*,U)> 32s$ we are in safe state 2 (as seen by choosing $v'=v^*$ in the definition of safe state 2).
       
       As long as $c$ and $c_U$ remain in a safe state after a move of $c$, we do not change the location of $c_U$.

       Now we assume that $c$ and $c_U$ are in a safe state, with $c$ located on $v_0$ and $c_U$ located on $u_0\in U$, and then $c$ moves to a new location $v$, so that $c$ and $c_U$ are not in a safe state anymore after the move.

       Assume first that $d_H(v,u_0)\le 100s$. Let $u$ be any vertex in $\pi(v)$. Then, $d_H(u,v)\le 100s$, and thus $d_H(u,u_0)\le 200s$. By Claim \ref{C:moveShadow}, there is a path $P$ in $U^+$ between $u_0$ and $u$ of length at most $400C^2s+2C^3\le 500C^3s=s_U$. The robber then moves $c_U$ along $P$ in a single step to place it  on $u$. Note that if $d_H(v,u)\le 32s$ then $c$ and $c_U$ are in safe state 1, and if $d_H(v,u)> 32s$ then $c$ and $c_U$ are  in safe state 2 (with $v'=v$ in the definition of safe state 2).

       Assume now that $d_H(v,u_0)> 100s$. In particular, as $d_H(v_0,v)\leq s$, we have $d_H(v_0,u_0)> 32s$ and thus $c$ and $c_U$ were in safe state 2 when they where located on vertices $v_0$ and $u_0$. Let $v'_0$ be a vertex such that $u_0\in \pi(v_0')$ and $d_H(v_0,v_0')\le \tfrac14 d_H(u_0,v_0')$ (the existence of such a vertex follows from the definition of safe state 2). Let $\alpha=d_H(u_0,v_0')/s$. Then $d_H(u_0,v)\le \alpha s+\alpha s/4+s=(5\alpha/4+1)s$.
Note that since $(5\alpha/4+1)s\ge d_H(v,u_0)> 100s$, we have $\alpha\ge 79$.
       
Let $u\in U$ be any vertex from $\pi(v)$. Then $d_H(u_0,u)\le d_H(u_0,v)+d_H(v,u)\le  2d_H(u_0,v)\le (5\alpha/2+2)s$.  By Claim \ref{C:moveShadow}, there is a path $P$ in $U^+$ between $u_0$ and $u$ which is the concatenation of at most $C(5\alpha/2+2)s+C^2\le C^2(5\alpha/2+3)s$ paths of length at most $2C$, whose endpoints are all in $U$. In particular $P$ has length at most $2C^3(5\alpha/2+3)s$.
The robber then moves $c_U$ along $P$ to place it  on $u$. Recall that $c_U$ travels at speed $s_U=500C^3s$, so it can travel at speed at least $500C^3s-2C\ge 498C^3s$ along $P$ while always finishing his move on vertices of $U$ (to respect rule \cref{r1}). In particular 
it takes him at most \[\left\lceil \frac{2C^3(5\alpha/2+3)s}{498C^3s} \right\rceil= \left\lceil \tfrac1{249}(5\alpha/2+3)\right\rceil \le \alpha/40\]
steps to do so (here we have used $\alpha\ge 79\ge 40$).

During these  $\alpha/40$ steps, $c$ has only been able to move to a vertex $v''$ at distance at most $\alpha s/40$ from $v$, saying following some path $Q$. See Figure \ref{fig:pos} for an illustration.

\begin{figure}[htb] 
  \centering 
  \includegraphics[scale=1.2]{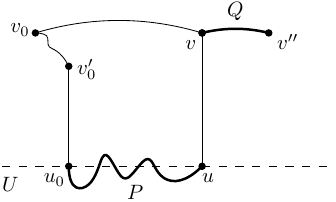}
  \caption{The paths $P$ and $Q$ in the proof of Claim \ref{C:move}.}
  \label{fig:pos}
\end{figure}

Observe that for any vertex $q\in Q$,
\begin{eqnarray*}
  d_H(q,U)&\ge& 
  d_H(v_0', U)-d_H(v_0',q)\\
  &\ge&
  d_H(u_0,v_0')-d_H(v_0',v_0)-d_H(v_0,v)-d_H(v,q)\\
  &\ge& \alpha s-\alpha s/4-s-\alpha s/40\\
  &\ge& (\tfrac{29}{40} \alpha-1)s\ge 55s.
\end{eqnarray*}
It follows that all vertices along the path $Q$ between $v$ and $v''$ are at distance more than $55s>32s$ from $U$, as in the definition of safe state 2. 
Moreover, as $|Q|\le \alpha s/40$ and $d_H(v'',U)\ge (\tfrac{29}{40} \alpha-1)s$, we indeed have $|Q|\le \tfrac14 d_H(v'',u)$, as desired. This shows that we can reach back safe state 2 in a finite number of moves. It only remains to check that rule \cref{r2} has been respected during these moves (we have already considered rule \cref{r1} above).

To prove that \cref{r2} also holds while $c$ moves along $Q$ (and $c_U$ moves along $P$), consider that $c$ lies at some vertex $q\in Q$ and $c_U$ lies in $p\in P$, and take any $r\in \pi(q)$. Observe first that
\begin{equation}\label{eq:2}
  d_H(q,v_0')\le d_H(q,v)+d_H(v,v_0)+d_H(v_0,v_0')\le \alpha s/40+s+\alpha s/4\le (11\alpha/40+1)s.
\end{equation}
We proved above that  $d_H(q,U)\ge (\tfrac{29}{40} \alpha-1)s\ge \alpha s/2$. Since $r\in U$, $d_H(q,r)\ge \alpha s/2$ and thus $\alpha s \le 2  d_H(q,r)$. We now obtain
\begin{eqnarray*}
  d_H(p,r) & \le & d_H(p,v_0')+d_H(v_0',q)+d_H(q,r)\\
           & \le & 2C^3(5\alpha/2+3)s+ \alpha s+(11\alpha/40+1)s+d_H(q,r)\\
           & \le & 7C^3\alpha s + d_H(q,r)\\
  & \le & (14C^3+1) d_H(q,r)\le 16C^3 d_H(q,r),
\end{eqnarray*}
where we have used $\alpha\ge 79\ge \tfrac{40\cdot 7}{29}$ and thus $(6+\tfrac{11}{40})\alpha +7\le 7\alpha$ in the third inequality.
This shows that \cref{r2} holds for any intermediate step, and concludes the first part of the claim.

\medskip

For the second part of the claim, assume that the cop $c$ is located on $v$, $c_U$ is located on $u\in U$, and the robber is located on $x\in U^+$. Assume that the robber is reachable from $c$, that is $d_H(x,v)\le \rho$.  Let $y\in \pi(x)$ and $w\in \pi(v)$. Since $x\in U^+$, $d_H(x,y)\le C$. By definition of $w$, $d_H(v,w)\le d_H(v,y)\le \rho+C$. On the other hand, it follows from   \cref{r2} that $d_H(u,w)\le 16C^3 d_H(v,w)\le 16C^3(\rho+C)$, and thus \[d_H(x,u)\le d_H(x,v)+d_H(v,w)+d_H(w,u)\le \rho+\rho+C+16C^3(\rho+C)=\rho_U.\] It follows that the robber is reachable from $c_U$ (with reach $\rho_U$), as desired. This concludes the proof of Claim \ref{C:move}.\end{proofofclaim}

Now that the robber has mapped every cop $c$ in $H$ to some virtual cop $c_U$ moving according to rules \cref{r1} and \cref{r2} in $H$, it remains to map $c$ to some virtual cop $c_G$ in $G$. As each cop $c_U$ finishes its moves in $U=h(V(G))$, it suffices to consider the location $u\in U$ of $c_U$ and map $c_G$ to any vertex $x\in h^{-1}(u)$. Assume that $c_U$ moves from $u$ to $v$ in a single round (we thus have $d_H(u,v)\le s_U$). Then $c_G$ moves from $x\in h^{-1}(u)$ to some $y\in h^{-1}(v)$. As $d_G(x,y)\le C s_U+C^2$, $c_G$ can indeed move from $x$ to $y$ in a single step if we set its speed to $s_G=C s_U+C^2$. We also set the reach $\rho_G$ of the cops $c_G$ in $G$ to $\rho_G=C\rho_U+C^2$. Note that if a vertex $x$ lies at distance more than $\rho_G$ from a cop $c_G$ in $G$, then $h(x)$ lies at distance more than $\rho_U$ from $c_U$ in $H$.

\medskip

It remains to describe the strategy of the robber.
Assume first we are interested in the strong version of the game. So the cops announce their speed $s$. The robber then computes $s_U$ and $s_G$ as above (both parameters only depend on $s$) and chooses its own speed $s_r$ in $G$ according to his winning strategy in $G$ against $t$ cops running at speed $s_G$. It then announces his speed in $H$ as $s^*=5C^3 s_r$. The cops then announce their reach $\rho$ in $H$. The robber computes $\rho_U$ and $\rho_G$ as above, and chooses a ball $B_G$ of bounded radius in $G$ in which it can come back infinitely often while travelling at speed $s_r$ and remaining at distance more than $\rho_G$ from the cops $c_G$ in $G$ (who travel at speed $s_G$). Whenever the robber is (virtually) located on some vertex $x$ in $G$, it is (really) located at the same time on $h(x)$ in $H$. So when the robber moves from $x$ to $y$ in $G$ we have $d_G(x,y)\le s_r$ and thus $d_H(h(x),h(y))\le Cs_r+C$. By Claim \ref{C:moveShadow}, there is a path of length $2C^2(C s_r+C)+C^3\le 5C^3 s_r$ in $U^+$ between $h(x)$ and $h(y)$. Since the  speed of the robber in $H$ is $s^*=5C^3 s_r$ the robber can move from $h(x)$ to $h(y)$ in a single step, while staying in $U^+$.

\medskip

Since the robber in $G$ remains at distance more than $\rho_G$ from the cops $c_G$, the robber in $H$ remains at distance more than $\rho_U$ from the cops $c_U$ in $H$, and thus at distance more than $\rho$ from the cops $c$ in $H$, by Claim \ref{C:move}.

\medskip

Note that as the robber in $G$ comes back infinitely often to the ball $B_G$ in $G$, the robber in $H$ comes back infinitely often in $h(B_G)$, which is contained in a ball of bounded radius in $H$. So the robber indeed has a winning strategy for the strong game in $H$ against $t$ cops. It follows that $\sco(G)\le \sco(H)$, as desired. The proof for the weak version is the same, with the slight difference that $s$ and $\rho$ are announced by the cops before the robber needs to announce its own speed. 
\end{proof}

\section{Strong cop numbers and hyperbolicity}\label{sec:hy}

\subsection{Hyperbolicity}

There are multiple equivalent definitions of hyperbolicity in graphs, and more generally in geodesic metric spaces. Informally, a graph is hyperbolic when its shortest path metric is close to a tree metric.

The following definition of hyperbolicity based on the notion of slim geodesic triangles is one of the most commonly used in the literature. 
For every three vertices $x,y,z\in V(G)$, a \emph{geodesic triangle} $\triangle(x,y,z)$ is a subgraph of $G$ obtained by taking the union of three geodesics $P_1, P_2, P_3$ going respectively from $x$ to $y$, from $y$ to $z$ and from $x$ to $z$. Given such a geodesic triangle $\triangle(x,y,z)$, we say that it is \emph{$\delta$-slim} for for some real $\delta\geq0$ if for every $i\in \{1,2,3\}$, every vertex $u\in V(P_i)$ is at distance at most $\delta$ in $G$ from $\bigcup_{j\ne i} P_j$.
We say that a (connected) graph $G$ is \emph{$\delta$-hyperbolic} if all its geodesic triangles are $\delta$-slim. We say that $G$ is \emph{hyperbolic} if there exists some $\delta\geq0$ such that it is $\delta$-hyperbolic. It is not hard to observe that hyperbolicity is preserved under taking quasi-isometries. This allows to extend the definition of hyperbolicy to finitely generated groups: a finitely generated group is \emph{hyperbolic} if one (or equivalently all) of its Cayley graphs is hyperbolic.

\smallskip

In his seminal work, Gromov \cite{Gromov_hyperbolic} gave a number of characterisations of hyperbolic graphs. Among them, he proved that a group is hyperbolic if and only if it is finitely presented and admits a linear isoperimetric inequality. Many proofs in \cite{Gromov_hyperbolic} extend to general geodesic spaces, and 
from a graph perspective, a sufficient condition implying linear isoperimetric inequality, and thus also hyperbolicity is the following, proved in \cite{Bowditch_hyperbolic}.

\begin{theorem}[Lemma 6.1.2 in \cite{Bowditch_hyperbolic}]
\label{thm: linear-isoperimetric}
Let $G$ be a graph and assume that there exists $D, K\geq 0$ such that for every cycle $C$ of $G$ of length at least $K$, there exists some subpath $P$ of $C$ of length at most $D$ with endpoints $x,y\in V(C)$, such that $d_G(x,y)<d_P(x,y)$. Then $G$ is hyperbolic.
\end{theorem}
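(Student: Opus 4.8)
This is a classical fact (it is stated as Lemma~6.1.2 in \cite{Bowditch_hyperbolic}), so in the paper one would simply cite it; but here is the argument I would give if I had to. The plan is to extract from the cycle-shortcut hypothesis a \emph{linear combinatorial isoperimetric inequality}, and then invoke Gromov's theorem that such an inequality forces hyperbolicity.

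First I would fix $L:=\max\{K,2D\}$ and form the $2$-complex $X$ obtained from $G$ by attaching one $2$-cell along every cycle of $G$ of length at most $L$. Since the attached cells have uniformly bounded perimeter, the inclusion $G\hookrightarrow X$ is a quasi-isometry, so it suffices to show that $X$ is hyperbolic. The key step is to prove that there is a constant $c=c(D,K)$ such that every closed walk $w$ in $G$ bounds a filling by at most $c\,|w|$ of the attached cells (in particular $X$ is simply connected). This I would prove by induction on $|w|$: a backtrack in $w$ is removed; if $w$ is not a simple cycle, a repeated vertex splits it into two strictly shorter closed walks to which induction applies; if $w$ is a simple cycle of length at most $L$ it is itself an attached cell; and if $w$ is a simple cycle of length greater than $L\ge K$, the hypothesis yields a subpath $P$ of $w$ with $|P|\le D$ whose endpoints $x,y$ satisfy $d_G(x,y)<d_P(x,y)=|P|$. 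Replacing $P$ by a geodesic $P'$ from $x$ to $y$ gives a closed walk $w'$ with $|w'|<|w|$, and the ``error'' closed walk traversing $P$ and then $P'$ backwards has length $|P|+d_G(x,y)<2D\le L$, hence fills with at most $2D$ cells; gluing this to an inductively obtained filling of $w'$, and taking $c$ large enough in terms of $D$ and $K$, yields the bound for $w$.

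Everything in the previous paragraph is elementary book-keeping --- it is exactly the standard ``Dehn presentation $\Rightarrow$ linear Dehn function'' argument, and the whole point of the hypothesis is that the shortcut subpath $P$ is \emph{more than half} of the $2$-cell it lives in (because $|P|>d_G(x,y)$). The hard part --- and the reason one imports this lemma rather than reproving it --- is the final implication: a simply connected $2$-complex with a sub-quadratic (in particular linear) isoperimetric function is Gromov-hyperbolic \cite{Gromov_hyperbolic}. Granting this, $X$ is hyperbolic, and since hyperbolicity is a quasi-isometry invariant, so is $G$. (An alternative but equally standard route avoids isoperimetry: the same shortcut argument shows that geodesic bigons in $G$ are uniformly thin, whereupon Papasoglu's theorem ``thin bigons $\Rightarrow$ hyperbolic'' finishes --- but that again rests on an imported deep result.)
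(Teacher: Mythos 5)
Your proposal is correct, and it matches the paper's treatment: the paper gives no proof of this statement at all, importing it verbatim as Lemma 6.1.2 of \cite{Bowditch_hyperbolic}, exactly as you anticipate. The sketch you supply (the shortcut-induction yielding a linear combinatorial isoperimetric inequality over the complex of short cycles, followed by Gromov's subquadratic-implies-hyperbolic theorem) is the standard proof of that cited lemma and is sound modulo the deep imported step you explicitly flag.
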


\subsection{Hyperbolicity and strong cop number}

In \cite[Theorem 3.2]{lee2023coarse}, the authors proved that every hyperbolic graph has strong cop number $1$. We prove that this is indeed an equivalence.

\begin{theorem}
 \label{thm: hyperbolic}
 For any connected graph $G$,  $\sco(G)=1$ if and only if $G$ is hyperbolic.
\end{theorem}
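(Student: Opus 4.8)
The plan is to prove only the forward implication, since the converse --- every hyperbolic graph has strong cop number $1$ --- is \cite[Theorem 3.2]{lee2023coarse}. I would prove the contrapositive: if $G$ is not hyperbolic then a single cop cannot win the strong game, so $\sco(G)\ge 2$. The entry point is \cref{thm: linear-isoperimetric}: since $G$ is \emph{not} hyperbolic, for every $D,K\ge 0$ there is a cycle $C=C_{D,K}$ of $G$ with $|C|\ge K$ such that every subpath of $C$ of length at most $D$ is a geodesic of $G$ (a long ``$D$-local-geodesic cycle''). The robber will patrol such a cycle, choosing $D$ and $K$ only after the cop has committed to all its parameters.

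Here is how the robber should play. In the strong game the cop first announces its speed $s_c$; the robber answers with a speed $s_r$ which is a fixed large multiple of $s_c$ (say $s_r=10s_c$; crucially this does \emph{not} need to depend on the cop's reach). The cop then announces its reach $\rho$. Now the robber chooses $D$ large compared with $s_c$ and $\rho$ (say $D\ge 100(s_c+\rho)$) and $K$ large compared with both $D$ and $\rho$, takes $C=C_{D,K}$ with $L:=|C|\ge K$, picks an arbitrary vertex $v\in C$, and declares the ball radius $R:=L$. Since the robber will never leave $C\subseteq B(v,R)$, protecting $B(v,R)$ is impossible for the cop as long as the robber is never captured; so it suffices to give the robber a strategy to evade a single cop forever while staying on $C$.

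For the evasion I would track a ``shadow'' of the cop on $C$. For round $i$ with cop position $c_i$, let $Z_i:=B(c_i,\rho)\cap C$ be the set of cycle-vertices currently forbidden to the robber. Using local-geodesicity to identify $G$-distances with $C$-distances at all scales $\le D$, one should establish: (a) whenever $Z_i\ne\emptyset$ it is contained in a sub-arc of $C$ of length $O(\rho)$; (b) from one round to the next this sub-arc drifts along $C$ by at most $O(s_c)$, with an \emph{absolute} constant (here one uses that $L\ge K\gg\rho$, so the cycle is far longer than the cop's reach, and that a subpath shortcut of length $\le D$ is forbidden, so the cop cannot ``teleport'' its threat from one region of $C$ to a distant one in few rounds); and (c) if $d_C(x,Z_i)\in[\rho+1,D]$ then $d_G(x,c_i)>\rho$. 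Granting these, the robber's rule is simply to maintain $d_C(r_i,Z_i)=D/2$, measured on one fixed side of $Z_i$ (and to stay put while the cop is far enough from $C$ that $Z_i=\emptyset$, stepping back to distance $D/2$ the moment $Z_i$ reappears, which it can by (b)). Since the forbidden arc drifts by only $O(s_c)$ per round while the robber moves $s_r=10s_c$ along $C$, this invariant can be kept forever; by (c) the robber is never within reach of the cop; and he never leaves $C\subseteq B(v,R)$. Hence one cop loses and $\sco(G)\ge 2$.

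The hard part will be items (a) and (b) of the shadow analysis, and in particular the (closely related) point that the cop cannot simultaneously threaten portions of $C$ that are far apart along $C$ --- i.e.\ that the chosen cycle has no ``short handles'' (pairs of cycle-vertices far apart along $C$ but at $G$-distance $O(\rho+s_c)$), so that $Z_i$ really is a single short arc whose motion is controlled. This is exactly where the order of quantifiers in the strong game bites, since a priori the cop may pick $\rho\gg s_r$; the resolution must exploit that $D$ and $K$ are chosen huge \emph{after} $\rho$, so that the local-geodesic structure of $C$ rules out shortcuts (and hence fast relocation of the threat) at every scale $\le D$. I would expect to need either a careful choice of the cycle $C$ (strengthening the output of \cref{thm: linear-isoperimetric} so as to avoid short handles), or a refinement of the robber's rule that keeps distance $D/2$ from each component of a possibly split $Z_i$ while arguing that each safe region between components is long and drifts slowly.
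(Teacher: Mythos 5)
Your setup matches the paper's: reduce to the forward implication via \cite[Theorem 3.2]{lee2023coarse}, negate Theorem~\ref{thm: linear-isoperimetric} to extract, for every $D$ and $K$ chosen \emph{after} the cop's parameters, a cycle $C$ of length at least $K$ all of whose subpaths of length at most $D$ are geodesics, and have the robber live on $C$ forever at a speed that is a fixed multiple of $s_c$. The gap is in the evasion strategy, and it is exactly the one you flag: claims (a) and (b) of your shadow analysis are false in general. Local geodesicity at scale $D$ says nothing about pairs of vertices of $C$ at $C$-distance greater than $D$; such pairs may be at $G$-distance $O(1)$, so the cycle can have arbitrarily bad ``handles'', $Z_i=B(c_i,\rho)\cap C$ may consist of several arcs far apart along $C$, and a single cop move of length $s_c$ can make a component of the threatened set appear at a $C$-location very far from the previous one. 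Neither of your proposed repairs is routine: producing handle-free cycles would mean strengthening Theorem~\ref{thm: linear-isoperimetric} itself, and a multi-component drift invariant is not supplied. As written the proof is therefore incomplete.

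The paper sidesteps the shadow entirely by measuring safety in $G$-distance to the cop rather than $C$-distance to $Z_i$, and by making the robber reactive instead of maintaining a positional invariant. With $s_r=100s$, call the robber \emph{safe} if $d_G(c,r)\ge 10\rho s+s$; he stays put while safe. When safety is first violated he still has $d_G(c,r)\ge 10\rho s$. He then considers only the subpath $P$ of $C$ of length $200\rho s\le D$ centred at his current position, which \emph{is} a geodesic; a short triangle-inequality computation shows the cop cannot be within $\rho s+\rho$ of both halves $P^-,P^+$ of $P$ (otherwise two points of the geodesic $P$ on opposite sides of the robber, each at distance about $9\rho s$ from him, would be at mutual distance at most $2\rho s+2\rho$), so one half, say $P^+$, stays at $G$-distance more than $\rho$ from the cop for the next $\rho$ rounds. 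The robber sprints $100\rho s$ along $P^+$; since that subpath is a geodesic he ends at genuine $G$-distance $100\rho s$ from his old position, hence at distance at least $100\rho s-(10\rho s+s)-\rho s\ge 10\rho s+s$ from the cop, and is safe again. All geodesicity is used at scale $200\rho s$ and the recovery of safety is a pure triangle inequality, so no control of the global interaction between the cop and $C$ is needed. Replacing your ``maintain $d_C(r_i,Z_i)=D/2$'' rule by this wait-then-sprint rule repairs the argument.
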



\begin{proof}
By \cite[Theorem 3.2]{lee2023coarse} we only need to show that if $\sco(G)=1$ then $G$ is hyperbolic. 
Assume for the sake of contradiction that $G$ is not hyperbolic, then by Theorem \ref{thm: linear-isoperimetric}, for every $t$, there is a cycle $C$ of length at least $2t$ in which all subpaths of length $t$ are geodesic in $G$.

We now describe a winning strategy for the robber against a single cop. This will show that $\sco(G)>1$, as desired. 

\begin{itemize}
    \item The cop chooses some speed $s_c=s\ge 1$.
    \item The robber chooses speed $s_r=100s$.
    \item The cop chooses its reach $\rho$.
    \item The robber chooses a cycle $C$ in $G$ of length at least $400\rho s$ such that all subpaths of length $200\rho s$ are geodesic, and then defines $R$ as the radius of $C$ (the goal of the robber will be to remain in $C$ forever).
\end{itemize}

We say that the robber is \emph{safe} if the distance $d_G(c,r)$ between the cop and the robber in $G$ is at least $10\rho s+s$. The strategy of the robber is the following. It starts on $C$, in a safe position (this is possible since otherwise $C$ would have radius at most $10\rho s+ s$, which would contradict the fact that it contains geodesic paths of length $200\rho s$). As long as the robber is safe, he does not move. If the robber is not safe anymore after the cop's move, it means that $d_G(c,r)< 10\rho s+s$. Note that since the robber was safe at the previous round and the cop only moves at speed $s$, we also have $d_G(c,r)\ge 10\rho s$. 

Let us denote by $i$ the current round, and let $c_i$ and $r_i$ denote the positions of the cop and the robber at round $i$. During $\rho$ consecutive rounds, the robber is now going to move along $C$ to reach  a vertex located at distance $100\rho s$ from its current location (this is possible, since the speed of the robber is $100s$, and the subpath of $C$ between it current location and its destination is a geodesic). In order to decide in which direction to go (left or right), the robber considers the distance $d_G(c_{i},C)$ between the cop and the cycle $C$ at round $i$.

If $d_G(c_{i},C)>\rho s+\rho$, then the robber can go either left or right (it does not matter). Indeed, in $\rho$ rounds the cop will only be able to reach a vertex at distance at most $\rho s$ from its  location $c_{i}$ at round $i$, so it will remain at distance greater than $\rho$ from $C$ during the $\rho$ rounds, unable to reach the robber (which remains on $C$ the whole time).
 
Assume now that $d_G(c_{i},C)\le \rho s+\rho$. Let $x^-,x^+$ be the two endpoints of the subpath $P$ of $C$ of length $200\rho s$ centered in the position $r_{i}$ of the robber at round $i$, and let $P^-$ and $P^+$ be the two subpaths of $P$ between $x^-$ and $r_{i}$, and between $r_{i}$ and $x^+$, respectively. Note that by definition of $C$, $P$ and its two subpaths are geodesics. We claim that $c_{i}$ cannot be at distance at most $\rho s+\rho$ from both $P^-$ and $P^+$: indeed if there were vertices $v^-\in P^-$ and $v^+\in P^+$ at distance at most  $\rho s+\rho$ from $c_{i}$, then \[10\rho s\le d_G(c_{i},r_{i})\le d_G(c_{i},v^-)+d_G(v^-,r_{i})\le d_G(v^-,r_{i})+\rho s+\rho, \] and thus $d_G(v^-,r_{i})\ge 9\rho s-\rho$, and similarly $d_G(v^+,r_{i})\ge 9\rho s-\rho$. Since $P^-\cup P^+$ is a geodesic, and $v^-,r,v^+\in C$, $d_G(v^-,v^+)=d_G(v^-,r_{i})+d_G(v^+,r_{i})\ge 18\rho s-2\rho$. This contradicts the fact that $d_G(v^-,v^+)=d_G(v^-,c_{i})+d_G(v^+,c_{i})\le 2\rho s +2\rho$, since $s \ge 1$. This shows that the cop remains at distance more than $\rho$ from 
one of $P^+$ and $P^-$, say $P^+$ by symmetry, during the $\rho$ rounds following round $i$. In particular the robber can travel along $P^+$ to reach $x^+$ (at distance $100\rho s$ from the location $r_{i}$ of the robber at round $i$), without being captured by the cop. 

In both cases, at round $i+\rho$, the robber is on $C$, at distance $100\rho s$ from its location $r_{i}$ at round $i$. The cop and the robber were at distance at most $10\rho s+s$ at round $i$, and the cop has only been able to travel a distance at most $\rho s$ between rounds $i$ and $i+\rho$. It follows that at round $i+\rho$, the cop and robber are at distance at least $100\rho s-(10\rho s+s)-\rho s=89\rho s-s$. Since $\rho\ge 1$, this distance is at least $10\rho s+s$ and thus the robber is safe at round $i+\rho$. The robber can then wait until it is not safe anymore, and repeat forever the same procedure as above.
\end{proof}

\section{Weak cop numbers and asymptotic minors}\label{sec:am}

In Subsection \ref{S:minors}, we prove that every infinite connected graph which admits some finite graph of treewidth at least $k$ as an asymptotic minor has weak cop number at least $k+1$. We observe in Subsection \ref{S:wcn1} that it implies that an infinite connected graph has weak-cop number $1$ if and only if it is quasi-isometric to a tree. In Subsection \ref{S:virtually-free} we deduce that a locally finite quasi-transitive graph has bounded treewidth if and only if it has weak cop number $1$. In particular, the finitely generated groups with weak cop number $1$ are exactly the virtually free groups, giving a positive answer to \cite[Question 1.4]{Cornect_Martinez24}.

\subsection{Asymptotic minors and weak cop number}
\label{S:minors}

For every $D\geq 1$ and every graphs $H,G$, a \emph{$D$-fat $H$-minor} of $G$ consists in a collection $(C_x)_{x\in V(H)}$ of non-empty connected subgraphs of $G$, together with a collection $(P_e)_{e\in E(H)}$ of paths of $G$ such that
\begin{itemize}
 \item for every distinct vertices $x, y\in V(H)$, $d_G(C_x, C_y)\geq D$;
 \item for every edge $xy\in E(H)$, $P_{xy}$ is a path connecting a vertex from $C_x$ to a vertex of $C_y$, and which is internally disjoint from $\bigcup_{z\in V(H)}C_z$;
 \item for every $x\in V(H)$ and every $yz\in E(H)$ such that $x\notin \{y,z\}$, $d_G(C_x, P_{yz})\geq D$;
 \item for every distinct edges $xy, x'y'\in E(H)$, $d_G(P_{xy}, P_{x'y'})\geq D$.
\end{itemize}

If $G$ admits a $D$-fat $H$-minor for every $D\geq 1$, then we say that $H$ is an \emph{asymptotic minor} of $G$, and we write $H\preccurlyeq_{\infty} G$.
As noticed in \cite{GP23}, the property of having a fixed graph $H$ as an asymptotic minor is invariant under  quasi-isometry.

\medskip

In a graph $G$, we say that two subsets $X,Y\subseteq V(G)$ \emph{touch} if they intersect or if there is an edge $xy\in E(G)$ with $x\in X$ and $y\in Y$.
A \emph{haven} of order $k$ in a graph $G$ is a function $\beta$ which maps every subset $X\subseteq V(G)$ of less than $k$ vertices to a non-empty connected component of $G-X$, such that for every two sets $X,Y\subseteq V(G)$ of less than $k$ vertices, $\beta(X)$ and $\beta(Y)$ touch. An important property of havens is their monotonicity: if $X\subseteq Y$ are two sets of less than $k$ vertices, then $\beta(Y)\subseteq \beta(X)$ (see \cite{AST90} for more details on monotonicity in havens).

\smallskip

For a finite graph $G$, we denote by $\mathrm{bn}(G)$ the maximum order of a haven in $G$. The following result of Seymour and Thomas \cite{ST93} shows how $\mathrm{bn}(G)$ is related to the treewidth $\tw(G)$ for any finite graph $G$.

\begin{theorem}[\cite{ST93}]
\label{thm:bramble}
 For every finite graph $G$, we have
 $\mathrm{bn}(G)=\tw(G)+1$.
\end{theorem}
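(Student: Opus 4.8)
The plan is to prove the two inequalities $\mathrm{bn}(G)\le \tw(G)+1$ and $\mathrm{bn}(G)\ge \tw(G)+1$ separately, after first recording that $\mathrm{bn}(G)$ (defined here via havens) equals the maximum order of a \emph{bramble} of $G$, i.e.\ of a family of pairwise touching connected subsets of $G$ whose order is the least size of a set meeting all of them. In one direction, a haven $\beta$ of order $m$ gives the bramble $\{\beta(X):|X|<m\}$: its members are connected (components of $G-X$) and pairwise touch by the haven axioms, and no set $Y$ with $|Y|<m$ meets all of them because $Y$ cannot meet $\beta(Y)$, which is a component of $G-Y$. Conversely a bramble $\mathcal B$ of order $m$ gives a haven of order $m$: for $|X|<m$ the set $X$ is not a hitting set, so some member of $\mathcal B$ avoids $X$, all members avoiding $X$ are connected and pairwise touching and hence lie in a single component of $G-X$, which we take as $\beta(X)$; the touching condition for $\beta$ is inherited from that of $\mathcal B$.

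For $\mathrm{bn}(G)\le \tw(G)+1$, I would fix a tree-decomposition $(T,\mathcal V)$ of width $\tw(G)$ and an arbitrary bramble $\mathcal B$, and show that some bag $V_t$ meets every member of $\mathcal B$; then the order of $\mathcal B$ is at most $|V_t|\le \tw(G)+1$, and taking the supremum over $\mathcal B$ finishes. To find such a bag, orient each edge $e$ of $T$ as follows: for the edge-separation $(V_1,V_2)$ associated to $e$, every member of $\mathcal B$ meets $V_1$ or every member meets $V_2$ (two members on the two strict sides of a separation would fail to touch), and we orient $e$ toward whichever side all members meet. Since $T$ is a finite tree this orientation has a sink $t$. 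If some $B\in\mathcal B$ avoided $V_t$, then $\{s\in V(T):V_s\cap B\ne\emptyset\}$ would be a nonempty subtree of $T$ not containing $t$, and the first edge on the path from $t$ to it — oriented away from $t$ since $t$ is a sink — would by the orientation rule force $B$ to meet the union of bags on the $t$-side of that edge, contradicting the fact that this subtree lies entirely on the other side.

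For $\mathrm{bn}(G)\ge \tw(G)+1$ it is equivalent to show: if $G$ has no bramble of order $>k$, then $\tw(G)\le k-1$ (applied with $k=\mathrm{bn}(G)$). Here I would construct a tree-decomposition of width $\le k-1$ recursively, proving by induction on a suitable measure the statement ``for every $W\subseteq V(G)$ with $|W|\le k$, $G$ has a tree-decomposition of width $\le k-1$ with a bag containing $W$'', the case $W=\emptyset$ being the goal. When $|W|<k$ one adjoins a vertex of $V(G)\setminus W$ to $W$ and recurses; when $|W|=k$ and $V(G)\setminus W\ne\emptyset$ one has to produce a separation $(A,B)$ of order $\le k$ with $W\subseteq A$ that genuinely splits $G$, recurse on $G[A]$ and $G[B]$ with the separator as distinguished set, and glue the resulting decompositions along copies of the separator under a central bag. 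The no-large-bramble hypothesis is precisely what guarantees that such a splitting separation exists at every step: an ``unsplittable'' piece would carry a bramble of order $>k$.

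The main obstacle is this last direction — the Seymour–Thomas argument — and within it the bookkeeping: choosing the splitting separations so that the recursion strictly decreases and terminates (the delicate case being when $G-W$ has a single large component, where naively swapping vertices into $W$ achieves nothing), and checking that the family of bags assembled from the recursive calls genuinely satisfies the connectivity and edge-coverage axioms of a tree-decomposition. By comparison the forward inequality and the haven–bramble dictionary are routine.
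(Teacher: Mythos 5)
First, a remark on the comparison you asked for: the paper does not prove this statement at all. It is the Seymour--Thomas tree-width duality theorem, imported verbatim from \cite{ST93} as a black box, so there is no in-paper argument to measure yours against. On the merits, your haven--bramble dictionary is correct, and your proof of $\mathrm{bn}(G)\le \tw(G)+1$ (orient each tree edge toward the side of the edge-separation that every bramble element meets, take a sink $t$, and observe that a bramble element avoiding $V_t$ would be trapped on the far side of an edge oriented toward $t$) is complete and is the standard argument for the easy direction.

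The hard direction, however, has a genuine gap, and it is not just ``bookkeeping''. The inductive statement you propose --- \emph{for every $W\subseteq V(G)$ with $|W|\le k$, $G$ has a tree-decomposition of width $\le k-1$ with a bag containing $W$} (under the hypothesis that every bramble of $G$ has order $\le k$) --- is false. Take $G$ to be the path $v_1v_2v_3$ and $k=2$: every bramble of $G$ has order at most $2$, yet there is no tree-decomposition of width $1$ with a bag containing $W=\{v_1,v_3\}$. Indeed, such a decomposition would have, among its nodes, one whose bag is $\{v_1,v_3\}$, one whose bag contains the edge $v_1v_2$ and one whose bag contains $v_2v_3$; the three subtrees of nodes whose bags contain $v_1$, $v_2$, $v_3$ respectively would then pairwise intersect, hence by the Helly property of subtrees of a tree they would share a node, forcing a bag of size $3$. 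So the induction cannot be run over arbitrary sets $W$ of size at most $k$: the set that is forced into a bag must be constrained \emph{relative to a bramble}. This is precisely where Seymour--Thomas (and the Bellenbaum--Diestel streamlining) differ from your sketch: they carry a bramble $\mathcal B$ through the induction, prove that $G$ admits a tree-decomposition in which every bag of size $>k$ fails to cover $\mathcal B$ (inducting on how much $\mathcal B$ can still be enlarged, with $\mathcal B=\emptyset$ giving the theorem), and the distinguished set is always a minimum-size cover of the current bramble, never an arbitrary $k$-set. Your remaining worries --- termination of the recursion and the existence of a splitting separation, with the slogan that an unsplittable piece carries a large bramble --- are symptoms of the same missing relativization; they resolve only once the induction is set up relative to $\mathcal B$. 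As the statement is quoted in the paper purely as a known result, the appropriate move here is simply to cite \cite{ST93} rather than to reprove it.
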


We say that a subset $X$ of vertices of a graph $G$ is \emph{connected} if the subgraph $G[X]$ of $G$ induced by $X$ is connected. Note that a haven $\beta$ of order $k$ in a graph $G$ maps subsets $X\subseteq V(G)$ of order less than $k$ to non-empty connected  subsets $\beta(X)$ of $V(G)$.

\medskip

Our main result in this section is the following.

\begin{theorem}
 \label{thm: minor}
 Let $H$ be a finite graph, and let $G$ be any connected graph such that $H\preccurlyeq_{\infty}G$. Then 
 $$\wco(G)\geq \tw(H).$$
\end{theorem}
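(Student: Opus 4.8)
The plan is to show that if $H$ is a finite graph with $\tw(H)=k-1$, so by Theorem \ref{thm:bramble} there is a haven $\beta$ of order $k$ in $H$, then the robber can win against $k-1$ cops in the weak game on $G$. First I fix the cops' speed $\scop=s$ and reach $\rho$; the robber must then produce, for each choice of $s$ and $\rho$, a value of $\sr$ and a ball that he can revisit forever. The key idea is that a $D$-fat $H$-minor of $G$, with $D$ chosen large compared to $s$ and $\rho$ (say $D = 10(s+\rho)+10$), gives a ``coarse copy'' of $H$ inside $G$: the branch sets $C_x$ and the connecting paths $P_e$ are pairwise at distance $\geq D$, so a single cop, moving at speed $s$ with reach $\rho$, can ``block'' at most one of these objects per round in a robust sense — if a cop is within distance $\rho+s$ of some $C_x$ or $P_e$ at the end of a round, it cannot be within that distance of any other such object. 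Since there are only $k-1$ cops, at every round at least one branch set is ``clean''. The robber will always sit inside some branch set $C_x$; when he needs to move, he travels along the appropriate connecting path $P_{xy}$ to a neighbouring clean branch set $C_y$. Because the $P_e$ are internally disjoint from the branch sets and far from all the other branch sets and paths, the only cop that could possibly interfere with such a traversal is one that is already close to $C_x$, to $C_y$, or to $P_{xy}$ itself — and we will argue this can be avoided.

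The heart of the argument is the use of the haven $\beta$ to choose, at each round, which neighbouring branch set to flee to. I would maintain the invariant that the robber occupies a vertex of $C_x$ for some $x \in V(H)$ such that $\beta$, evaluated on the set $S$ of vertices of $H$ whose corresponding objects (branch sets or incident connecting paths) are currently ``threatened'' by a cop, contains $x$; here $|S| \le k-1$ since each of the $k-1$ cops threatens the objects associated with at most one vertex of $H$ (by the distance-$D$ separation). When the cops move and a new vertex $x$ becomes threatened, the robber looks at the new threatened set $S'$ (still of size $\le k-1$), takes $y = \beta(S')$, and flees from $C_x$ to $C_y$. The haven axiom that $\beta(S)$ and $\beta(S')$ touch — combined with monotonicity of havens and the fact that $S, S' $ differ by controlled amounts — guarantees that $x$ and $y$ are equal or adjacent in $H$, so the connecting path $P_{xy}$ exists; and since $y = \beta(S')$, the target branch set $C_y$ is in $G - (\text{threatened region})$, hence clean at least for the duration of the short traversal. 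The robber sets $\sr$ large enough to cross any single path $P_e$ plus the diameter of the (finitely many) branch sets $C_x$ in one round — a finite quantity, since $H$ is finite and all the $C_x$, $P_e$ are finite subgraphs (this is where finiteness of $H$ is used crucially; without it the relevant diameters and the size of $S$ would be unbounded). The ball the robber protects is, say, a ball of radius $R$ large enough to contain the union of all $C_x$ and $P_e$ in the fixed $D$-fat model (again finite), and he revisits it trivially since he never leaves it.

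The main obstacle I anticipate is making the fleeing move genuinely safe for the whole round, not merely at its endpoints: when the robber walks from $C_x$ along $P_{xy}$ into $C_y$, I need every intermediate vertex of that walk to stay at distance $> \rho$ from every cop, as required by the rules of the game (the robber's path must avoid the $\rho$-neighbourhoods of the cops' new positions). This forces a careful bookkeeping of which cop can be how close to $P_{xy}$: a cop threatening $x$ could sit near one end of $P_{xy}$, and a cop threatening $y$ near the other end, so I must argue using the length of $P_{xy}$ and the speed bound that the robber can ``outrun'' these — essentially, the robber commits to the traversal over several rounds if needed, using the distance-$D$ gaps and a potential/safety argument (``the robber is safe if he is deep inside a clean branch set'') analogous to the one in the proof of Theorem \ref{thm: hyperbolic}. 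A secondary subtlety is that the threatened set $S$ can change between the cops' move and the robber's response, so I need $\beta$ to be evaluated consistently and to control the ``drift'' of $S$ over the rounds of a single traversal; handling this cleanly, rather than with ad hoc case analysis, is the part of the write-up that will require the most care. Everything else — choosing $D$, $\sr$, $R$ in terms of $s$, $\rho$, and the fixed data of $H$ and one $D$-fat model — is routine once the safety-during-traversal lemma is in place.
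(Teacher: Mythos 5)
Your overall architecture is the same as the paper's: fix $s_c,\rho$, take a $D$-fat $H$-minor with $D$ large compared to $s_c+\rho$ so that each cop can "threaten" at most one vertex of $H$, and steer the robber through the model using a haven of order $\tw(H)+1$ in $H$, with $s_r$ and the ball determined by the finite model. But there is a genuine gap at exactly the step you flag as your main obstacle, and the fix you propose cannot work. The problem is not one of timing or speed: by the rules of the game, if after the cops' move some cop sits within distance $\rho$ of a vertex of the connecting path $P_{xy}$, the robber cannot traverse $P_{xy}$ at \emph{any} speed, because every vertex of his walk must stay at distance more than $\rho$ from every cop position. A cop whose assigned vertex is $x$ (the vertex the robber currently occupies, which has just dropped out of the haven) may legitimately park within reach of $P_{xy}$, and nothing in the haven axioms prevents the touching edge between $\beta(S)$ and $\beta(S')$ from being blocked in this way. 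So neither "outrunning" nor committing to a multi-round traversal helps; the robber needs a combinatorially different route. Supplying that route is the technical heart of the paper's proof: it classifies the escape edges out of $\beta(A_H)$ that are \emph{guarded} by a cop, orders them as $e_1=u_1v_1,\dots,e_s=u_sv_s$, and walks the robber through a chain of intermediate havens $F_j=\beta(U_j\cup Z_H)^G$ with $U_j=\{u_1,\dots,u_j\}\cup\{v_j,\dots,v_s\}$, proving (Claims \ref{clm: C1}--\ref{clm: nextCi}) that consecutive members of the chain are joined by an unguarded path. This idea is absent from your plan, and without it the argument does not close.

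Two smaller points. First, you have an off-by-one: to prove $\wco(G)\geq\tw(H)$ you only need the robber to beat $\tw(H)-1$ cops, whereas you aim to beat $\tw(H)$ of them. Besides being more than required, the stronger claim is out of reach of this method: the guarded-edge bookkeeping requires evaluating the haven on sets of size up to (number of cops) plus the number of guarded edges plus one, and a haven of order $\tw(H)+1$ supports exactly $\tw(H)-1$ cops under this accounting. Second, the multi-round traversal and the attendant "drift" of the threatened set are unnecessary complications: since $H$ is finite and the branch sets may be taken finite, one can set $s_r$ equal to the length of a longest path in $G[C]$, where $C$ is the union of all branch sets and connecting paths, so that any cop-free route inside the model is traversed in a single robber move. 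This is how the paper disposes of your "secondary subtlety" entirely.
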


\begin{proof}
We set $t:= \tw(H)$, and show that the robber always has a winning strategy in the weak variant of the Cops and robber game against $t-1$ cops. Up to considering a connected component of $H$ with the same treewidth, we can assume in the remainder of the proof that $H$ is connected. 
By Theorem \ref{thm:bramble}, there exists a heaven $\beta$ of order $t+1$ in $H$.

\smallskip
 
 Assume that the $t-1$ cops initially choose speed $s_c$ and reach $\rho$. We set $D:=2\cdot (s_c+\rho+1)$, and let $((C_u)_{u\in V(H)}, (P_e)_{e\in E(H)})$ be a $D$-fat $H$-minor in $G$.
 The main goal of the robber will be to map the position of each cop in $G$ to some virtual position in $H$ (depending to their respective distances to the different elements of the fat $H$-minor in $G$), apply a natural strategy to (virtually) evade the cops in $H$ using the haven $\beta$, and then map this back to a real position in $G$. This motivates the following definitions.

 \medskip

 For any subset $U\subseteq V(H)$, we let
 $$U^G:=\left(\bigcup_{u\in U}C_u \right)\cup \left(\bigcup_{e\in H[U]}V(P_{e})\right),$$
 and we write $C:=V(H)^G$ (that is, $C$ consists of all the vertices of the different elements $C_x$ and $P_e$ of the  $D$-fat $H$-minor in $G$).
 As $H$ is finite, note that we may assume that every $C_u$ is finite, hence all subsets $U^G\subseteq V(G)$ are also finite. Observe also that by definition of a fat minor, for any non-empty connected subset $U\subseteq V(H)$, $U^G\subseteq C$ is a non-empty connected subset of $V(G)$.
 
 \smallskip
 
 The robber defines his speed $s_r$ as the length of a longest path of $G[C]$ (note that since we assumed that $H$ is connected, it follows from the definition of a fat minor that $C$ induces a connected subgraph of $G$), and 
 chooses any ball of finite radius in $G$ that contains $C$. We will show that the robber has a winning strategy in which he remains in $C$ at each step. Note that by definition of $s_r$, the robber can travel in a single round along any path from $G[C]$ that lies at distance more than $\rho$ from all the cops.
 
 \medskip

 We now explain how the robber can map vertices of $G$ to some virtual position in $H$.
 For every vertex $x\in V(G)$, we define  $x_H\subseteq V(H)$ as follows. We start with $x_H=\emptyset$ for any $x\in V(G)$, and 
 \begin{itemize}
  \item for any $u\in V(H)$  such that 
  $d_G(x,C_u)\leq \frac{D}{2} -1$, we add $u$ to the set $x_H$;
  \item if $x$ is at distance  more than $\frac{D}{2}-1$ from any set $C_u$ in $G$, for any  $e\in V(H)$ such that $d_G(x,P_e)\leq \frac{D}{2} -1$, we add  one of the two endpoints of $e$ in $H$ to $x_H$.
  \end{itemize}

\begin{claim}
 \label{clm: unique_branch}
 For every $x\in V(G)$, $|x_H|\leq 1$.
\end{claim}

\begin{proofofclaim}
  This immediately follows from the fact that for every two distinct vertices $u,v\in V(H)$, and every two distinct edges $e,f\in E(H)$, we have $d_G(C_u, C_v)\geq D$, $d_G(P_e, P_{f})\geq D$ and $d_G(C_u, P_e)\geq D$ if $e$ is not incident to $u$.
\end{proofofclaim}

For every subset $X\subseteq V(G)$, we set $X_H:=\bigcup_{x\in X}x_H$. Note that by Claim \ref{clm: unique_branch}, $|X_H|\le |X|$ for every subset $X\subseteq V(G)$.  

\begin{claim}
  \label{clm: dist}
  Consider two subsets $X\subseteq V(G)$ and $U\subseteq V(H)$ such that $U\cap X_H=\emptyset$. Then $d_G(X,U^G)\ge D/2=s_c+\rho+1$, and in particular for any $u\in U$ we have $d_G(X,C_u)>s_c+\rho$.
 \end{claim}
 
 \begin{proof}
  Assume for the sake of contradiction that some vertex $y\in U^G$ lies at distance at most $D/2-1$ from  some $x\in X$. By definition of $U^G$, $y$ must be in some $C_u$ with $u\in U$, or in some $P_{uv}$ with $u,v\in U$, so there exists a vertex $u\in U$ such that $x_H=\{u\}$, and in particular $u\in X_H$. It follows that $u\in U\cap X_H\ne \emptyset$, a contradiction.
 \end{proof}

For every subset $X\subseteq V(G)$ of size at most $t$, we set $X^\perp:=\beta(X_H)^G$. The next claim shows that $X^\perp$ is a connected subset of vertices of $G$ that is sufficiently far from $X$.
 
\begin{claim}
  \label{clm: bramble}
  For every subset $X\subseteq V(G)$ of size at most $t$, $X^\perp\subseteq C$ is a non-empty connected subset of $V(G)$ which lies at distance at least $D/2=s_c+\rho+1$ from $X$ in $G$.
 \end{claim}

 \begin{proofofclaim}
 Note that as $|X|\le t$, we have $|X_H|\le t$ and thus $U:=\beta(X_H)$ is a non-empty connected component of $H-X_H$, so $U\cap X_H=\emptyset$. As $U$ is non-empty and connected in $H$, $X^\perp=U^G$ is non-empty and connected in $G$. The property that $d_G(X,U^G)\ge D/2$ follows directly from Claim \ref{clm: dist}.
 \end{proofofclaim}

Assume that the starting positions of the cops are given by a set $Y_0\subseteq V(G)$ of size at most $t-1$. The robber then starts at an arbitrary vertex $r_0\in Y_0^\perp$. Note that  by Claim \ref{clm: bramble}, $Y_0^\perp\subseteq C$ is a non-empty connected subset of $V(G)$ lying at distance at least $s_r+\rho+1$ from $Y_0$. At the end of each round $i\ge 1$ we denote the set of vertices occupied by the cops by $Y_i$ and the location of the robber by $r_i$. Observe that $|Y_i|\le t-1$ for every $i\ge 0$.

\medskip

We now prove the following by induction on $i\ge 0$:

\begin{itemize}
\item[$(*)$] \emph{the robber can stay in $G[C]$ forever without being caught by the cops, and moreover for every $i\ge 0$, $r_i\in Y_i^\perp\subseteq C$.}
\end{itemize}

As $C$ is contained within a ball of bounded radius, this readily implies that the robber has a winning strategy against $t-1$ cops in the weak game, and thus $\wco(G)\geq t=\tw(H)$, as desired. So it only remains to prove $(*)$.

\medskip

By the definition of $r_0$, $(*)$ holds for $i=0$. We can thus assume that $i\ge 1$. We now write $A=Y_{i-1}$ and $B=Y_i$ to simplify the notation.

\medskip

Consider the situation of the robber at round $i\ge 1$, just after the cops have moved from $A=Y_{i-1}$ to their current position in $B=Y_i$. The robber occupies the vertex $r_{i-1}\in A^\perp$ and wants to move  to $r_i\in B^\perp$. We say that the robber is \emph{safe} if it stays at distance more than $\rho$ from $B$ (so out of reach from the cops).

\begin{claim}\label{cl:yi-1}
We have $d_G(B,A^\perp)>\rho$, and in particular the robber can travel safely in $G[C]$ to any vertex of $A^\perp$.
\end{claim}

\begin{proof}
Recall that by Claim \ref{clm: bramble}, $A^\perp\subseteq C$ is a non-empty connected subset of $V(G)$ which lies at distance at least $s_c+\rho+1$ from $A$ in $G$. Since the cops travel at speed $s_c$, $d_G(A^\perp,B)\ge \rho+1$ and thus the robber can travel safely within $A^\perp\subseteq C$ to reach any vertex of $A^\perp$.
\end{proof}

Recall that $A^\perp=\beta(A_H)^G$ and $B^\perp=\beta(B_H)^G$. By definition of a haven, $\beta(A_H)$ and $\beta(B_H)$ touch in $H$. If they have a vertex in common (in $H$), then $A^\perp$ and $B^\perp$ also have a vertex in common (in $G$), and the property that $(*)$ holds at the end of round $i$ follows from Claim \ref{cl:yi-1}. So we can assume that
$\beta(A_H)$ and $\beta(B_H)$ are disjoint, and thus by definition of a haven there is an edge connecting $\beta(A_H)$ to $\beta(B_H)$ in $H$. 

\smallskip

The goal of the robber will now be to escape from $A^\perp$. What would prevent him to do so is if all the escape routes from $A^\perp$ were blocked by some cop. So let us say that an edge $uv\in E(H)$ and the corresponding path $P_{uv}$ in $G$ are  \emph{guarded} if 
\begin{itemize}
    \item $u\in \beta(A_H)$;
    \item $v\notin \beta(A_H)$;
    \item there exists $y\in B$ such that $d_G(y,P_{uv})\le \rho$ and $u\in y_H$ (and thus, by Claim \ref{clm: unique_branch}, $y_H=\sg{u}$).
\end{itemize}

Let $e_1, \ldots,e_s$ denote the edges of $H$ that are guarded.  For every $1\le j \le s$, we write $e_j=u_jv_j$, where $u_j\in \beta(A_H)$ and $v_j\notin \beta(A_H)$. Let $Z\subseteq B$ be the set of vertices occupied by the cops that do not guard any path $P_{e_j}$. As there are $t-1$ cops, and each cop can guard at most one path, we have $s+|Z|\le t-1$.

\begin{claim}
  \label{clm: bad-free}
  For every vertex $v\in V(H)$ and every $y\in B-Z$, we have $d_G(y, C_v)>\rho$. 
 \end{claim}
 
 \begin{proofofclaim}
  Assume for a contradiction that $d(y, C_v)\leq \rho$ for some $v\in V(H)$ and $y\in B-Z$, and let $e\in E(H)$ be the edge guarded by some cop occupying $y$. Note that we then have $y_H=\sg{v}$.
  As $P_e$ is at distance at least  $D>2\rho$ from every set $C_{w}$ such that $w$ is not an endpoint of $e$, $v$ must be an endpoint of $e$. By Claim \ref{cl:yi-1}, $d_G(B,A^\perp)>\rho$ and thus $v\notin \beta(A_H)$. By the definition of a guarded edge, this implies that $y_H\ne \{v\}$, a contradiction. 
  \end{proofofclaim}

For any $1\le j \le s$, we define \[U_j:=\{u_1,\ldots, u_j\}\cup \{v_j,\ldots, v_s\}.\] Note that each set $U_j$ contains at most $s+1$ vertices and thus $|U_j\cup Z_H|\le t$ for any $1\le j \le s$. We now show that for any $j=1,\ldots,s$, the robber can travel safely to \[F_j:=\beta(U_j\cup Z_H)^G\subseteq C.\] 
As $U_{s}\cup Z_H \supseteq \{u_j:1\le j \le s\}\cup Z_H=B_H$, we have  $\beta(U_{s}\cup Z_H)\subseteq \beta(B_H)$ by the monotonicity property of havens \cite{AST90}. It follows that $F_{s}=\beta(U_{s}\cup Z_H)^G\subseteq \beta(B_H)^G=B^\perp$, and thus if the robber can travel safely to $F_{s}$ it can also travel safely to $B^\perp$, as desired. So in order to prove $(*)$, and thus conclude the proof of the theorem, it only remains to show that for  $j=1,\ldots,s$, the robber can travel safely to $F_j$ (while remaining in $G[C]$ the whole time).

\medskip

We start with the base case $j=1$.

  \begin{claim}
   \label{clm: C1}
   The robber can travel safely  in $G[C]$ to $F_1$.
  \end{claim}
  
  \begin{proofofclaim}
    By definition of a haven, $\beta(A_H)$ and $\beta(U_1\cup Z_H)$  touch in $H$. If they intersect, then $A^\perp=\beta(A_H)^G$ and $F_1=\beta(U_1\cup Z_H)^G$  intersect in $G$ and the robber can travel safely in $G[C]$ to  $F_1$ by Claim \ref{clm: bramble}. Assume now that $\beta(A_H)$ and $\beta(U_1\cup Z_H)$  are disjoint, and thus there is an edge $e=uv\in E(H)$ with $u\in \beta(A_H)$ and $v\in \beta(U_1\cup Z_H)$.
    
    We now show that $d_G(B,P_e)>\rho$, so the robber can use the path $P_e$ to travel safely from $A^\perp$ to some vertex of $F_1$. Assume for the sake of contradiction that there is some vertex $y\in B$ such that $d_G(y,P_e)\le \rho$.

    Assume first that $y\in Z$. By definition of a haven, $\beta(U_1\cup Z_H)$ is a component of $H-(U_1\cup Z_H)$, and thus   $v\notin y_H$. As $d_G(y, P_e)\leq \rho$, we then have $y_H=\sg{u}$.
    In particular, as $v\notin \beta(A_H)$, it follows from the definition of a guarded edge that  $e$ is guarded by $y$, which contradicts the fact that $y\in Z$. 
    

    So we can assume that $y\in B-Z$, and thus $d_G(y,C_v)>\rho$ by Claim \ref{clm: bad-free}. It follows that $y$ is guarding the edge $uv$, and thus there is an integer $1\le j \le s$ such that $u=u_j$ and $v=v_j$. As $\{v_j:1\le j \le s\}\subseteq U_1$, this contradicts the fact that $v=v_j\in \beta(U_1\cup Z_H)$.
  \end{proofofclaim}

  \begin{claim}
   \label{clm: Ci}
   For every $j\in [s]$, if the robber can travel safely  in $G[C]$ to some vertex of $F_j$, then he can also travel safely  in $G[C]$ to any vertex of $F_j$.
  \end{claim}
  
  \begin{proofofclaim}
      We first show that for every $y\in B$ and every $u\in \beta(U_j\cup Z_H)$, $d_G(y,C_u)>\rho$. This follows from Claim \ref{clm: bad-free} if $y\in B-Z$, so it remains to consider the case $y\in Z$.  As $\beta(U_j\cup Z_H)$ is disjoint from $Z_H$, it directly follows from Claim \ref{clm: dist} that $d_G(Z,C_u)>\rho$ for any $u\in \beta(U_j\cup Z_H)$, and thus $d_G(y,C_u)>\rho$, as desired.
    
      To prove that the robber can travel safely to any vertex of $F_j$, it remains to show that for every $e=uv\in \beta(U_j\cup Z_H)$, and $y\in B$, we have $d_G(y,P_e)>\rho$. Assume for the sake of contradiction that some vertex $y\in B$ is such that $d_G(y,P_e)\leq \rho$, for some $e=uv\in \beta(U_j\cup Z_H)$. In particular, $y_H\subseteq \{u,v\}$.
Observe that since $U_j$ contains at least one of $u_{j'},v_{j'}$ for any $1\le j'\le s$, $\beta(U_j\cup Z_H)$ cannot contain both endpoints from an edge $e_{j'}$ with $1\le j'\le s$. This implies that $y\in Z$. As $\beta(U_j\cup Z_H)\cap Z_H=\emptyset$, it then follows from Claim \ref{clm: dist} that $d_G(y,P_e)>\rho$, a contradiction.
  \end{proofofclaim}

  \begin{claim}
   \label{clm: nextCi}
   For every $1\le j< s$,
   if the robber can travel safely in $G[C]$ to $F_j$, then he can also travel safely in $G[C]$ to some vertex of $F_{j+1}$.
  \end{claim}

\begin{proofofclaim}
  By the definition of a haven, $\beta(U_j\cup Z_H)$ and $\beta(U_{j+1}\cup Z_H)$ touch in $H$. If they intersect, then $F_j$ and $F_{j+1}$ intersect and the result follows from Claim \ref{clm: Ci}. So we can assume that $\beta(U_j\cup Z_H)$ and $\beta(U_{j+1}\cup Z_H)$ are disjoint, and thus there is an edge $e=uv\in E(H)$ with $u\in \beta(U_j\cup Z_H)$ and $v\in \beta(U_{j+1}\cup Z_H)$. 

  We claim that $d_G(B,P_e)>\rho$, so that the robber can safely use $P_e$ to travel from $C_j$ to $C_{j+1}$. If this is not the case, then $d_G(y,P_e)\le \rho$ for some $y\in B$. As $Z_H$ is disjoint from $\beta(U_j\cup Z_H)$ and $\beta(U_{j+1}\cup Z_H)$, it follows from Claim \ref{clm: dist} that $d_G(Z,P_e)>\rho$, so we can assume that  $y\in B-Z$. Then there is an integer $1\le {j'}\le s$ such that $e=e_{j'}$, and thus $\{u,v\}=\{u_{j'},v_{j'}\}$. If $j'\le j$ then since $u_{j'}\in U_j\cap U_{j+1}$, we have $u_{j'}\notin \beta(U_j\cup Z_H)$ and $u_{j'}\notin\beta(U_{j+1}\cup Z_H)$. If $j'\ge  j+1$ then since $v_{j'}\in U_j\cap U_{j+1}$, we have $v_{j'}\notin \beta(U_j\cup Z_H)$ and $v_{j'}\notin\beta(U_{j+1}\cup Z_H)$. This contradicts the fact that $\{u,v\}=\{u_{j'},v_{j'}\}$.
  \end{proofofclaim}

  We have thus proved that the robber can reach $B^\perp$ from $A^\perp$ while remaining in $G[C]$ and staying at distance more than $\rho$ from the cops (located in $Y_i$). As the speed of the robber is equal to the length of a longest path in $G[C]$, the robber can travel in a single step from $A^\perp$ to $B^\perp$, which implies that $(*)$ holds and  concludes the proof of Theorem \ref{thm:bramble}.
 \end{proof}

Note that the lower bound from Theorem \ref{thm: minor} is optimal, in the sense that there exist graphs $G$ admitting some finite graph $H$ as an asymptotic minor, and such that $\wco(G)=\tw(H)$. To see this, we let $t\geq 1$ and $H:=K_t$, and we denote for every $i\geq 1$ by $H^{(i)}$ the graph obtained from $H$ by subdividing each edge $i$ times. We now let $G$ be obtained from the disjoint union of the graphs $H^{(i)}$, for $i\ge 1$, and an infinite one-way path $P=u_1,u_2,\ldots$ by choosing one vertex $v_i$ in each $H^{(i)}$, and identifying each vertex $v_i$ with $u_i$. Clearly, we have $H\preccurlyeq_{\infty} G$. Moreover, it is not hard to check that in this example, $\wco(G)\leq\tw(H)=t-1$. 
\medskip

We suspect that the relation between the weak cop number of a graph $G$ and the treewidth of the asymptotic minors of $G$ is deeper than suggested by Theorem \ref{thm: minor}.

\begin{question}
 \label{q: minors}
 Is it true that if a locally finite connected graph $G$ excludes some finite planar graph $H$ 
 as an asymptotic minor, then
 it has finite weak cop number?   If so, does there exist some 
 function $f:\mathbb N\to \mathbb N$ such that \[\wco(G)\leq f\big(\max\sg{\tw(H): |H|<\infty~\text{and}~H\preccurlyeq_{\infty} G}\big)?\] Can we choose $f=\mathrm{id}_{\mathbb N}$? 
\end{question}

We note that the assumption that the graph $G$ is locally finite is necessary in Question \ref{q: minors}, as the graph described in Example \ref{ex: tw2-wcop-infty} below excludes $K_4$ as an asymptotic minor (and thus also the $3\times 3$ grid), while its weak cop number is infinite.  

\medskip

We will discuss Question \ref{q: minors} again in the next section, in connection with a conjecture of Georgakopoulos and Papasoglu (Conjecture \ref{conj:GP}).
We also wonder if Question \ref{q: minors} admits a positive answer for general infinite graphs (not necessary locally finite), when replacing $\wco$ by its variant $\wco'$ that we will define in Section \ref{sec: alt}.

\subsection{Graphs quasi-isometric to graphs of bounded treewidth}
\label{S:wcn1}

It is not hard to check that trees have weak cop number $1$. In particular, as the weak cop number is a quasi-isometry invariant, every graph which is quasi-isometric to a tree has weak cop number $1$. We now deduce from Theorem \ref{thm: minor} that the converse implication also holds.

\begin{corollary} 
 \label{cor: wcn1}
 Let $G$ be a connected graph. Then $G$ has weak cop number $1$ if and only if $G$ is quasi-isometric to a tree. 
\end{corollary}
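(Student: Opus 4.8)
The plan is to prove Corollary~\ref{cor: wcn1} by combining Theorem~\ref{thm: minor} with a classical structural characterization of graphs quasi-isometric to trees, namely that of Manning (the ``bottleneck property'') or equivalently the characterization via excluding fat minors of large treewidth. One direction is already explained in the text: trees have weak cop number $1$, and since $\wco$ is a quasi-isometry invariant, any graph quasi-isometric to a tree has $\wco=1$. So the work is entirely in the converse: assuming $\wco(G)=1$, I want to conclude $G$ is quasi-isometric to a tree.

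First I would take the contrapositive: suppose $G$ is \emph{not} quasi-isometric to a tree. I want to produce a finite graph $H$ with $\tw(H)\geq 2$ (for instance $H=K_3$, or some large subdivision-robust graph) that is an asymptotic minor of $G$; then Theorem~\ref{thm: minor} gives $\wco(G)\geq \tw(H)\geq 2$, hence $\wco(G)\neq 1$. To find such an asymptotic minor, I would invoke the known equivalence (due essentially to Manning, and made explicit in the fat-minor language by Georgakopoulos and Papasoglu~\cite{GP23}) that a geodesic space (or connected graph) is quasi-isometric to a tree if and only if it does not contain arbitrarily fat cycles with no ``shortcut'', equivalently if and only if it excludes some fixed graph of treewidth $\geq 2$ as an asymptotic minor. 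Concretely: if $G$ is not quasi-isometric to a tree, then $G$ contains, for every $D$, a $D$-fat $K_3$-minor (or a $D$-fat subdivided-triangle, or more simply arbitrarily fat ``tripods''/thick cycles), which is exactly the statement $K_3 \preccurlyeq_\infty G$.

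The cleanest route, matching the flavor of Section~\ref{sec:hy}, is probably to go through Theorem~\ref{thm: linear-isoperimetric}-style reasoning combined with the fact that bounded-treewidth-asymptotic-minor graphs that are ``tree-like'' must be quasi-isometric to trees. Actually the most economical argument: it is a theorem of Manning (``Bottleneck Lemma'') that $G$ is quasi-isometric to a tree iff there is a constant $\Delta$ such that for all $x,y$ and all midpoints $m$ on a geodesic between them, every path from $x$ to $y$ meets $N^\Delta[m]$. If this fails for every $\Delta$, one builds for each $D$ a fat theta-graph or fat cycle in $G$, which yields a $D$-fat $K_3$-minor (the three branch vertices being suitable points, the three branches being the geodesic pieces and the long detour path, all pairwise $D$-far after passing to appropriately spaced subpaths). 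This establishes $K_3\preccurlyeq_\infty G$, and since $\tw(K_3)=2$, Theorem~\ref{thm: minor} gives $\wco(G)\geq 2$.

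The main obstacle I anticipate is the bookkeeping in extracting a genuine $D$-fat $K_3$-minor (with all four fatness conditions in the definition of a $D$-fat $H$-minor) from the failure of the bottleneck property: one gets a long cycle $C$ all of whose short subpaths are geodesic, but one must carve $C$ into three connected pieces $C_x, C_y, C_z$ together with three connecting paths $P_e$ so that the pairwise distances are all at least $D$. This requires choosing the three ``branch'' arcs short (constant length) and well-separated along $C$, using that subpaths of $C$ of bounded length are geodesic to control distances between far-apart arcs, and ruling out unwanted shortcuts in $G$ between these arcs (which is where the hypothesis that the cycle has no short chord, guaranteed by the no-shortcut/linear-isoperimetric-failure hypothesis, is used). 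I would handle this exactly as in the proof of Theorem~\ref{thm: hyperbolic}, where a similar cycle with all bounded subpaths geodesic is used, adapting the distance estimates to verify the four conditions of a fat minor. Once that is in place, the corollary follows immediately.
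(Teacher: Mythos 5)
Your proposal is correct and takes essentially the same route as the paper: the paper argues by contrapositive, cites Georgakopoulos--Papasoglu \cite[Theorem 3.1]{GP23} for the equivalence ``quasi-isometric to a tree iff $K_3$ is excluded as an asymptotic minor,'' and then applies Theorem~\ref{thm: minor} to conclude $\wco(G)\ge \tw(K_3)=2$. The bookkeeping you anticipate for carving a $D$-fat $K_3$-minor out of the failure of the bottleneck property is unnecessary, since the cited theorem is used as a black box.
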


\begin{proof}
By the remark above, it suffices to prove that every graph with weak cop number $1$ is quasi-isometric to a tree. We show the contrapositive. Let $G$ be a graph which is not quasi-isometric to a tree. Georgakopoulos and Papasoglou \cite[Theorem 3.1]{GP23} recently proved that a graph is quasi-isometric to a tree if and only if it excludes $K_3$ as an asymptotic minor, hence we have $K_3\preccurlyeq_{\infty} G$. In particular, Theorem \ref{thm: minor} implies that $\wco(G)\geq \tw(K_3)=2$, as desired.
\end{proof}

\begin{proposition}
 \label{prop: tw-ub}
 If $G$ is a locally finite connected graph which is quasi-isometric to a graph $H$, then $\wco(G)\leq \tw(H)+1$. 
\end{proposition}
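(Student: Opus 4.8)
The plan is to transport a tree-decomposition of $H$ to $G$ along the quasi-isometry and then have $\tw(H)+1$ cops escort the robber down this decomposition. Note that the naive route — $\wco(G)\le\wco(H)$ by monotonicity under quasi-isometric embeddings, then $\wco(H)\le\tw(H)+1$ — is \emph{not} available: the second inequality fails for general $H$, as witnessed by Example~\ref{ex: tw2-wcop-infty}, which also shows local finiteness of $G$ is genuinely needed.

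First, if $\tw(H)=\infty$ there is nothing to prove, so set $k:=\tw(H)<\infty$ and fix a tree-decomposition $(T,(V_t)_{t\in V(T)})$ of $H$ of width $k$, a quasi-isometry $g\colon V(H)\to V(G)$ with quasi-inverse $f\colon V(G)\to V(H)$, and a common constant $C$. For a suitable constant $D_1=D_1(C)$ I would put $W_t:=\{x\in V(G): d_H(f(x),V_t)\le D_1\}$. Then $(T,(W_t))$ is a tree-decomposition of $G$: it covers $V(G)$; the bags containing a fixed $x$ are exactly the bags of $H$ meeting the connected set $N^{D_1}_H[f(x)]$, hence form a subtree $T_x$ of $T$; and every edge of $G$ is covered once $D_1\ge 2C$. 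Crucially, $W_t\subseteq N^{D}_G[g(V_t)]$ for a constant $D=D(C)$, and $|g(V_t)|\le k+1$; since $G$ is locally finite this ball is finite, so every bag $W_t$ is finite — this is the only place local finiteness enters, and it is essential.

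For the cop strategy, the $k+1$ cops announce speed $1$ and a reach $\rho$ equal to a constant (depending on $C,k$) large enough that: (a) when the cops sit on the ``core'' $g(V_t)$ the robber cannot enter $W_t$ at all, hence is confined to one branch of $T$ at $t$; and (b) for an edge $tt'\in E(T)$, the cops on $g(V_t\cap V_{t'})$ alone keep the robber from crossing the $G$-separator $W_t\cap W_{t'}$ (using that if $f(x)$ is within $D_1$ of both $V_t$ and $V_{t'}$ it is within $3D_1$ of $V_t\cap V_{t'}$). After the robber reveals $v,s_r,R$, the cops root $T$ at a node $t_0$ with $v\in W_{t_0}$ and start on $g(V_{t_0})$; by (a) the robber must begin in some branch of $T$ at $t_0$. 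The cops then iterate: they identify the child $t_{j+1}$ of $t_j$ leading to the robber's branch and move, over finitely many rounds (each cop travels a finite distance, as adjacent bags of $H$ are at finite distance), from $g(V_{t_j})$ to $g(V_{t_{j+1}})$, keeping the cops on $g(V_{t_j}\cap V_{t_{j+1}})$ fixed so that by (b) the robber stays on the $t_{j+1}$-side; once reassembled on $g(V_{t_{j+1}})$ the robber is confined to a strictly deeper branch, and we repeat. To finish: either the chase path $t_0,t_1,\dots$ is finite, ending at a leaf of $T$ where the robber has no branch left and is captured, or it is an infinite ray. In the latter case, writing $T(t)$ for the subtree of descendants of $t$ and $C_j:=\big(\bigcup_{s\in T(t_{j+1})}W_s\big)\setminus W_{t_j}$ for the robber's region during phase $j$, one checks $C_0\supseteq C_1\supseteq\cdots$ (connectedness of $T_x$ forbids $x\in W_{t_j}$ and $x\in W_s$ for $s$ below $t_{j+1}$ unless $x\in W_{t_{j+1}}$). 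If the robber returned to $B_R(v)$ infinitely often then $C_j\cap B_R(v)\neq\emptyset$ for all $j$; as $B_R(v)$ is finite, the decreasing finite nonempty sets $C_j\cap B_R(v)$ share a vertex $x$. But $x\in C_j$ for all $j$ forces $t_j\notin T_x$ for all $j$ while $T_x$ meets $T(t_{j+1})$ for all $j$, which (a one-line argument on the subtree $T_x$) forces $T_x\subseteq T(t_j)$ for every $j$, hence $T_x\subseteq\bigcap_j T(t_j)=\emptyset$ — a contradiction. So the robber fails to return to $B_R(v)$ infinitely often, and $\wco(G)\le k+1=\tw(H)+1$.

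The hard part is the bookkeeping around the transitions: since the cores $g(V_t),g(V_{t'})$ may be far apart in $G$, a transition spans many rounds during which part of $W_{t_j}$ is temporarily vacated, and one must verify that the (fast) robber can use this window neither to slip back towards $t_0$ nor to oscillate into the exposed part of a deep bag that happens to lie near $v$. The two saving facts are that the separator $W_{t_j}\cap W_{t_{j+1}}$ stays guarded throughout (fact (b)) and that any vertex lying in $W_{t_j}$ for all large $j$ then lies in all these guarded separators, hence is never reachable; together with finiteness of bags and of $B_R(v)$ this confines the robber's reachable region away from $B_R(v)$ eventually. Choosing all the constants so that (a), (b), and the transition bounds hold simultaneously is routine but somewhat lengthy, and is where I expect the real work to be.
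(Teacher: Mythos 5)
Your proof is correct, and its skeleton---$k+1$ cops sweeping along a tree-decomposition, always keeping the adhesion set of the current tree edge guarded while the remaining cops relocate, so that the robber is pushed into ever deeper branches---is the same as the paper's. The interesting difference is in how the quasi-isometry is handled. The paper reduces, by quasi-isometry invariance of $\wco$, to the claim that every locally finite connected graph of treewidth at most $k$ has weak cop number at most $k+1$, and then plays with speed $1$ and reach $1$ directly on the bags of a width-$k$ decomposition; you instead pull the decomposition of $H$ back through the quasi-isometry to get bags $W_t$ of $G$ that are finite (by local finiteness) but of unbounded size, and compensate by placing the $k+1$ cops on the small cores $g(V_t)$ with a large constant reach, your observation that a vertex of $H$ close to both $V_t$ and $V_{t'}$ must be close to $V_t\cap V_{t'}$ doing the work of the paper's exact adhesion sets. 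What your version buys is that it never needs $G$ itself to have treewidth at most $k$ (which does not follow from the hypotheses: attaching larger and larger cliques along a ray gives a locally finite graph quasi-isometric to a tree but of infinite treewidth) nor $H$ to be locally finite, so the reduction is made explicit rather than absorbed into ``invariance under quasi-isometry''. Your endgame also differs: you intersect the nested robber territories $C_j$ with the finite ball $B_R(v)$ and derive a contradiction from $\bigcap_j T(t_{j+1})=\emptyset$, whereas the paper shows each vertex $x$ of the ball lands on the cleared side after at most $d_T(t,t_0)$ phases, where $t$ is a closest node to $t_0$ with $x$ in its bag; both work, and both use local finiteness only to make the target ball finite. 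The transition worry you flag is already resolved by your own observations (the guarded set $W_{t_j}\cap W_{t_{j+1}}$ equals $W_{t_j}\cap\bigcup_{s\in T(t_{j+1})}W_s$, so the temporarily vacated part of $W_{t_j}$ is unreachable from the robber's side), and the remaining constant-chasing is indeed routine.
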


\begin{proof}
 Since the result is trivial when $H$ has unbounded treewidth, we can assume that $H$ has finite treewidth. As the weak cop number is invariant under quasi-isometry, it is enough to show that every locally finite graph $G$ of treewidth at most $k\in \mathbb{N}$ has weak cop number at most $k+1$.
 For this, we consider a tree-decomposition $(T, \mathcal V)$ of $G$ of width at most $k$, where $\mathcal V=(V_t)_{t\in V(T)}$. We may assume without loss of generality that no two adjacent bags are equal. 
 We describe a winning strategy for $k+1$ cops to win the weak version of the cops and robber game. We let $s_c:=1$ and $\rho:=1$ be respectively the speed and reach of the cops, and let $s_r, B$ denote respectively the speed and the ball of finite radius in $G$ that the robber chooses. We fix an arbitrary node $t_0\in V(T)$. At the initial step, the cops occupy all vertices from $V_{t_0}$.

 We now prove by induction on $i\geq 0$ that there exists integers $\Delta_0, \ldots, \Delta_i$ and nodes $t_0,\ldots, t_i\in V(T)$ such that if we set for each $j\in \sg{0,\ldots,i}$, $\tau_j:=\Delta_0+\ldots+\Delta_j$, then the cops have a strategy in which: 
 \begin{itemize}
  \item for each $j\in \sg{0,\ldots, i}$, at the end of step $\tau_j$, the cops occupy all the vertices from the bag $V_{t_j}$;
  \item for each $j\in \sg{0,\ldots, i-1}$, if $x\in V(G)$ denotes the position of the robber at the end of step $\tau_j$, if $T_x$ is the subtree of $T$ induced by the nodes $t\in V(T)$ such that $x\in V_t$, and if $T'$ denotes the unique connected component of $T-t_j$ containing $T_x$, 
  then $t_{j+1}$ is the unique neighbour of $t_j$ in $T'$, and none of the vertices $t_0, \ldots, t_{j}$ lies in $T'$.
\end{itemize}
Note that the above properties are trivially satisfied for $i=0$. We now let $i\geq 1$ and assume that we already found $\Delta_0, \ldots, \Delta_{i-1}$ and $t_0, \ldots, t_{i-1}\in V(T_0)$ satisfying all the above conditions. We let $x\in V(G)$ denote the position of the robber at the end of step 
$\tau_{i-1}$, and, reusing the notations from the second item, we let $t_i$ denote the unique neighbour of $t_{i-1}$ in $T'$. We let $(X,Y)$ denote the edge-separation of $(T,\mathcal V)$ such that $V_{t_{i-1}}\subseteq X$ and $V_{t_i}\subseteq Y$. In particular, $x\in Y$. 

We claim that after a finite number $\Delta_i$ of steps, the cops can reach the configuration where they occupy all the vertices of $V_{t_i}$, while ensuring that the robber does not leave $Y$ during their moves. 
We let $k'=|V_{t_{i}}\setminus V_{t_{i-1}}|\leq k$. 
Note that at the end of step $\tau_{i-1}$, there are at least $k+1-k'$ cops occupying all the vertices in $V_{t_{i-1}}\cap V_{t_i}$ (recall that two cops may occupy the same position, if $|V_{t_{i-1}}|<k+1$). 
As $G$ is connected, we can thus move $k'$  cops located either in $V_{t_{i-1}}\setminus V_{t_i}$, or located on a vertex of $V_{t_{i-1}}\cap V_{t_i}$ occupied by another cop, such that after a finite number $\Delta_i$ of steps, the cops occupy all the vertices from $V_{t_i}$, and such that for every $m\in \sg{0,\ldots,\Delta_i}$, at the end of step $\tau_{i-1}+m$, all the vertices of $V_{t_{i-1}}\cap V_{t_i}$ are occupied by cops. This ensures that the robber cannot leave $Y$ during all these steps if he does not want to be caught by the cops. In particular, this implies that the two items above are still satisfied for the value $i$, and thus they are satisfied for any $i\ge 0$ by induction. 

\medskip

To conclude the proof, we now show that there exists some $i\geq 0$ such that if $(X_i,Y_i)$ denotes the edge-separation of $(T,\mathcal V)$ such that $V_{t_{i-1}}\subseteq X_i$ and $V_{t_i}\subseteq Y_i$, then $B\subseteq X_i$. Note that it will immediately conclude the proof, as after step $\tau_i$, the cops will always occupy a separator between any position of the robber and $B$, ensuring that the robber cannot enter $B$ anymore. 
As $G$ is locally finite, $B$ must be finite, hence it is enough to show that for every $x\in B$, there exists some $i\geq 1$ such that for every $i'\geq i$, $x\notin Y_{i'}\setminus X_{i'}$. We let $x\in B$. Observe first that for every two integers $i\leq i'$, we have 
$Y_{i'}\subseteq Y_i$ and $Y_{i'}\setminus X_{i'}\subseteq Y_i\setminus X_i$. In particular, it implies that if $x\notin Y_{i}\setminus X_{i}$ for some $i\geq 0$, then $x\notin Y_{i'}\setminus X_{i'}$ for each $i'\geq i$. 
We now let $t\in V(T)$ be such that $x\in V_t$, and such that the distance $d_T(t, t_0)$ is minimum. We set $d:=d_T(t,t_0)$. Note that for every $i\geq 1$, if $x \notin X_i$, then we must have $d_T(t,t_i)=d_T(t,t_{i-1})-1$. In particular, this implies that $x\in X_d$, and therefore that $x\in X_{i'}$ for any $i'\geq d$, concluding our proof.
\end{proof}

The following example shows that the statement of Proposition \ref{prop: tw-ub} does not hold anymore if $G$ has a vertex of infinite degree (that is, if $G$ is not locally finite). 

\begin{proposition}\label{ex: tw2-wcop-infty}
There is a graph $G$ such that $G$ has a single vertex of infinite degree, $\tw(G)=2$ and $\sco(G)=\wco(G)=\infty$.
\end{proposition}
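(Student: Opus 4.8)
The plan is to build $G$ explicitly as a ``star of grids'' hanging off a single apex vertex, so that the apex has infinite degree but removing it leaves a disjoint union of finite graphs of treewidth $2$, forcing $\tw(G)=2$; meanwhile the apex provides shortcuts that glue arbitrarily large grid-like pieces together metrically, so that large grids (hence graphs of large treewidth) appear as asymptotic minors and Theorem~\ref{thm: minor} gives $\wco(G)=\infty$ (and hence also $\sco(G)=\infty$, since $\sco\le\wco$).

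\begin{proposition}
There is a graph $G$ such that $G$ has a single vertex of infinite degree, $\tw(G)=2$ and $\sco(G)=\wco(G)=\infty$.
\end{proposition}

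\begin{proof}[Proof sketch]
For every $n\ge 1$ let $S_n$ be the subdivision of the $n\times n$ grid in which every edge is subdivided $n$ times (so distinct ``branch vertices'' of $S_n$ are at distance at least $n+1$ from one another). Pick one corner branch vertex $a_n$ of each $S_n$. Let $G$ be the graph obtained from the disjoint union of all the $S_n$, $n\ge 1$, by adding a new vertex $v^\star$ and joining $v^\star$ to $a_n$ for every $n$; more precisely, to keep distances inside each $S_n$ essentially unaffected while still shrinking the global diameter, we join $v^\star$ to \emph{every} branch vertex of every $S_n$ by a path of length $n$ through $v^\star$ (equivalently, subdivide each edge $v^\star a$ enough; the exact constants do not matter). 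Then $v^\star$ is the unique vertex of infinite degree.

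For the treewidth bound: $G-v^\star$ is a disjoint union of subdivided grids, each of which has treewidth $2$ (any subdivision of a graph $H$ has treewidth $\max(\tw(H),2)$ once $H$ has an edge, and grids have treewidth $\ge 2$; more to the point, a cleaner construction here is to use, instead of grids, subdivided \emph{theta graphs} or series--parallel graphs — any finite graph of treewidth exactly $2$ whose metric blow-up still contains large-treewidth asymptotic minors). Taking a tree-decomposition of width $2$ of each component and attaching all of them to a single bag containing $v^\star$ together with one of its neighbours in that component, then adding $v^\star$ to every bag, gives a tree-decomposition of $G$; but adding $v^\star$ to every bag raises the width to $3$. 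To get width exactly $2$ one instead notes that $v^\star$ need only be added to the bags along a path in $T$ meeting each component's subtree, which can be arranged to have width $2$ by a careful choice of the decomposition (put $v^\star$ and $a_n$ in a bag adjacent to the component tree). Since $G$ clearly contains a cycle, $\tw(G)\ge 2$, hence $\tw(G)=2$.

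For the cop-number lower bound: fix $D\ge 1$. Because the branch vertices of $S_n$ are pairwise at distance $\ge n+1$ in $S_n$, for $n$ large the collection $(C_x)$, with $C_x$ a single branch vertex of $S_n$ and $P_e$ the subdivided-edge path of $S_n$ joining them, is a $D$-fat model of the $n\times n$ grid \emph{inside $S_n$ with the metric of $S_n$}; and one checks that paths through $v^\star$ only shorten distances between branch vertices of the \emph{same} component by a bounded amount relative to $n$, so for $n$ large enough (depending on $D$) this is still a $D$-fat $(n\times n)$-grid minor in $G$. Hence every finite grid is an asymptotic minor of $G$. Since grids have unbounded treewidth, Theorem~\ref{thm: minor} gives $\wco(G)\ge \tw(\text{large grid})$ arbitrarily large, so $\wco(G)=\infty$, and therefore $\sco(G)=\infty$ as well.

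\textbf{Main obstacle.} The delicate point is calibrating the attachment of $v^\star$: it must create enough ``long-range'' edges that $G-v^\star$ has many components of bounded treewidth (forcing $\tw(G)=2$, \emph{not} $3$), yet it must not shrink intra-component distances so much that the fat-minor structure inside a single $S_n$ is destroyed. Attaching $v^\star$ to branch vertices by \emph{paths of length growing with $n$} resolves the tension (inside $S_n$, a detour through $v^\star$ costs $\ge 2n$, while a grid edge costs $n+1$, so such detours never help), and a bespoke width-$2$ tree-decomposition confirms $\tw(G)=2$. Verifying both facts simultaneously — in particular writing down the tree-decomposition that keeps the width at exactly $2$ once $v^\star$ is present — is the part that needs care; everything else is bookkeeping on distances.
\end{proof}
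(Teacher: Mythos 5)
Your approach cannot work, and the obstruction is structural rather than a matter of calibration. A $D$-fat $H$-minor of $G$ (for $D\ge 1$) is in particular an ordinary $H$-minor of $G$: the branch sets are disjoint connected subgraphs and the connecting paths are pairwise disjoint and internally disjoint from the branch sets. Hence $H\preccurlyeq_\infty G$ implies $\tw(H)\le \tw(G)$. So if $\tw(G)=2$, then $G$ excludes \emph{every} graph of treewidth at least $3$ as an asymptotic minor, and Theorem~\ref{thm: minor} can never certify more than $\wco(G)\ge 2$. The two goals you are trying to combine --- treewidth exactly $2$ and arbitrarily large grids as asymptotic minors --- are mutually exclusive, which is why your parenthetical search for ``a finite graph of treewidth exactly $2$ whose metric blow-up still contains large-treewidth asymptotic minors'' cannot succeed. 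Your first variant fails for the dual reason: subdividing edges preserves treewidth for graphs of treewidth at least $2$, so the subdivided $n\times n$ grid $S_n$ has treewidth $n$, the components of $G-v^\star$ have unbounded treewidth, and $\tw(G)=\infty$ (your claimed formula $\max(\tw(H),2)$ already says this, contradicting your assertion that each component has treewidth $2$). The paper in fact points out, right before this proposition, that its example excludes $K_4$ as an asymptotic minor --- a necessary feature of any treewidth-$2$ witness.

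The paper's proof therefore does not go through Theorem~\ref{thm: minor} at all. It takes $G_i$ to be the $(i-1)$-fold subdivision of the infinite $i$-regular tree $T_i$ plus a universal vertex $z_i$, and identifies all the $z_i$ into one vertex $z$; then $G-z$ is a forest, giving $\tw(G)=2$ honestly. The lower bound is a direct, hand-crafted robber strategy: against $k$ cops the robber plays inside $G_C$ for $C>k$ large, sits at a branch vertex of the degree-$C$ tree until threatened, and since he has more than $k$ long, pairwise far-apart escape paths to ``super safe'' neighbours while there are only $k$ cops, some escape path is always unguarded. If you want to salvage your write-up, you must replace the asymptotic-minor argument with a counting-escape-routes argument of this kind, exploiting that the infinite-degree vertex lets a single graph of treewidth $2$ contain vertices of arbitrarily high degree with long pairwise-separated paths between them.
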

 
 \begin{proof}
 For every $i\geq 2$, we let $T_i$ denote the infinite $i$-regular tree. We let $H_i$ denote the graph obtained by adding a universal vertex $z_i$ in $T_i$ (a vertex adjacent to all vertices of $T_i$), and we let $G_i$ be the graph obtained from $H_i$ by subdividing each edge $i-1$ times (that is, by replacing each edge by a path of length $i$). Finally, we let $G$ be the graph obtained by taking the disjoint union of all graphs $G_i$, $i\ge 2$, and identifying all the vertices $z_i$ into some vertex $z$ (of infinite degree), see Figure \ref{fig: ex}.
 
 \begin{figure}[htb] 
  \centering 
  \includegraphics[scale=1]{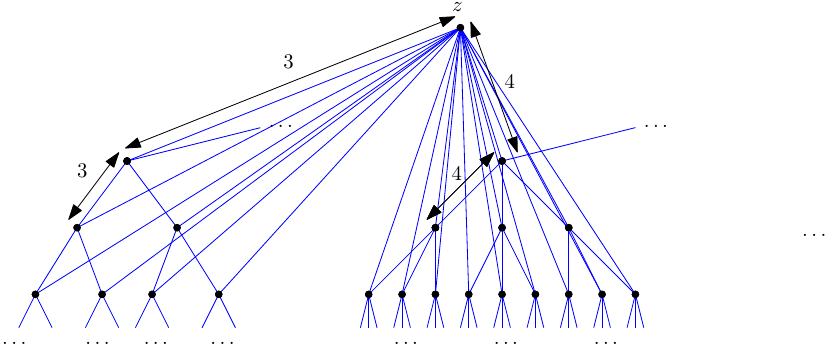}
  \caption{The graph described in Proposition \ref{ex: tw2-wcop-infty}. Blue lines represent subdivided edges.}
  \label{fig: ex}
\end{figure}

 Clearly, the graph $G-z$ is a tree, hence we have $\tw(G)=2$. Moreover, we claim that $\sco(G)=\infty$, and thus, as $\sco(G)\le \wco(G)$, $\wco(G)=\infty$. 

 To see this, we let $k\in \mathbb N$ and show that the robber has a winning strategy in the strong version of the cops and robber game in $G$ against $k$ cops. Assume that the $k$ cops choose speed $s_c$. We let $s_r:=4 s_c$ be the speed of the robber, and $\rho$ be the reach chosen by the cops afterwards. We set $C:=\max(k, 2(s_c+\rho))+1$, and consider the ball $B$ of radius $2C$ centered in $z$. For simplicity, and up to choosing a larger value for $C$, we will also assume that $C$ is a multiple of $4s_c$. Note that the whole subgraph $G_{C}$ of $G$ is contained in $B$. We now show that the robber has a winning strategy in which he will occupy at each step a vertex of $G_{C}$. To simplify the notation, we assume that $V(T_{C})\subseteq V(G_{C})\subseteq V(G)$, and for each edge $e\in E(H_{C})$, we let $P_e$ denote the corresponding path of length $C$ between two vertices of $V(T_C)$ in $G_{C}$ (and thus in $G$). 
 At every step of the game such that the cops occupy vertices $v_1, \ldots, v_k\in V(G)$, we say that a vertex $u\in V(T_C)$ is \emph{safe} if it is at distance more than $\frac{C}{2}$ from all cop positions $v_1, \ldots, v_k$ in $G$.
 We say that $u\in V(T_C)$ is \emph{super safe} if $u$ is at distance more than $\tfrac34 C$ from all cop positions $v_1, \ldots, v_k$ in $G$. 
 Note that our choice of $C$ guarantees that every safe vertex is out of  reach from the cops. 
 
 We now describe a winning strategy for the robber. After the cops chose their initial positions, the robber chooses as initial position any safe vertex in $V(T_C)$. Note that such a vertex always exists, as each vertex of $G$ lies at distance at most $C/2$ from a finite number of vertices of $V(T_C)$.
 As long as the vertex $u$ occupied by the robber is safe, the robber remains in his current location. We now assume that at some step $t\geq 1$, the cops move so that $u$ is not safe anymore, and we will show that, after a finite number of rounds, the robber can reach a node $w\in V(T_C)$ which will be safe again. In particular, this will immediately imply a winning strategy for the robber in the strong version of the cops and robber game. Note that as $u$ was safe just before the cops moved, at the beginning of step $t$, every cop position $v_i$ is at distance at least
 $\frac{C}{2}+1-s_c > \rho$ from $u$, so the cops did not catch the robber after their moves. As $u$ has degree at least $k+1$ in $T_C$, and $T_C$ is a tree, $u$ has a super safe neighbor $w$ in $T_C$ such that the path $P_{uw}$ in $G_C$ is not occupied by any cop.  It thus means that the robber can travel along the path $P_{uw}$ to reach $w$ in $\frac{C}{s_r}= \frac{C}{4 s_c}$ steps. Note that during these moves, each cop can only travel a distance of $s_c\cdot \frac{C}{4 s_c}=\frac{C}{4}$, and thus each cop is at distance  more than $\frac{3C}{4}-\frac{C}{4}\ge\frac{C}{2}$ from $w$ just after the robber has reached $w$. 
 It thus implies that $w$ is a safe vertex at the end of step $t+\frac{C}{s_r}$, as desired.
%
%
 \end{proof}

We obtain the following as a direct consequence of Proposition \ref{prop: tw-ub}.

\begin{corollary} 
 \label{cor: wcn2}
 Every connected graph with weak cop number at most $2$  is quasi-isometric to a graph of treewidth at most 2. Every connected locally finite graph which is quasi-isometric to a graph of treewidth at most 2 has weak cop number at most 3.
\end{corollary}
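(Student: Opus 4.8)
The plan is to treat the two assertions separately. The second one---every connected locally finite graph quasi-isometric to a graph of treewidth at most $2$ has weak cop number at most $3$---is nothing but the instance $\tw(H)\le 2$ of Proposition~\ref{prop: tw-ub}: if $G$ is connected, locally finite, and quasi-isometric to a graph $H$ with $\tw(H)\le 2$, then Proposition~\ref{prop: tw-ub} gives $\wco(G)\le\tw(H)+1\le 3$ directly. So all the content is in the first assertion.

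For the first assertion I would argue by contraposition, following the same scheme as the proof of Corollary~\ref{cor: wcn1}. Let $G$ be a connected graph that is \emph{not} quasi-isometric to any graph of treewidth at most $2$; the goal is to deduce $\wco(G)\ge 3$. At this point I would invoke the treewidth-$2$ analogue of \cite[Theorem~3.1]{GP23}, that is, the ``treewidth~$2$'' case of the Georgakopoulos--Papasoglu conjecture, which is known to hold (see \cite{GP23,AH24}): a graph is quasi-isometric to a graph of treewidth at most $2$ if and only if it excludes $K_4$ as an asymptotic minor. Hence $K_4\preccurlyeq_\infty G$, and since $\tw(K_4)=3$, Theorem~\ref{thm: minor} gives $\wco(G)\ge\tw(K_4)=3>2$, which is exactly the contrapositive we wanted. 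It is convenient to note that this characterization can be phrased equivalently as ``$G$ is quasi-isometric to a graph of treewidth at most $2$ iff $G$ excludes every finite graph of treewidth at least $3$ as an asymptotic minor'': this follows from the classical fact that a finite graph has treewidth at least $3$ exactly when it admits a $K_4$ minor, together with the easy observation that $H'\preccurlyeq_\infty G$ whenever $H'$ is a minor of some $H$ with $H\preccurlyeq_\infty G$; depending on how the external result is stated, I would use whichever of the two phrasings is the most direct to cite.

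I do not expect any internal difficulty: the corollary is a combination of results already available. The single point worth flagging is that, in contrast with the rest of the paper, the first assertion is \emph{not} self-contained---it rests on an already proved case (treewidth $2$) of the Georgakopoulos--Papasoglu conjecture. If one wanted to stay strictly inside this paper, Theorem~\ref{thm: minor} alone only yields the weaker conclusion that a connected graph $G$ with $\wco(G)\le 2$ excludes every finite graph of treewidth at least $3$ as an asymptotic minor, and it is precisely the external characterization that upgrades this to ``$G$ is quasi-isometric to a graph of treewidth at most $2$''.
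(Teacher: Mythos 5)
Your proof is correct and follows essentially the same route as the paper's: the second assertion is the instance $\tw(H)\le 2$ of Proposition~\ref{prop: tw-ub}, and the first is the contrapositive obtained by combining Theorem~\ref{thm: minor} (with $\tw(K_4)=3$) with the known treewidth-$2$ case of the Georgakopoulos--Papasoglu conjecture. The only discrepancy is bibliographic: the paper attributes the statement that excluding $K_4$ as an asymptotic minor implies being quasi-isometric to a graph of treewidth at most $2$ to \cite{AJKW24}, not to \cite{GP23} or \cite{AH24}.
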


\begin{proof}
It was proved in \cite{AJKW24} that any graph which excludes $K_4$ as an asymptotic minor is  quasi-isometric to a graph of treewidth at most 2. This shows that any graph $G$ which is not quasi-isometric to a graph of treewidth at most 2 contains $K_4$ as an asymptotic minor, and thus $\wco(G)\ge \tw(K_4)=3$ by Theorem \ref{thm: minor}. This proves the first part of the statement. The second part is an immediate consequence of Proposition \ref{prop: tw-ub}.
\end{proof}

We now construct a locally finite graph of treewidth 2 and weak cop number 3, showing that the second part of the statement of Corollary \ref{cor: wcn2} is optimal. Let $H$ be the graph obtained from a triangle by first replacing every edge by two parallel edges, and then subdividing each edge exactly once. As before, for every $i\geq 1$, we denote by $H^{(i)}$ the graph obtained from $H$ by subdividing each edge $i$ times. We now let $G$ be obtained from the disjoint union of the graphs $H^{(i)}$, for $i\ge 1$, and an infinite one-way path $P=u_1,u_2,\ldots$ by choosing one vertex $v_i$ in each $H^{(i)}$, and identifying each vertex $v_i$ with $u_i$. Clearly, $\tw(G)=\tw(H)=2$, but it is not hard to check that $\wco(G)\ge 3$. 

\medskip

A natural question is whether some analogue of Corollaries \ref{cor: wcn1} and \ref{cor: wcn2} also holds for larger values of $k$.

\begin{question}
 \label{q: Qi-tw}
 Is it true that if a locally finite graph $G$ has finite weak cop number, then it is quasi-isometric to some graph of finite treewidth? If so, is it quasi-isometric to some graph of treewidth at most $f(\wco(G))$ for some $f:\mathbb N\to \mathbb N$? Can we choose $f(k)=k+1$? 
\end{question}

See \cite{NSS25,Hic25} for recent work on the structure of graphs that are quasi-isometric to graphs of bounded treewidth. 

\medskip

To summarize, every locally finite graph which is 
quasi-isometric to some graph of bounded finite treewidth has bounded weak cop number, and by Theorem \ref{thm: minor}, every graph $G$ with finite weak cop number 
excludes as an asymptotic minor every graph of treewidth $\wco(G)$. 
In \cite[Conjecture 9.2]{GP23}, Georgakopoulos and Papasoglou conjectured the following.

\begin{conjecture}[\cite{GP23}]\label{conj:GP}
If there is an integer $k$ such that a graph $G$ excludes the $k\times k$ grid as an asymptotic minor, then $G$ is quasi-isometric to a graph of bounded treewidth.
\end{conjecture}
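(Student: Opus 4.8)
The natural line of attack is the contrapositive: assume $G$ is \emph{not} quasi-isometric to any graph of bounded treewidth, and produce, for every $k$ and every $D\ge 1$, a $D$-fat model of the $k\times k$ grid in $G$; this exhibits the $k\times k$ grid as an asymptotic minor of $G$ for every $k$, which is what we need. I would split the argument into a ``coarse duality'' step and a ``coarse grid theorem'' step. For the first, one extracts from the failure of quasi-isometry to bounded treewidth a scale-invariant obstruction: for every scale $r$ and every $n$, a ``fat haven'' of order $n$, i.e.\ a function $\beta_r$ assigning to each vertex set $X$ with $|X|<n$ a connected component of $G-N^r[X]$ such that $\beta_r(X)$ and $\beta_r(Y)$ lie at distance at most $r$ in $G$ for all such $X,Y$. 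Such a family should be produced by applying the finite haven/treewidth duality (Theorem~\ref{thm:bramble}) to suitable bounded-diameter quotients of $G$ at scale $r$, together with the characterizations of quasi-isometry to bounded treewidth via coarse tree-decompositions developed in \cite{NSS25,Hic25}: the absence of a coarse tree-decomposition of bounded width should force the order of $\beta_r$ to grow. For the second step one converts a fat haven of order on the order of $k^2$ into a $D$-fat $k\times k$ grid minor, a coarse analogue of the Robertson--Seymour grid theorem: extract a large well-linked set, route $\Omega(k^2)$ pairwise $D$-far paths across it via Menger-type arguments applied to inflated cuts, and clean the resulting configuration into a subdivided grid whose branch sets and connecting paths are pairwise at distance at least $D$, checking that the quantitative losses (polynomial in $k$ in the finite setting) remain finite after inflation by $D$.

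There is also a route specific to this paper, valid for locally finite graphs: a positive answer to Question~\ref{q: minors} would show that a locally finite graph excluding the $k\times k$ grid as an asymptotic minor has finite weak cop number, and a positive answer to Question~\ref{q: Qi-tw} would show that a locally finite graph of finite weak cop number is quasi-isometric to a graph of bounded treewidth (the converse, for locally finite graphs, being Proposition~\ref{prop: tw-ub}); combining the two would give the conjecture for locally finite graphs. The robber strategies behind Theorem~\ref{thm: minor} suggest how one might attack Question~\ref{q: minors}: organize the highly connected substructures a robber could exploit into a coarse tree-like pattern along which boundedly many cops can push him away from any ball.

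The main obstacle, in either route, is precisely the coarse grid theorem: forcing a fat grid out of an obstruction that is highly connected at \emph{every} scale, uniformly in the fatness parameter $D$. The delicate point is to rule out, or correctly bypass, exotic scale-invariant obstructions --- expander-like graphs and their relatives --- which are coarsely highly connected yet are neither quasi-isometric to bounded-treewidth graphs nor obviously carry fat grids. As noted in the introduction, this is exactly where Conjecture~\ref{conj:GP} fails in general, so one should not expect an unconditional proof; the realistic targets are restricted settings, such as those addressed by Albrechtsen and Hamann~\cite{AH24}, where additional structural hypotheses (for instance planarity, or bounded degree) exclude the exotic obstructions.
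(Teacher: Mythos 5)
There is nothing here to check against, because the statement you were given is a \emph{conjecture} quoted from \cite{GP23}: the paper offers no proof of it, and in fact explicitly records that it is \emph{false} in general, a counterexample having been constructed in \cite{AD25}. So no proof attempt can succeed, and to your credit your last paragraph recognizes this. What you have written is therefore not a proof but a (reasonable) survey of why one cannot exist: the contrapositive strategy you outline --- coarse treewidth/haven duality at every scale, followed by a ``coarse grid theorem'' turning a scale-invariant highly connected obstruction into $D$-fat grid minors --- is indeed the natural plan, and the second step is exactly where it breaks down, on expander-like graphs that are coarsely highly connected at all scales yet carry no fat grids and are not quasi-isometric to bounded treewidth.

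One correction to your second route. You propose to derive the conjecture for locally finite graphs by combining positive answers to Questions~\ref{q: minors} and \ref{q: Qi-tw}. The implication you state is logically sound (the $k\times k$ grid is planar, so Question~\ref{q: minors} would give finite weak cop number, and Question~\ref{q: Qi-tw} would then give quasi-isometry to bounded treewidth), but it cannot be carried out: the counterexample of \cite{AD25} shows that at least one of the two questions must have a negative answer for locally finite graphs, which is precisely the dichotomy the paper points out (``if its weak cop number is finite, this would provide a negative answer to Question~\ref{q: Qi-tw}, while if it is infinite, this would provide a negative answer to Question~\ref{q: minors}''). Also note that the paper only establishes the implications in the opposite direction --- the validity of Conjecture~\ref{conj:GP} on a class would imply positive answers to those questions on that class --- so the questions are best viewed as weakenings of the conjecture, not as stepping stones toward it. The honest summary is: the statement is false, the paper proves nothing about it beyond situating it relative to its own results, and only restricted positive instances (such as the finitely presented group case via \cite{AH24}, used for Theorem~\ref{thm:1infty}) survive.
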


Note that the $k\times k$ grid has treewidth $k$, and thus it follows from the paragraph above that
every graph with finite weak cop number excludes the $k\times k$ grid as an asymptotic minor for some integer $k$. As a consequence, the validity of Conjecture \ref{conj:GP} would directly imply a positive answer to the first part of Question \ref{q: Qi-tw}. Somewhat unfortunately, a counterexample to Conjecture \ref{conj:GP} was recently constructed in \cite{AD25}. It would be interesting to study the weak cop number of their construction: if it is finite, this would provide a negative answer to Question \ref{q: Qi-tw}, while if it is infinite, this would provide a negative answer to Question \ref{q: minors}.
Nevertheless, the discussion above still implies that if Conjecture \ref{conj:GP} is valid when $G$ is restricted to some class of graphs $\mathcal{G}$, then the first part of Question \ref{q: Qi-tw} has a positive answer for  $G\in \mathcal{G}$. As a consequence,  we can still use positive instances of the conjecture to obtain positive results on Question \ref{q: Qi-tw} for various graph classes, as we will see below.

\smallskip

As in the case of  Question \ref{q: minors}, we also ask if Question \ref{q: Qi-tw} admits a positive answer for general infinite graphs (not necessary locally finite), when replacing $\wco$ by its variant $\wco'$ that we will define in Section \ref{sec: alt}.
\medskip

Let $\mathcal{G}$ be an infinite graph class. If a connected locally finite graph $G\in \mathcal{G}$ excludes some finite planar graph $H$ as an asymptotic minor, then there is an integer $k$ such that $G$ excludes the $k\times k$ grid as an asymptotic minor. The validity of  Conjecture \ref{conj:GP} for  $G\in \mathcal{G}$  would then imply that $G$ is quasi-isometric to a graph of bounded treewidth, and in turn Proposition \ref{prop: tw-ub} would imply that $G$ has bounded weak cop number. This shows that the validity of Conjecture \ref{conj:GP} for a given graph class $\mathcal{G}$ also 
implies a positive answer to the first part of Question
\ref{q: minors} for the same class $\mathcal{G}$.



\subsection{Locally finite quasi-transitive graphs and finitely generated groups}
\label{S:virtually-free}

A graph $G$ is \emph{vertex-transitive} if the vertex set of $G$ has a unique orbit under the action of the automorphism group of $G$. The graph $G$ is \emph{quasi-transitive} if the vertex set of $G$ has finitely many orbits under the action of the automorphism group of $G$. Note that every Cayley graph of a group is vertex-transitive, and every vertex-transitive graph is quasi-transitive.

\medskip

In the locally finite quasi-transitive setting, it is well known that the property of having finite treewidth is equivalent to the property of being quasi-isometric to a tree (we give a sketch of the proof below for the sake of completeness).

\begin{theorem}[Folklore]
\label{thm: QT-tw}
 Let $G$ be a locally finite quasi-transitive graph. Then $G$ is quasi-isometric to a tree if and only if $G$ has finite treewidth.
\end{theorem}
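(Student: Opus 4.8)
The plan is to prove the two implications separately. One direction, ``$G$ quasi-isometric to a tree $\Rightarrow \tw(G)<\infty$'', is short and uses only that $G$ has bounded degree (which follows from local finiteness and quasi-transitivity); the other, ``$\tw(G)<\infty\Rightarrow G$ quasi-isometric to a tree'', rests on the theory of automorphism-invariant tree-decompositions of quasi-transitive graphs, and is where the main difficulty lies.

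For the first implication, let $f\colon V(G)\to V(T)$ be a quasi-isometry onto a tree $T$ with constant $C\ge1$, i.e.\ $\tfrac1C\,d_G(x,y)-C\le d_T(f(x),f(y))\le C\,d_G(x,y)+C$ for all $x,y\in V(G)$, and let $\Delta<\infty$ be the maximum degree of $G$. I would set, for every node $t\in V(T)$,
\[
V_t:=f^{-1}\bigl(N_T^{2C}[t]\bigr)=\{v\in V(G):d_T(f(v),t)\le 2C\},
\]
and verify that $(T,(V_t)_{t\in V(T)})$ is a tree-decomposition of $G$: the bags cover $V(G)$ because $v\in V_{f(v)}$ for every $v$; for a fixed $v$ the set $\{t:v\in V_t\}$ equals the ball $N_T^{2C}[f(v)]$, which is a connected subgraph of the tree $T$, so the consistency condition holds; and any edge $uv\in E(G)$ satisfies $d_T(f(u),f(v))\le 2C$, whence $u,v\in V_{f(u)}$. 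Finally, two vertices lying in a common bag $V_t$ have $T$-images at distance at most $4C$, hence are at $G$-distance at most $5C^2$, so $|V_t|\le 1+\Delta+\cdots+\Delta^{5C^2}$ uniformly in $t$; thus this tree-decomposition has bounded width and $\tw(G)<\infty$.

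For the converse, I would use the following folklore structural fact: a connected locally finite quasi-transitive graph of finite treewidth admits a tree-decomposition $(T,(V_t)_{t\in V(T)})$ such that $T$ is locally finite, every bag is non-empty and of bounded size, any two vertices lying in a common bag are at uniformly bounded $G$-distance, and every vertex of $G$ lies in only boundedly many bags; such a decomposition is provided by the theory of accessibility and canonical tree-decompositions of quasi-transitive graphs (see e.g.\ Thomassen--Woess and Hamann). Given such a decomposition, I would let $\phi\colon V(G)\to V(T)$ send each $v$ to some node $t$ with $v\in V_t$, and check that $\phi$ is a quasi-isometry. For the upper bound, along a geodesic $u=w_0,\dots,w_\ell=v$ in $G$ the subtrees $\{t:w_j\in V_t\}$ are uniformly small, consecutive ones overlap (since adjacent vertices of $G$ share a bag), and their union is a connected subtree of $T$ containing both $\phi(u)$ and $\phi(v)$, so $d_T(\phi(u),\phi(v))=O(d_G(u,v))$. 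For the lower bound, taking the $T$-path $\phi(u)=s_0,\dots,s_m=\phi(v)$, choosing a vertex in each adhesion set $V_{s_i}\cap V_{s_{i+1}}$, and joining consecutive choices by short $G$-paths inside the bags, one obtains a $u$-$v$ walk in $G$ of length $O(m)=O(d_T(\phi(u),\phi(v)))$. Coarse surjectivity is immediate since every node has a non-empty bag. Hence $G$ is quasi-isometric to $T$.

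The step I expect to be the main obstacle is supplying the well-behaved tree-decomposition used in the converse. Finite treewidth alone is not enough: Proposition~\ref{ex: tw2-wcop-infty} exhibits a locally finite graph of treewidth $2$ that is not quasi-isometric to any tree, so quasi-transitivity must be used essentially — to pass from an arbitrary bounded-width tree-decomposition to one that is additionally invariant under $\mathrm{Aut}(G)$ and of bounded vertex-multiplicity. My plan is to cite the relevant canonical tree-decomposition results rather than to reprove them; granting that input, both implications reduce to the elementary metric estimates above.
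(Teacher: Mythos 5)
Your proposal is correct and follows essentially the same route as the paper: the substantive direction (finite treewidth $\Rightarrow$ quasi-isometric to a tree) is, in both cases, a black-box appeal to the theory of canonical/automorphism-invariant tree-decompositions of locally finite quasi-transitive graphs (the paper goes via Halin's grid theorem, Woess's bound on thin ends, and the Carmesin--Hamann--Miraftab decomposition; you cite the Thomassen--Woess/Hamann accessibility circle of results), followed by the same elementary metric verification that the bag-assignment map is a quasi-isometry. The only real divergence is in the easy direction: you build a bounded-width tree-decomposition of $G$ directly by taking preimages of balls $N_T^{2C}[t]$ under the quasi-isometry, whereas the paper embeds $G$ as a subgraph of a bounded power of a decorated bounded-degree tree; your variant is in fact exactly the paper's Proposition~\ref{prop: tw-QI} specialized to $H=T$, so nothing new is needed, and both arguments correctly use only that $G$ has bounded degree. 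Your remark that quasi-transitivity is essential for the converse (in view of Proposition~\ref{ex: tw2-wcop-infty} one cannot hope to get a quasi-isometry from finite treewidth alone) matches the paper's framing.
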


\begin{proof}
 The direct implication holds more generally without the quasi-transitivity assumption if we assume that $G$ is of finite maximum degree $\Delta$.
 To see this, let $T$ be a tree and $f: V(G)\to V(T)$ be such that there exists some $C\geq 1$ such that for every $x,y\in V(G)$, we have 
 \[\frac{1}{C}\cdot d_G(x,y)-C\leq d_T(f(x),f(y)) \leq C\cdot d_G(x,y)+C,\]
 and such that for every $t\in V(T)$, there exists $x\in V(G)$ such that $d_T(f(x),t)\leq C$. It is not hard to show that as $G$ has finite bounded degree,  it is also quasi-isometric to some tree of finite bounded degree (see for example \cite[Lemma 4.2]{EG24}), thus we may assume that $T$ has finite bounded degree. We let $T'$ be the tree of finite bounded degree obtained from $T$ by attaching $\Delta^{C^2}+1$ pendant leaves to each node.  
 We claim that $G$ is a subgraph of the graph $(T')^{2C+2}$. For this, we define an injective graph homomorphism $g:G\to (T')^{2C+2}$ as follows. For every $x\in V(G)$, we let $g(x)$ be one of the $\Delta^{C^2}+1$ pendant vertices we attached to the node $f(x)$ in $T'$, in such a way that for each $t\in V(T)$, $g$ maps injectively the vertices of $f^{-1}(t)$ to the pendant vertices attached to $t$. Note that it is always possible to find such a mapping $g$, as for every $t\in V(T)$ and $x,y\in V(G)$ with $f(x)=f(y)=t$, we have $d_G(x,y)\le C^2$ and thus  $|f^{-1}(t)|\leq \Delta^{C^2}+1$. If two vertices $x,y\in V(G)$ are adjacent in $G$, then $d_T(f(x),f(y)) \leq  2C$ and thus $d_{T'}(g(x),g(y)) \leq  2C+2$. This shows that $g$ is indeed an injective homomorphism to $(T')^{2C+2}$, and thus $G$ is a subgraph of $(T')^{2C+2}$.
 As $T'$ is a tree of finite bounded degree, the graph $(T')^{2C+2}$ has finite treewidth, hence $G$ also has finite treewidth. 
 
 We now give a sketch of the proof for the converse implication. Let $G$ be a locally finite quasi-transitive graph of finite treewidth. By Halin's grid theorem \cite{HalinGrid}, all the ends of $G$ are thin, and by a theorem of Woess \cite{Woess89}, they moreover have finite bounded size. A consequence of a result of Carmesin, Hamann and Miraftab \cite[Theorem 7.3]{CTTD}, is the existence of a canonical tree-decomposition $(T,(V_t)_{t\in V(T)})$ of finite bounded adhesion distinguishing the ends of $G$. In particular, this decomposition has the property that $\mathrm{Aut}(G)$ induces a quasi-transitive action on $V(T)$. Hence this implies 
 that the set of values $\{d_{G}(u,v): t\in V(T), u,v\in V_t\}$ admits some upper bound $A\in \mathbb N$. One can then show that any mapping $f: V(G)\to V(T)$ satisfying that for each $x\in V(G), x\in V_{f(x)}$ defines a quasi-isometry.
\end{proof}

An immediate consequence of Corollary \ref{cor: wcn1} and Theorem \ref{thm: QT-tw} is then the following.

\begin{corollary}
 \label{cor: QT-wcn1}
 Let $G$ be a locally finite connected quasi-transitive graph. Then $\wco(G)=1$ if and only if $G$ has finite treewidth. 
\end{corollary}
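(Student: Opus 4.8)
The plan is to simply chain together the two equivalences that have already been established, since the statement is a direct corollary. First I would invoke Corollary \ref{cor: wcn1}, which says that for any connected graph $G$ (a hypothesis satisfied here, as $G$ is assumed connected), $\wco(G)=1$ if and only if $G$ is quasi-isometric to a tree. This reduces the claim to showing that, under the additional assumptions that $G$ is locally finite and quasi-transitive, being quasi-isometric to a tree is equivalent to having finite treewidth.

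That equivalence is exactly the content of Theorem \ref{thm: QT-tw}, whose hypotheses, local finiteness and quasi-transitivity, are precisely those we are given. Combining the two biconditionals, namely $\wco(G)=1$ $\iff$ $G$ is quasi-isometric to a tree $\iff$ $\tw(G)<\infty$, yields the corollary. So the proof is one line: apply Corollary \ref{cor: wcn1}, then Theorem \ref{thm: QT-tw}.

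There is essentially no obstacle here; the only thing to verify is that the hypotheses line up, i.e.\ that ``$G$ connected'' is enough to apply Corollary \ref{cor: wcn1} and that ``$G$ locally finite and quasi-transitive'' is enough to apply Theorem \ref{thm: QT-tw}, both of which hold by assumption. All of the substantive work was done earlier: the lower bound $\wco(G)\geq \tw(H)$ for asymptotic minors $H$ (Theorem \ref{thm: minor}), which feeds into Corollary \ref{cor: wcn1} through the Georgakopoulos--Papasoglu characterization of quasi-trees via excluding $K_3$ as an asymptotic minor, together with the folklore structural result Theorem \ref{thm: QT-tw} describing locally finite quasi-transitive graphs of finite treewidth. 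Once those are in hand, the present corollary is purely formal.
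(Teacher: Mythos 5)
Your proposal is correct and is exactly the paper's argument: the corollary is stated as an immediate consequence of Corollary \ref{cor: wcn1} combined with Theorem \ref{thm: QT-tw}, with the hypotheses matching just as you describe.
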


In \cite[Question K]{lee2023coarse}, the authors asked whether for every finitely generated group $\Gamma$, we have $\wco(\Gamma)=1\text{ or }\infty$. 
By Corollary \ref{cor: QT-wcn1}, a 
positive answer to the first part of Question \ref{q: Qi-tw} would imply that for every locally finite quasi-transitive  graph $G$, $\wco(G)\in \{1,\infty\}$. This follows from the following (folklore) proposition. 

\begin{proposition}[Folklore]
\label{prop: tw-QI}
Let $G$ be a graph of finite maximum degree, and $H$ be a graph of finite treewidth such that $G$ admits a quasi-isometric embedding in $H$. Then $G$ also has finite treewidth. 
\end{proposition}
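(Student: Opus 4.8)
The plan is to turn a bounded-width tree-decomposition of $H$ directly into a bounded-width tree-decomposition of $G$, by replacing each bag of $H$ with a ball of bounded radius and pulling it back along the embedding. Write $\Delta$ for the maximum degree of $G$, let $C\ge 1$ be a constant witnessing the quasi-isometric embedding $f\colon V(G)\to V(H)$ (so $\tfrac1C d_G(x,y)-C\le d_H(f(x),f(y))\le C\,d_G(x,y)+C$ for all $x,y\in V(G)$; only these two inequalities are used, not coarse surjectivity), and fix a tree-decomposition $(T,(W_t)_{t\in V(T)})$ of $H$ of width at most $k:=\tw(H)<\infty$, so $|W_t|\le k+1$ for every $t$. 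I would set $r:=\lceil 2C\rceil$ and, for each $t\in V(T)$, define
\[ V_t:=\{x\in V(G): N_H^r[f(x)]\cap W_t\ne\emptyset\}. \]
The claim to be proved is that $(T,(V_t)_{t\in V(T)})$ is a tree-decomposition of $G$ whose bags have uniformly bounded size, which gives $\tw(G)<\infty$.

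For the size bound I would fix $w\in W_t$ and set $S_w:=\{x\in V(G): d_H(f(x),w)\le r\}$; if $x,y\in S_w$ then $d_H(f(x),f(y))\le 2r$, hence $d_G(x,y)\le C(2r+C)$, so $S_w$ lies inside a ball of $G$ of radius at most $C(2r+C)$. As $G$ has maximum degree $\Delta$, such a ball has at most some $N=N(\Delta,C)$ vertices, and since $V_t\subseteq\bigcup_{w\in W_t}S_w$ this gives $|V_t|\le(k+1)N$ for all $t$.

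It then remains to check the three axioms. Vertex-covering is immediate: $f(x)$ lies in some $W_t$, whence $x\in V_t$. The crux is that for each fixed $x$ the node set $\{t:x\in V_t\}$ is a subtree of $T$. To see this, note that $B:=N_H^r[f(x)]$ induces a connected subgraph of $H$ (every vertex of $B$ is joined to $f(x)$ by a geodesic of $H$ whose vertices all stay in $B$); writing $T_u:=\{t:u\in W_t\}$ for the nonempty subtree of $T$ associated with a vertex $u$ of $H$, we have $T_u\cap T_{u'}\ne\emptyset$ whenever $uu'\in E(H[B])$ (some bag of $H$ contains that edge), and since $H[B]$ is connected, the standard fact that a ``connected union'' of subtrees of a tree is again a subtree yields that $\{t:x\in V_t\}=\bigcup_{u\in B}T_u$ is connected. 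The interpolation axiom follows at once: if $x\in V_t\cap V_{t''}$ and $t'$ lies on the $t$--$t''$ path of $T$, then $t'$ belongs to the subtree $\{t:x\in V_t\}$, so $x\in V_{t'}$. For edge-covering, take $xy\in E(G)$; then $d_H(f(x),f(y))\le 2C\le r$, so $f(y)\in N_H^r[f(x)]$, and any node $t$ with $f(y)\in W_t$ then satisfies $x,y\in V_t$.

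Combining these, $(T,(V_t))$ is a tree-decomposition of $G$ of width at most $(k+1)N-1<\infty$. The only point requiring genuine care is the verification that $\{t:x\in V_t\}$ is a subtree of $T$; everything else is a routine distance estimate. In particular no difficulty arises from $H$ possibly failing to be locally finite, since connectedness of $\bigcup_{u\in B}T_u$ only uses connectedness of $H[B]$ and not finiteness of $B$. (An alternative route would be to invoke the fact that treewidth is controlled by finite subgraphs together with the observation that each finite subgraph of $G$ embeds quasi-isometrically into $H$ and hence, by a compactness/separator argument, has treewidth bounded in terms of $\Delta$, $C$, and $k$; but the explicit construction above seems cleanest.)
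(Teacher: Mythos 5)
Your construction is correct and is essentially the paper's: the bag at node $t$ is exactly the pullback $f^{-1}\bigl(N_H^{r}[W_t]\bigr)$ of the $2C$-neighbourhood of the original bag, with the same size bound coming from the lower quasi-isometry inequality and the degree bound on $G$. The only (cosmetic) difference is in verifying the coherence axiom: you use connectedness of balls in $H$ together with the ``connected union of subtrees is a subtree'' fact, whereas the paper argues directly that $N_H^{2C}[W_t]\cap N_H^{2C}[W_{t''}]\subseteq N_H^{2C}[W_t\cap W_{t''}]$ via the separator property of $W_t\cap W_{t''}$; both are routine and correct.
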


\begin{proof} 
 We let $\Delta\in \mathbb N$ denote the maximum degree of $G$.
 Let $f: V(G)\to V(H)$ and $C\geq 1$ be such that for every $x,y\in V(G)$, we have 
 \[\frac{1}{C}d_G(x,y)-C\leq d_T(f(x),f(y)) \leq C\cdot d_G(x,y)+C.\]
 We let $(T,\mathcal V)$ be a tree-decomposition of $H$ of finite width $k$, with $\mathcal V= (V_t)_{t\in V(T)}$, and for every $t\in V(T)$, we let $V'_t:=f^{-1}(N_H^{2C}[V_t])$, where $N_H^{2C}[V_t]$ denotes the ball of radius $2C$ around $V_t$ in $H$ (note that we potentially have $V'_t=\emptyset$ if no vertex of $N_H^{2C}[V_t]$ has a preimage under $f$). We claim that $(T,(V'_t)_{t\in V(T)})$ is a tree-decomposition of finite width of $G$.
 
 We first check that $(T,(V'_t)_{t\in V(T)})$ is a tree-decomposition. The first item from the definition of tree-decomposition immediately holds as $(T, \mathcal V)$ is a tree-decomposition. To see that the second item holds, we consider three nodes $t,t',t''\in V(T)$ such that $t'$ is on the unique path between $t$ and $t''$ in $T$. As $(T,\mathcal V)$ is a tree-decomposition, we have $V_{t}\cap V_{t''}\subseteq V_{t'}$. We claim that moreover $N_H^{2C}[V_t]\cap N_H^{2C}[V_{t''}]\subseteq N_H^{2C}[V_t\cap V_{t''}]$, which will be enough to conclude then as $N_H^{2C}[V_{t}\cap V_{t''}]\subseteq  N_H^{2C}[V_{t'}]$ and thus, $V'_{t}\cap V'_{t''}\subseteq V'_{t'}$. To show the inclusion $N_H^{2C}[V_t]\cap N_H^{2C}[V_{t''}]\subseteq N_H^{2C}[V_t\cap V_{t''}]$, we let $x\in N_H^{2C}[V_t]\cap N_H^{2C}[V_{t''}]$, and let $a\in V_t$ and  $b\in V_{t''}$ such that $d_H(a,x)\leq 2C$ and $d_H(b,x)\leq 2C$. In particular, $x$ lies on a path $P$ of length $4C$ from $a$ to $b$ obtained from concatenating two paths $P_1$ and $P_2$, each of length at most $2C$ respectively from $a$ to $x$ and from $x$ to $b$. As $(T,\mathcal V)$ is a tree-decomposition, $V_t\cap V_{t''}$ separates $V_t$ from $V_{t''}$ in $H$, hence $P$ must intersect $V_t\cap V_{t''}$. This implies then that $x\in N_H^{2C}[V_t\cap V_{t''}]$, as claimed above. 
 The third item of the definition of a tree-decomposition follows from the fact that for every edge $xy\in E(G)$, $d_H(f(x),f(y))\leq 2C$, hence for every $t\in V(T)$ such that $f(x)\in V_t$, both $x$ and $y$ must belong to $V'_t$. It remains to check that $(T, (V'_t)_{t\in V(T)})$ has finite width. First, note that for every $z\in V(H)$, $f^{-1}(N_H^{2C}[z])$ has diameter at most $C\cdot(4C+C)$ in $G$. In particular, $|f^{-1}(N_H^{2C}[z])|\leq \Delta^{C\cdot(4C+C)}$, implying that for each $t\in V(T)$, $|V'_t|\leq (k+1)\cdot \Delta^{C\cdot(4C+C)}$.
\end{proof}

Now  consider any locally finite connected quasi-transitive graph $G$ with  $\wco(G)<\infty$. If the first part of Question \ref{q: Qi-tw} (even restricted to locally finite quasi-transitive  graphs) has a positive answer, then this implies that $G$ is quasi-isometric to a graph of bounded treewidth. Since $G$ is quasi-transitive and locally finite, it has bounded maximum degree, and thus by Proposition \ref{prop: tw-QI}, $G$ has bounded treewidth, and then Corollary \ref{cor: QT-wcn1} implies that $\wco(G)=1$. This shows that if the first part of Question \ref{q: Qi-tw} has a positive answer, then for any locally finite connected quasi-transitive graph $G$,  $\wco(G)\in \{1,\infty\}$.

\medskip

We have observed above that  the validity of  Conjecture \ref{conj:GP} for a given graph class $\mathcal{G}$ would directly imply a positive answer to the first part of Question \ref{q: Qi-tw} for the same class $\mathcal{G}$. It follows from the paragraph above that if the conjecture is valid for locally finite quasi-transitive graphs, then it would also imply that for every locally finite quasi-transitive graph $G$, $\wco(G)\in \{1,\infty\}$.

\medskip

In \cite{lee2023coarse}, the authors proved that every finitely generated group which is virtually free has weak cop number $1$, and asked whether the converse implication also holds. 
We now show that this is a simple  consequence of Corollary \ref{cor: QT-wcn1}.

\begin{corollary}
 \label{cor: Group-tw}
 Let $\Gamma$ be a finitely generated group. Then $\wco(\Gamma)=1$ if and only if $\Gamma$ is virtually free.
\end{corollary}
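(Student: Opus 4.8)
The plan is to reduce Corollary~\ref{cor: Group-tw} to Corollary~\ref{cor: QT-wcn1} via the classical structure theory of virtually free groups. First I would recall that a finitely generated group $\Gamma$ acts geometrically (freely, properly discontinuously, cocompactly) on any of its Cayley graphs $G$ with respect to a finite generating set, so $G$ is a connected, locally finite, vertex-transitive (hence quasi-transitive) graph. By Corollary~\ref{cor: QT-wcn1}, $\wco(\Gamma)=\wco(G)=1$ if and only if $G$ has finite treewidth.

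The remaining task is therefore purely group-theoretic: $\Gamma$ is virtually free if and only if its Cayley graph $G$ has finite treewidth. For the forward direction, if $\Gamma$ is virtually free then it is quasi-isometric to a free group of finite rank, hence quasi-isometric to a tree (the Cayley graph of $F_k$ with the standard generators is a tree); then Theorem~\ref{thm: QT-tw} (or directly the fact, already used in the excerpt, that trees have weak cop number $1$ together with quasi-isometry invariance) gives that $G$ has finite treewidth. For the converse, suppose $G$ has finite treewidth. Then by Theorem~\ref{thm: QT-tw}, $G$ is quasi-isometric to a tree, i.e.\ $\Gamma$ is a finitely generated group quasi-isometric to a tree. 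Here I would invoke the well-known theorem (combining Stallings' ends theorem with Dunwoody's accessibility for finitely presented — or more precisely, the theorem that a finitely generated group quasi-isometric to a tree is virtually free; this is due to Stallings, Dunwoody, and is often stated as: a group is virtually free iff it is quasi-isometric to a tree iff it is the fundamental group of a finite graph of finite groups) to conclude that $\Gamma$ is virtually free. Equivalently, and perhaps cleanest to cite: a finitely generated group has more than one end and ``tree-like'' Cayley graph precisely when it splits over finite subgroups, and iterating this (accessibility) terminates exactly for virtually free groups; the statement ``$\Gamma$ virtually free $\iff$ $\Gamma$ quasi-isometric to a tree'' is standard.

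Concretely, the proof would read roughly: ``Let $G$ be a Cayley graph of $\Gamma$ with respect to a finite generating set. Then $G$ is connected, locally finite, and vertex-transitive. By Corollary~\ref{cor: QT-wcn1}, $\wco(\Gamma)=\wco(G)=1$ if and only if $G$ has finite treewidth, and by Theorem~\ref{thm: QT-tw} the latter is equivalent to $G$ being quasi-isometric to a tree, i.e.\ to $\Gamma$ being quasi-isometric to a tree. It is a classical fact (see Stallings~\cite{...}, Dunwoody~\cite{...}) that a finitely generated group is quasi-isometric to a tree if and only if it is virtually free. Combining these equivalences yields the claim.''

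I do not expect any serious obstacle here: all the heavy lifting has been done in Corollary~\ref{cor: QT-wcn1} and Theorem~\ref{thm: QT-tw}, and the equivalence between virtual freeness and being quasi-isometric to a tree is standard in geometric group theory. The only point requiring a little care is to make sure the characterization ``virtually free $\iff$ quasi-isometric to a tree'' is invoked in a form valid for all finitely generated groups (it is — unlike accessibility phenomena that need finite presentation, this particular statement holds in full generality because Stallings' theorem applies to all finitely generated groups and Karlsson--Dunwoody-type accessibility for virtually free groups is automatic). So the main ``work'' is bookkeeping: assembling three known equivalences into one chain.
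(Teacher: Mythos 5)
Your proof is correct and follows essentially the same route as the paper: both reduce the statement to Corollary~\ref{cor: QT-wcn1}, so that everything comes down to showing that a finitely generated group whose Cayley graph has finite treewidth is virtually free (the easy direction, virtually free $\Rightarrow$ weak cop number $1$, is in both cases just quasi-isometry invariance plus the fact that trees are $1$-weak-cop-win, which the paper attributes to \cite{lee2023coarse}). The only difference is the closing step: the paper goes directly from finite treewidth to thin ends via Halin's grid theorem and then invokes the Karrass--Pietrowski--Solitar characterization, whereas you pass through Theorem~\ref{thm: QT-tw} to get quasi-isometry to a tree and then cite the Stallings--Dunwoody theorem that a finitely generated group is quasi-isometric to a tree if and only if it is virtually free. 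Both closings are standard and valid; yours is marginally more roundabout, since the converse direction of Theorem~\ref{thm: QT-tw} itself rests on Halin's theorem and Woess's result, but your remark that the ``quasi-isometric to a tree $\iff$ virtually free'' equivalence holds for all finitely generated groups (no finite presentability needed) is correct and is exactly the point that requires care in your version.
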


\begin{proof}
 As mentioned above, the converse implication was proved in \cite{lee2023coarse}. Let $\Gamma$ be a finitely generated group such that $\wco(\Gamma)=1$, and let $G=\mathrm{Cay}(\Gamma, S)$ be some of its associated Cayley graphs, for some finite set $S$ of generators of $\Gamma$.
 By Corollary \ref{cor: QT-wcn1}, $G$ has finite treewidth. By Halin's grid theorem \cite{HalinGrid}, all the ends of $G$ must be thin. In particular, a result of Karrass,
 Pietrowski and Solitar \cite{KPS} implies that $\Gamma$ must be virtually free. 
\end{proof}

Albrechtsen and Hamann \cite{AH24} proved that every finitely presented group which is not virtually free admits the infinite square grid as an asymptotic minor. Thus, combining this with Theorem \ref{thm: minor}, we immediately obtain the following corollary, implying a positive answer to \cite[Question K]{lee2023coarse} in the special case of finitely presented groups.

\begin{corollary}
 \label{cor: weak-infty}
 Every finitely presented group which is not virtually free has infinite weak cop number. In particular, for any finitely presented group $\Gamma$, $\wco(\Gamma)\in \{1,\infty\}$.
\end{corollary}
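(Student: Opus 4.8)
The plan is to combine two ingredients that are essentially already in place: the lower bound from Theorem~\ref{thm: minor} and the deep structural input of Albrechtsen and Hamann. Let $\Gamma$ be a finitely presented group and let $G = \mathrm{Cay}(\Gamma, S)$ be a Cayley graph of $\Gamma$ with respect to a finite generating set $S$; recall that $\wco(\Gamma)$ is well defined because all such Cayley graphs are quasi-isometric and $\wco$ is a quasi-isometry invariant. I would split into two cases according to whether $\Gamma$ is virtually free.

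If $\Gamma$ is virtually free, then $G$ is quasi-isometric to a tree (a classical fact: virtually free groups act properly discontinuously and cocompactly on trees, or alternatively one invokes Corollary~\ref{cor: Group-tw} in the easy direction already proved in \cite{lee2023coarse}), and since trees have weak cop number $1$ and $\wco$ is a quasi-isometry invariant, we get $\wco(\Gamma) = 1$. If $\Gamma$ is not virtually free, then by the theorem of Albrechtsen and Hamann~\cite{AH24}, $G$ admits the infinite square grid as an asymptotic minor. In particular, for every $k \ge 1$, $G$ admits the $k \times k$ grid $H_k$ as an asymptotic minor (the $k\times k$ grid is a minor of the infinite grid, and being an asymptotic minor passes to minors — a $D$-fat infinite-grid-minor restricts to a $D$-fat $H_k$-minor by taking the relevant branch sets and paths). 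Since $\tw(H_k) = k$, Theorem~\ref{thm: minor} gives $\wco(G) \ge \tw(H_k) = k$. As this holds for every $k$, we conclude $\wco(G) = \infty$, i.e.\ $\wco(\Gamma) = \infty$. Combining the two cases yields $\wco(\Gamma) \in \{1, \infty\}$ for every finitely presented $\Gamma$, which is the statement.

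The only place requiring a little care is the passage from ``the infinite grid is an asymptotic minor'' to ``every finite grid is an asymptotic minor'', and the verification that Theorem~\ref{thm: minor} applies with $H = H_k$ (which just needs $H_k$ finite and $H_k \preccurlyeq_\infty G$, both clear). I would also remark that the dichotomy $\{1,\infty\}$ together with Corollary~\ref{cor: Group-tw} immediately gives the sharper statement that a finitely presented group has $\wco = 1$ iff it is virtually free and $\wco = \infty$ otherwise, recovering and generalizing the group-specific computations of \cite{lee2023coarse, Cornect_Martinez24}.

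The main obstacle is not in this proof at all — it is entirely deferred to the external input \cite{AH24}, whose proof that finitely presented non-virtually-free groups contain the infinite grid as an asymptotic minor is the substantive part. Within the present paper, the argument is a short deduction; the reason finite presentability is needed (rather than mere finite generation) is precisely that \cite{AH24} requires it, which is why Question~\ref{q:1infty} remains open in general and why Lehner's subsequent extension \cite{Leh25} to all locally finite vertex-transitive graphs uses a different route.
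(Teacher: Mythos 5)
Your proposal is correct and takes essentially the same route as the paper: invoke the Albrechtsen--Hamann theorem to get the infinite grid as an asymptotic minor of any Cayley graph of a non-virtually-free finitely presented group, restrict to finite $k\times k$ grids, and apply Theorem~\ref{thm: minor} with $\tw(H_k)=k$, while the virtually free case gives $\wco(\Gamma)=1$ via quasi-isometry to a tree. Your explicit remark on passing from the infinite grid to its finite subgrids is a point the paper leaves implicit, but it is the same argument.
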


\section{The strong cop number of the 2-dimensional grid}\label{sec:z2}

In this section we prove the following.

\begin{theorem}
$\sco(\mathbb{Z}^2)=\infty$.
\end{theorem}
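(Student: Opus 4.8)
The plan is to exhibit, for every $k$, a winning strategy for the robber against $k$ cops in the strong game on $\mathbb{Z}^2$. The cops first announce their speed $s_c$; the robber picks a speed $s_r$; the cops then pick a reach $\rho$; and finally the robber picks the ball to protect. Since the strong cop number is quasi-isometry invariant, it is enough to play on a convenient model of $\mathbb{Z}^2$ (the square grid, or the Euclidean plane with an appropriate discretization). The natural approach is to use a large square (or square annulus) as the ``arena'' in which the robber stays forever, mimicking the robber strategy used for the cycle in the hyperbolicity proof (Theorem~\ref{thm: hyperbolic}) but now in two dimensions, where the extra degree of freedom lets a single robber defeat arbitrarily many cops. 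More precisely, one wants to fix a geodesic closed curve $C$ in the grid bounding a huge region $\Omega$ of side length $L \gg k, s_c, \rho$, and have the robber live on $C$ (or just inside it), using the fact that $k$ cops, each of bounded reach $\rho$, can only ``block'' a bounded-length arc of $C$ at a time, so $C$ has far more than enough room for the robber to reroute around all of them.

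The key steps, in order, would be: (1) After $\rho$ is revealed, choose $L$ enormous compared to $\rho, s_c, k$ and let $C$ be the boundary cycle of an $L\times L$ square region (a geodesic cycle in the grid up to constant factors, with all bounded-length subarcs being geodesics). Define the robber to be \emph{safe} when he is on $C$ and at distance $\gg \rho s_c$ from every cop. (2) As in the cycle argument, when the robber becomes unsafe he plans to sprint along $C$ a distance $\Theta(\rho s_c)$ in a carefully chosen direction over $\rho$ rounds; during those rounds the cops move only $\Theta(\rho s_c) \cdot (s_c/s_r)$, a smaller amount, so he re-establishes safety. (3) The crux is choosing the escape direction. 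On a cycle there are only two directions and one shows at least one of the two arcs stays cop-free; on $\mathbb{Z}^2$ one should instead argue that the set of points of $C$ that are within ``danger distance'' of the $k$ cops is a union of at most $k$ arcs of total length $O(k \rho s_c)$, which for $L$ large leaves an overwhelming majority of $C$ untouched, so the robber can always find a long safe detour. (4) Conclude that the robber stays on $C$ forever, hence visits a ball of radius $R = \operatorname{diam}(C)$ centered at an appropriate vertex infinitely often (indeed he never leaves it), so the cops cannot win; since $k$ was arbitrary, $\sco(\mathbb{Z}^2)=\infty$. One also needs to double-check that capture during the sprint is impossible: every intermediate vertex on the planned $C$-path must stay at distance $>\rho$ from every cop, which follows from the length/speed bookkeeping exactly as in the cycle proof.

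The main obstacle I expect is step~(3): on the cycle, the two ``halves'' $P^-, P^+$ of a geodesic subpath centered at the robber cannot both be close to the single cop because their far endpoints would then be too close to each other, contradicting geodesy. In $\mathbb{Z}^2$ with $k$ cops this pigeonhole becomes more delicate — one must quantify how a single cop at distance $\le \rho s_c$ from $C$ can ``shadow'' the robber's progress along $C$, and then sum over $k$ cops while keeping the analysis clean enough that the robber's chosen detour is provably cop-free for all $\rho$ rounds. A clean way to handle this is to take $C$ long enough (say $L \ge 100\, k\,(\rho s_c + s_c + \rho)$) that the total ``blocked length'' $O(k\rho s_c)$ is a tiny fraction of $|C|$, so that starting from any point of $C$ there is a direction in which the next $\Theta(k\rho s_c)$-length arc of $C$ — which certainly contains an arc of length $100\rho s_c$ that the robber can traverse in $\rho$ rounds — avoids all danger zones. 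The bookkeeping then mirrors the one-cop cycle computation, scaled by $k$, and the rest is routine.
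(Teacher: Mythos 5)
There is a genuine gap at your step (3), and it is fatal to the approach as described. Your robber is confined to the closed curve $C$, which is topologically a circle, so from any position he has only two escape directions, exactly as in the one-cop cycle argument. The observation that the ``danger zone'' is a union of at most $k$ arcs of total length $O(k\rho s_c)$, hence a tiny fraction of $|C|$ when $L$ is large, does not help: what matters is not the measure of the blocked set but the connectivity of its complement. Already two cops defeat this strategy: they place themselves on $C$ on either side of the robber and walk toward him along $C$. The robber cannot pass a cop sitting on $C$, since any subpath of $C$ going past that cop contains vertices within distance $\rho$ of it, and his superior speed is useless inside a shrinking arc. Making $L$ larger only delays the capture. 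So your argument shows at best $\sco(\mathbb{Z}^2)\ge 2$, which already follows from non-hyperbolicity. (A secondary issue: long subpaths of the boundary of an $L\times L$ square that turn a corner are not geodesics in the $\ell_1$ grid metric, so the ``geodesic cycle'' bookkeeping from the hyperbolicity proof does not transfer directly either.)

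The missing idea is to genuinely use the two-dimensionality: the robber needs \emph{more pairwise-far-apart escape routes than there are cops}, so that the pigeonhole is on the number of routes rather than on total blocked length. The paper's proof takes a $(2t+1)\times 1$ array of square ``rooms'' of side $5\rho(t+1)s$; between any two rooms there are $t+1$ connecting paths pairwise at distance at least $5\rho s$, so each cop's $2\rho s$-neighbourhood meets at most one of them and some path is always free. A second pigeonhole on the rooms (each cop's neighbourhood meets at most $2$ of the $2t+1$ rooms, so some room is always ``super safe'') tells the robber where to run to. If you want to salvage your square-boundary picture, you would need to replace the single curve $C$ by $t+1$ nested concentric square annuli (or some equivalent family of far-apart parallel routes) together with crossings between them, which essentially reconstructs the rooms-and-paths structure.
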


\begin{proof}
For every integer $t\ge 1$, we give a winning strategy for the robber against $t$ cops in the infinite $2$-dimensional square grid (which is a Cayley graph of $\mathbb Z^2$). Assume the cops are running at speed $s$. The robber will travel at speed $s_r=5 (t+1)(2t+1)s$. Let $\rho$ be the radius of capture of the cops. We assume for simplicity that $\rho\ge 1$ in the computation (we note that if the cops can win with a radius of capture of 0, they can also win with $\rho=1$). 
Consider a subgrid $B$ of size $(2t+1)\cdot 5\rho (t+1)s$ by $5\rho (t+1)s$, and divide it into $2t+1$ subgrids of size $5\rho (t+1)s$ by $5\rho (t+1)s$, which we will call \emph{rooms}. 

Observe that for any two rooms $R$ and $R'$, there are $t+1$ paths $P_1, \ldots, P_{t+1}$ from $R$ to $R'$, such that
\begin{itemize}
    \item any two paths lie at distance at least $5\rho s$ apart, and 
    \item for any $x\in R$  and every $1\le i \le t+1$, there is a path $Q\subseteq R$ such that the concatenation of the two paths $Q, P_i$ is a path between $x$ and $R'$ of length at most $5\rho (t+1)(2t+1)s$.
\end{itemize}
See Figure \ref{fig:grid} for an illustration.

\begin{figure}[htb] 
  \centering 
  \includegraphics[scale=0.8]{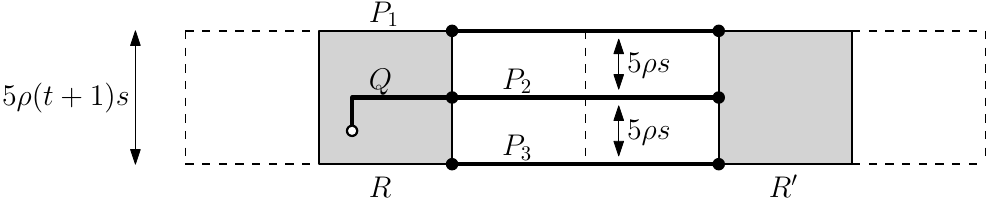}
  \caption{The $t+1$ paths between any two rooms $R$ and $R'$.}
  \label{fig:grid}
\end{figure}

We say that a room $R$ is \emph{safe} if it lies at distance more than $(2\rho+1) s$ from all the cops, and \emph{super safe} if it lies at distance more than $(3\rho +1)s$ from all the cops.  Note that each ball of radius $(3\rho+1)s\le 4\rho s$ in the grid intersects at most 2 rooms, and thus at most $2t$ rooms are not super safe at any time. As there are $2t+1$ rooms, it follows that at any step in the game, at least one room is super safe (and thus safe).

At the beginning of the game, the robber chooses a safe room $R$ and remains there until it is not safe anymore. At the previous round the room $R$ was still safe, so all cops were at distance at least $(2\rho+1)s$ from $R$. It follows that after their move all cops are still at distance at least $(2\rho+1)s-s=2\rho s$ from $R$. We then consider another room $R'$ which is super safe after the moves of the cops, and by the observation above there are $t+1$ paths $P_1,\ldots, P_{t+1}$ from $R$ to $R'$, such that any $P_i,P_j$ lie  at distance at least $5\rho s$ apart. Note that the ball of radius $2\rho s$ around a cop can intersect at most one of these paths, and therefore there is a path $P_i$ between $R$ and $R'$ which lies at distance more than  $2\rho s$ from all the cops. Let $Q$ be the path of $R$ between the current location of the robber to the starting point of $P_i$, such that the concatenation of $Q,P_i$ has length at most $5\rho (t+1)(2t+1)s$.

During $\rho$ steps, the robber travels along $Q,P_i$ from its current location in $R$ to $R'$ (this is possible, since the speed of the robber is $s_r=5 (t+1)(2t+1)s$, while the concatenation of $Q,P_i$ has length at most $5\rho (t+1)(2t+1)s$). During these steps, the cops have only travelled a distance of at most $\rho s$ from their previous locations, and thus they are still at distance more than $2\rho s-\rho s=\rho s\ge \rho$ from $P_i$ during all the moves, unable to catch the robber. Moreover the cops were at distance at least $(3\rho+1)s$ from $R'$, by the definition of a super safe room. It follows that after $\rho$ steps the cops are still at distance $(3\rho+1)s-\rho s=(2\rho+1)s$ from $R'$, so $R'$ remains safe. The robber can thus stay there until it is not safe anymore, and then repeat the strategy above, evading the cops forever while remaining in $B$ the whole time.
\end{proof}

\section{Alternative games}\label{sec:div}

\subsection{The divergence game}

In most of our proofs, when we provide a winning strategy for the robber, he does much more than coming back to a given ball of bounded radius infinitely often: he stays in the ball forever. This motivates the following variant of the game, which we call the \emph{divergence game}. The setting is the same as in the weak and strong versions of the cops and robber game studied in this paper, except that the goal of the cops is now to make sure that the limit inferior of the distance between some fixed vertex $v$ of $G$ and the location of the robber is infinite. Equivalently, the goal of the robber is to stay within some ball of bounded radius forever.

With the strong and weak versions, this defines two new cop numbers, which we call $\wdi(G)$ for the weak version and $\sdi(G)$ for the strong version. Note that this game is harder for the robber and thus for any graph $G$ we have $\wdi(G)\le \wco(G)$ and $\sdi(G)\le \sco(G)$.

\medskip

A mild variant of the proof of Theorem \ref{thm:qi} shows the following.

\begin{theorem}\label{thm:qidiv}
Assume that a connected graph $G$ has a quasi-isometric embedding into a connected graph $H$. Then $\wdi(G)\le \wdi(H)$ and $\sdi(G)\le \sdi(H)$. In particular the two parameters are invariant under quasi-isometry.
\end{theorem}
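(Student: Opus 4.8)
The plan is to follow the proof of Theorem~\ref{thm:qi} essentially word for word, changing only its final paragraph, where the robber's objective is invoked. So I would first fix a quasi-isometric embedding $h\colon V(G)\to V(H)$ with constant $C\ge 1$, set $U=h(V(G))$ and $U^+=N_H^C[U]$, and for each cop $c$ in $H$ construct the virtual cop $c_U$ moving in $U^+$ under rules \cref{r1} and \cref{r2}, and then the virtual cop $c_G$ in $G$, exactly as there. Claims~\ref{C:moveShadow}, \ref{C:SafeStates} and \ref{C:move} only concern the translation of cop moves and of reachability between $H$ and $G$ and make no reference to the robber's goal, so I would reuse them verbatim, together with the same synchronisation between the real game played in $H$ and the virtual game played in $G$, and the same announcement of speeds ($s_r$ in $G$, $s^*=5C^3 s_r$ in $H$) in the strong (resp.\ weak) order; the only formal change is that no radius $R$ needs to be announced in the divergence game.

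The only genuinely new point will be the translation of the divergence objective. I would start from a winning strategy for the robber in the divergence game in $G$ against $t$ cops of speed $s_G$ and reach $\rho_G$ (the parameters derived from the $H$-parameters as in the proof of Theorem~\ref{thm:qi}): by definition this gives $v_G\in V(G)$ and $R_G\in \mathbb{N}$ such that the robber can stay forever in $B_G:=N_G^{R_G}[v_G]$ while evading the cops $c_G$. Fix $b_0\in B_G$. Whenever the robber is virtually on $x\in B_G$ in $G$ he is really on $h(x)$ in $H$, and since $d_G(x,b_0)\le 2R_G$ we have $d_H(h(x),h(b_0))\le 2CR_G+C$, so the endpoints of all his moves in $H$ lie in $N_H^{2CR_G+C}[h(b_0)]$; moreover, by Claim~\ref{C:moveShadow}, a move from $h(x)$ to $h(y)$ with $x,y\in B_G$ is realised along a path of $U^+$ of length at most $5C^3 s_r$, a constant fixed once $s_r$ is chosen. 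Hence at every step of the game in $H$ the robber stays within distance $2CR_G+C+5C^3 s_r$ of $h(b_0)$, so he never leaves a fixed ball of bounded radius in $H$ and wins the divergence game in $H$. Carrying this out in the strong (resp.\ weak) order of moves gives $\sdi(G)\le \sdi(H)$ (resp.\ $\wdi(G)\le \wdi(H)$), and the invariance under quasi-isometry then follows at once, since quasi-isometric graphs embed quasi-isometrically into one another.

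I expect the main (indeed essentially only) obstacle to be exactly the boundedness of the robber's intermediate positions within a single round of the $H$-game: in the proof of Theorem~\ref{thm:qi} it sufficed that the robber returns to $h(B_G)$ infinitely often, whereas now one needs the whole trajectory to stay in a fixed ball forever. But this is precisely what Claim~\ref{C:moveShadow} delivers, since the path lengths it bounds depend only on the fixed constant $5C^3 s_r$ and not on the number of rounds played; every other ingredient of the argument is unchanged.
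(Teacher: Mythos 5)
Your proposal is correct and matches the paper's intended argument: the paper gives no separate proof, stating only that a mild variant of the proof of Theorem~\ref{thm:qi} yields the result, and your adaptation—reusing the virtual-cop machinery verbatim and replacing only the final paragraph by the observation that the image of $B_G$ under $h$, fattened by the per-round path length $5C^3 s_r$ from Claim~\ref{C:moveShadow}, confines the robber's entire $H$-trajectory to a fixed ball—is exactly the right way to fill in that variant. The one genuinely new point you identify (bounding the intermediate positions within a round, not just the endpoints) is indeed the only place where the divergence objective requires more than the original argument, and your bound handles it.
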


The proofs of Theorem \ref{thm: hyperbolic} and Corollary \ref{cor: wcn1} (via Theorem \ref{thm: minor}) also show that any graph with $\sdi(G)=1$ is hyperbolic, and that any graph with $\wdi(G)=1$ is quasi-isometric to a tree. On the other hand, by \cite[Theorem 3.2]{lee2023coarse}, every hyperbolic graph $G$ satisfies $\sdi(G)\le \sco(G)\le 1$ and thus $\sdi(G)=1$. Similarly, every graph which is quasi-isometric to a tree satisfies $\wdi(G)\le \wco(G)\le 1$ and thus $\wdi(G)=1$.
This shows the following.

\begin{theorem}\label{thm:div1}
For any graph $G$, $\sdi(G)=1$ if and only if $\sco(G)=1$, and $\wdi(G)=1$ if and only if $\wco(G)=1$.
\end{theorem}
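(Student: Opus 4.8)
The plan is to prove the two equivalences separately but by the same scheme, noting that each is an ``only if''/``if'' pair whose ``if'' direction is already essentially in the literature. Consider first the strong case. For the direction $\sdi(G)=1 \Rightarrow \sco(G)=1$: by definition the divergence game is harder for the robber than the original game, so a single cop winning the divergence game also wins the original game; hence $\sco(G)\le \sdi(G)=1$, and since $\sco(G)\ge 1$ always, $\sco(G)=1$. Wait --- this is the wrong direction: being harder for the robber means $\sdi(G)\le\sco(G)$, so $\sdi(G)=1$ only gives $\sco(G)\ge 1$, which is vacuous. So for this direction I would instead argue the contrapositive through structure: if $\sco(G)\ne 1$, then $\sco(G)\ge 2$, and by Theorem \ref{thm: hyperbolic} $G$ is not hyperbolic; I then want to run the robber strategy from the proof of Theorem \ref{thm: hyperbolic} and observe that in that proof the robber never leaves the cycle $C$, i.e.\ he stays inside a ball of bounded radius forever, which is exactly a winning strategy in the divergence game. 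Hence $\sdi(G)\ge 2$, i.e.\ $\sdi(G)\ne 1$. For the converse direction $\sco(G)=1\Rightarrow \sdi(G)=1$: by Theorem \ref{thm: hyperbolic}, $\sco(G)=1$ means $G$ is hyperbolic, and by \cite[Theorem 3.2]{lee2023coarse} $\sco(G)\le 1$; since $\sdi(G)\le \sco(G)$ we get $\sdi(G)\le 1$, and as $\sdi(G)\ge 1$ always (one cop is necessary because the robber can just sit still if there are no cops), $\sdi(G)=1$.

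The weak case is entirely parallel. For $\wco(G)=1\Rightarrow \wdi(G)=1$: by Corollary \ref{cor: wcn1}, $\wco(G)=1$ implies $G$ is quasi-isometric to a tree; trees satisfy $\wdi\le \wco\le 1$ and $\wdi$ is a quasi-isometry invariant by Theorem \ref{thm:qidiv}, so $\wdi(G)=1$ as well. For the contrapositive of the other direction, if $\wco(G)\ne 1$ then by Corollary \ref{cor: wcn1} $G$ is not quasi-isometric to a tree, hence by \cite[Theorem 3.1]{GP23} it contains $K_3$ as an asymptotic minor; I then invoke the proof of Theorem \ref{thm: minor} (applied with $H=K_3$, whose treewidth is $2$) and observe that the robber strategy constructed there keeps the robber inside the set $C$ forever --- and $C$ is contained in a ball of bounded radius --- so it is a winning strategy in the (weak) divergence game against one cop. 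Therefore $\wdi(G)\ge 2$, i.e.\ $\wdi(G)\ne 1$.

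The only genuinely non-routine point is the observation, in both contrapositive directions, that the robber strategies already written down in the proofs of Theorem \ref{thm: hyperbolic} and Theorem \ref{thm: minor} actually confine the robber to a bounded ball rather than merely forcing infinitely many returns. In the hyperbolicity proof this is transparent: the robber starts on the cycle $C$ and at every stage remains on $C$, so he stays in the ball of radius $R$ used to define the game. In the asymptotic-minor proof it is the content of the invariant $(*)$, which asserts the robber stays in $G[C]$ at every step and $C$ lies within a ball of bounded radius. So in each case one just has to remark that ``the robber visits a bounded ball infinitely often'' is witnessed there by the stronger statement ``the robber never leaves a bounded ball'', which is precisely the winning condition of the divergence game. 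I expect the whole argument to be short, essentially a paragraph, once these observations are made; the only mild care needed is to phrase the lower bounds $\sdi(G)\ge 1$ and $\wdi(G)\ge 1$ correctly (the robber can evade $0$ cops trivially by not moving, staying in any fixed ball, so the cops genuinely need at least one cop).
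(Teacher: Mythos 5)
Your proposal is correct and follows essentially the same route as the paper: both directions are reduced, via Theorem~\ref{thm: hyperbolic} and Corollary~\ref{cor: wcn1} (through Theorem~\ref{thm: minor}), to the observation that the robber strategies in those proofs confine the robber to the cycle $C$ (respectively to $G[C]$, by the invariant $(*)$) forever, while the converse directions follow from $\sdi(G)\le\sco(G)$ and $\wdi(G)\le\wco(G)$ together with the characterizations by hyperbolicity and quasi-trees. The false start you flagged and corrected yourself does not affect the final argument.
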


A natural question is whether this holds more generally: is it true  that $\sdi(G)=\sco(G)$ and $\wdi(G)=\wco(G)$ for any graph $G$? Is it true  that $\sdi(\Gamma)=\sco(\Gamma)$ and $\wdi(\Gamma)=\wco(\Gamma)$ for any group $\Gamma$?

\subsection{Alternative versions of the weak and strong games}
\label{sec: alt}

Finally we go back to the connection between the weak version of the original game and treewidth. Recall that 
Proposition \ref{prop: tw-ub} implies that if $G$ is a locally finite connected graph, then $\wco(G)\leq \tw(G)+1$. On the other hand, Proposition \ref{ex: tw2-wcop-infty} provides an example of a graph $G$ with a single vertex of infinite degree, such that $G$ has treewidth 2 but infinite weak cop number. This shows that the assumption that graphs are locally finite is crucial in Proposition \ref{prop: tw-ub}.

\medskip

Consider the following alternative definition of the weak and strong cop numbers of a graph $G$: instead of preventing the robber to come back infinitely often in some ball of finite radius in $G$, we require that that they prevent the robber from coming back infinitely often in some finite subgraph of $G$. Let $\wco'(G)$ and $\sco'(G)$ denote the two corresponding cop numbers. Note that these games are harder for the robber, and thus $\wco'(G)\le \wco(G)$ and $\sco'(G)\le \sco(G)$ for any graph $G$. On the other hand, the games are precisely the same as the original ones in locally finite graphs, where a connected subgraph has bounded radius if and only if it is finite. It follows that for any locally finite graph $G$, $\wco'(G)= \wco(G)$ and $\sco'(G)= \sco(G)$. 

\medskip

It is not difficult to modify the proof of Proposition \ref{prop: tw-ub} to show that if $G$ is a  connected graph (not necessarily locally finite), then $\wco'(G)\leq \tw(G)+1$. Proposition \ref{ex: tw2-wcop-infty} thus provides an example of a graph $G$ with $\wco(G)=\sco(G)=\infty$ but $\sco'(G)\le\wco'(G)\le 3$. 
We also observe that the proof of Theorem \ref{thm: minor} shows that if $H$ is a finite asymptotic minor of a graph $G$, then $\wco'(G)\geq \tw(H)$. Our main motivation to introduce these alternative games is that they allow to ask Questions \ref{q: minors} and \ref{q: Qi-tw} on general infinite graphs (which are not necessarily locally finite). 

\begin{question}
 \label{q: minors'}
 Is it true that if a  connected graph $G$ excludes some finite planar graph $H$ 
 as an asymptotic minor, then
 $\wco'(G)< \infty$?   If so, does there exist some 
 function $f:\mathbb N\to \mathbb N$ such that \[\wco'(G)\leq f\big(\max\sg{\tw(H): |H|<\infty~\text{and}~H\preccurlyeq_{\infty} G}\big)?\] Can we choose $f=\mathrm{id}_{\mathbb N}$? 
\end{question}

\begin{question}
 \label{q: Qi-tw'}
 Is it true that if a graph $G$ is such that $\wco'(G)<\infty$,  then it is quasi-isometric to some graph of finite treewidth? If so, is it quasi-isometric to some graph of treewidth at most $f(\wco'(G))$ for some $f:\mathbb N\to \mathbb N$? Can we choose $f(k)=k+1$? 
\end{question}

Finally, we observe that  the proof of Theorem \ref{thm:qi} can be slightly modified to show the following.

\begin{theorem}\label{thm:qidiv'}
Assume that a connected graph $G$ has a quasi-isometric embedding into a connected graph $H$. Then $\wco'(G)\le \wco'(H)$ and $\sco'(G)\le \sco'(H)$. In particular the two parameters are invariant under quasi-isometry.
\end{theorem}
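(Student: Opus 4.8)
The plan is to rerun the proof of Theorem \ref{thm:qi} essentially verbatim, changing only the last step, where the winning condition is checked. Recall that in that proof the robber takes a winning strategy in $G$ and lifts it to a winning strategy in $H$ through the quasi-isometric embedding $h\colon V(G)\to V(H)$: each cop $c$ in $H$ is tracked by a virtual cop $c_U$ moving in $U^+=N_H^C[h(V(G))]$ and ending every round in $U=h(V(G))$ (this is \cref{r1}, \cref{r2} and Claims \ref{C:moveShadow}, \ref{C:SafeStates}, \ref{C:move}), which is in turn tracked by a virtual cop $c_G$ in $G$; the robber then plays his $G$-strategy against the cops $c_G$, and whenever he is virtually located on a vertex $x\in V(G)$ he is really located on $h(x)\in V(H)$ at the end of the corresponding round. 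All the claims and all the choices of parameters ($s_U$, $s_G$, $\rho_U$, $\rho_G$, and $s^*=5C^3 s_r$) carry over unchanged; in particular the robber in $H$ stays at distance more than $\rho$ from the cops in $H$ throughout, so he is never captured.

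The only modification concerns what the robber's strategy in $G$ achieves. In the primed game, a winning strategy for the robber in $G$ guarantees not that he returns infinitely often to a ball of bounded radius, but that he returns infinitely often to some \emph{finite} subgraph $F_G$ of $G$. So, once the parameters have been fixed in the appropriate order (for the weak version $s,\rho$ are announced before the robber's speed, for the strong version $s$ is announced, then the robber's speed, then $\rho$ — exactly as in the proof of Theorem \ref{thm:qi}), the robber selects such an $F_G$ in place of the ball $B_G$. Since $h$ is a map and $V(F_G)$ is finite, $h(V(F_G))$ is a finite subset of $V(H)$. At the end of every round in which the robber is virtually inside $F_G$ in $G$, he is really located on a vertex of $h(V(F_G))$ in $H$, and this occurs for infinitely many rounds; hence the robber's trajectory in $H$ visits the finite vertex set $h(V(F_G))$ infinitely often, without ever being captured. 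Therefore the robber wins the primed game in $H$ against $t$ cops, which gives $\wco'(G)\le\wco'(H)$ and $\sco'(G)\le\sco'(H)$.

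For the ``in particular'' statement: if $G$ and $H$ are quasi-isometric, then $G$ has a quasi-isometric embedding into $H$ and vice versa, so applying the inequalities just proved in both directions yields $\wco'(G)=\wco'(H)$ and $\sco'(G)=\sco'(H)$.

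There is no genuinely new obstacle here: the entire difficulty, namely simulating cops that wander far from $h(V(G))$, was already dealt with in the proof of Theorem \ref{thm:qi}. The only points to be careful about are that $h$ sends finite sets to finite sets (this replaces the fact, used in the original argument, that $h$ sends balls of bounded radius in $G$ to sets of bounded radius in $H$), and that the robber in $H$ genuinely sits on $h(V(F_G))$ — rather than merely within distance $C$ of it — at the rounds that matter; both are immediate from the construction recalled above.
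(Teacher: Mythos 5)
Your proposal is correct and matches the paper's own argument: the paper likewise proves Theorem \ref{thm:qidiv'} by rerunning the proof of Theorem \ref{thm:qi} and observing that the image under $h$ of the finite subgraph the robber revisits in $G$ is a finite vertex set in $H$ (extendable to a finite subgraph by adding finitely many finite paths), visited infinitely often at end-of-round positions. Your additional care about the parameter announcement order and about the robber genuinely sitting on $h(V(F_G))$ rather than near it is consistent with, and slightly more explicit than, the paper's brief remark.
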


To see this, observe that when the robber moves in $G$ in some finite subgraph $X$, the image of the robber in $H$ under the quasi-isometry $h$ moves in the image $h(X)$ of $X$  in $H$, and this is a finite subgraph (which can be obtained by adding a finite number of finite paths between the elements of $h(X)$ in $H$).

\subsection*{Acknowledgement} We thank Raphael Appenzeller for the helpful comments, and two anonymous referees for their suggestions.

\bibliographystyle{alpha}
\bibliography{biblio}

\end{document}